\tikzstyle arrowstyle=[scale=1]
\tikzstyle directed=[postaction={decorate,decoration={markings,
    mark=at position .55 with {\arrow[arrowstyle]{stealth}}}}]
\tikzstyle ddirected=[postaction={decorate,decoration={markings,
    mark=at position .45 with {\arrow[arrowstyle]{stealth}},
    mark=at position .55 with {\arrow[arrowstyle]{stealth}}}}]
\tikzstyle reverse directed=[postaction={decorate,decoration={markings,
    mark=at position .55 with {\arrowreversed[arrowstyle]{stealth};}}}]
\tikzstyle reverse ddirected=[postaction={decorate,decoration={markings,
    mark=at position .55 with {\arrowreversed[arrowstyle]{stealth};},
    mark=at position .65 with {\arrowreversed[arrowstyle]{stealth};}}}]
\theoremstyle{plain}
\newtheorem{para}{}[section]
\newtheorem{thm}[para]{Theorem}
\newtheorem{prop}[para]{Proposition}
\newtheorem{lemma}[para]{Lemma}
\newtheorem{cor}[para]{Corollary}
\newtheorem*{thm1}{Theorem}
\theoremstyle{remark}
\newtheorem*{claim}{Claim}
\newtheorem*{remark}{Remark}
\theoremstyle{definition}
\newtheorem{dfn}{Definition}
\newtheorem{example}[para]{Example}
\newcommand{\co}{\colon\thinspace}
\newcommand{\C}{\mathbb{C}}
\newcommand{\Q}{\mathbb{Q}}
\newcommand{\Z}{\mathbb{Z}}
\newcommand{\calb}{\mathcal{B}}
\newcommand{\calc}{\mathcal{C}}
\newcommand{\calh}{\mathcal{H}}
\newcommand\be{\mathbf{e}}
\newcommand\bp{\mathbf{p}}
\newcommand\bv{\mathbf{v}}
\newcommand\bx{\mathbf{x}}
\newcommand\by{\mathbf{y}}
\newcommand{\innp}[1]{\left< #1 \right>}
\newcommand{\norm}[1]{\left\vert \left\vert #1\right\vert\right\vert}
\DeclareMathOperator{\SL}{SL} \DeclareMathOperator{\PSL}{PSL}
\DeclareMathOperator{\GL}{GL} 
 \DeclareMathOperator{\SO}{SO}
 \DeclareMathOperator{\Mat}{Mat}
\DeclareMathOperator{\lcm}{lcm}
\DeclareMathOperator{\Gal}{Gal}
\DeclareMathOperator{\Nr}{\mathrm{Nrm}}
\DeclareMathOperator{\vol}{vol}
\DeclareMathOperator{\Id}{Id}
\DeclareMathOperator{\denom}{\mathrm{denom}}
\DeclareMathOperator{\R}{\mathbb{R}}\DeclareMathOperator{\Ram}{\textrm{Ram}}
\DeclareMathOperator{\disc}{disc}\DeclareMathOperator{\cvol}{\mathrm{covol}}
\begin{document}

\title[Virtual and residual properties]{Effective virtual and residual properties \\  of some arithmetic hyperbolic $3$--manifolds}
\author[J. DeBlois]{Jason DeBlois}
\address{Department of Mathematics\\University of Pittsburgh\\Pittsburgh, PA 15260}
\email{jdeblois@pitt.edu}
\author[N. Miller]{Nicholas Miller}
\address{Department of Mathematics\\Indiana University\\Bloomington, IN 47405}
\email{nimimill@iu.edu}
\author[P. Patel]{Priyam Patel}
\address{Department of Mathematics\\University of Utah\\Salt Lake City, UT 84111}
\email{patelp@math.utah.edu}

\begin{abstract}  We give an effective upper bound, for certain arithmetic hyperbolic 3--manifold groups obtained from a quadratic form construction, on the minimal index of a subgroup that embeds in a fixed 6--dimensional right-angled reflection group, stabilizing a totally geodesic subspace. In particular, for manifold groups in any fixed commensurability class we show that the index of such a subgroup is asymptotically smaller than any fractional power of the volume of the manifold. We also give effective bounds on the geodesic residual finiteness growths of closed hyperbolic manifolds that totally geodesically immerse in non-compact right-angled reflection orbifolds, extending work of the third author from the compact case. The first result gives examples to which the second applies, and for these we give explicit bounds on geodesic residual finiteness growth.\end{abstract}
\maketitle

Mal'cev \cite{Malcev} proved that a finitely generated linear group $G$ is \textit{residually finite}: for any non-identity element $g\in G$ there is a finite-index subgroup $H\leq G$ such that $g\notin H$. It follows that the fundamental groups of finite-volume hyperbolic manifolds are residually finite. This property and generalizations such as locally extended residually finite (LERF) have emerged as important tools in the topological study of hyperbolic manifolds, see eg.~\cite{Scott}.  Residual properties are closely tied to virtual ones --- those possessed by covers of finite degree, or, at the level of fundamental group, subgroups of finite index. Recent work of Agol \cite{Agol_vHaken} simultaneously proves the ``virtual conjectures'' \cite[Qns 16--18]{Thurston} for all hyperbolic $3$--manifolds and establishes that their fundamental groups are LERF \cite[Qn~15]{Thurston}.

The full resolution of these conjectures relies on various results, including the surface subgroup theorem \cite{Kahn_Mark}, Canary's covering theorem \cite{Canary_covering}, the tameness theorem \cite{Agol_tameness, CG_tameness}, and Agol's RFRS condition \cite{Agol_RFRS}, to name a few. Building upon the work of Wise and his collaborators initiated in \cite{HagWi_special}, Agol proves the final crucial component in \cite{Agol_vHaken}, showing that the fundamental groups of hyperbolic 3--manifolds are \emph{virtually special}. A group is called \emph{special} if it embeds in a right-angled Coxeter group (\textit{$\mathrm{C}$-special}) or a right-angled Artin group (\textit{$\mathrm{A}$-special}), and is called \emph{virtually special} if it has a finite index special subgroup. Every right-angled Artin group is a finite index subgroup of some right-angled Coxeter group \cite{Davis_Jan} and so every virtually special group is virtually C-special. In this paper, we will focus on C-specialness. 

For a virtually special group, the extensive combinatorial machinery for Coxeter and Artin groups may be brought to bear to establish many desirable properties of its finite-index special subgroups. For example, an important implication of a hyperbolic $3$--manifold group $\pi_1 M$ being virtually C-special is that the finite degree cover of the manifold corresponding to the special subgroup is in fact Haken. However, the virtually special machine does not currently offer an \textit{effective} means for bounding the index of a special subgroup of $\pi_1 M$.  Our first main result provides such a bound for a class of arithmetic hyperbolic $3$--manifolds, and consequently quantifies the virtually Haken property for these manifolds.  This class of groups contains another class for which the LERF property has been known, by work of Agol--Long--Reid \cite{ALR}, for almost two decades.

We now briefly describe this class. We refer the reader to \S 2 for any requisite background material and terminology used in the introduction. Suppose $q$ is a symmetric bilinear form with coefficients in $\mathbb{Q}$, of signature $(3,1)$. Then there is  $P \in \GL(4,\mathbb{R})$ with coefficients in $\R$ such that $P q P^t$ is the diagonal form $\innp{1,1,1,-1}$.  The matrix $P$ conjugates $\mathrm{SO}^+(q,\mathbb{R})$ to the orientation preserving isometry group $\mathrm{Isom}^+(\mathbb{H}^3)$ of $\mathbb{H}^3$ in the hyperboloid model, and conjugates $\mathrm{SO}^+(q,\mathbb{Z})$ to a lattice subgroup of $\mathrm{Isom}^+(\mathbb{H}^3)$.  We will refer to any group of hyperbolic isometries that shares a finite-index subgroup with this conjugate as an arithmetic lattice commensurable with $\mathrm{SO}^+(q,\mathbb{Z})$.

Given a right-angled polyhedron $P\subset\mathbb{H}^n$, let $\Gamma_P$ denote the group of isometries generated by reflections in the sides of $P$; $\Gamma_P$ is a right-angled Coxeter group. Below we are particularly interested in the polyhedron $P_6$. Recall that $\mathrm{SO}^+(6,1;\mathbb{Z})$ is the group generated by reflections in the sides of a simplex $\sigma$ in $\mathbb{H}^6$ that has one ideal vertex, which is itself a fundamental domain for the symmetry group of the right-angled ideal polyhedron $P_6\subset\mathbb{H}^6$ (the Coxeter diagram for $\sigma$ is given in Figure~\ref{coxeter}).  In fact, $P_6$ is the union of the translates of $\sigma$ by the spherical reflection group, of order $2^7 3^4 5$, generated by reflections in its sides containing a finite vertex.  See Lemma 3.4 of \cite{ALR} and its proof.
Then, we have the following theorem.

\newcommand\EmbeddingProp{Let $\Gamma$ be an arithmetic lattice that is commensurable with $\SO^+(q,\Z)$ for some $\Q$--defined bilinear form $q$ of signature $(3,1)$. Then for any $\epsilon>0$, there exist constants $C_\epsilon$ and $D$, where $C_\epsilon$ depends only on $\epsilon$ and the commensurability class of $\Gamma$ and $D$ depends only on the commensurability class of $\Gamma$, such that $\Gamma$ has a subgroup $\Delta$ of index at most $C_\epsilon~\! D~\! \mathrm{covol}(\Gamma)^{\epsilon}$ and an injective homomorphism from $\Delta$ to a subgroup of $\SO^+(6,1;\Z)$ that stabilizes a four dimensional time-like subspace of $\mathbb{R}^{6,1}$.

Thus for an arithmetic hyperbolic manifold $M = \mathbb{H}^3/\Gamma$, where $\Gamma$ is such a lattice, there is a cover $\tilde{M}\to M$ of degree at most $(2^73^45~\!C_\epsilon~\! D)\mathrm{vol}(M)^{\epsilon}$ with a totally geodesic immersion to $\mathbb{H}^6/\Gamma_{P_6}$, for a right-angled polyhedron $P_6$.}
\theoremstyle{plain}
\newtheorem*{embedding_prop}{Theorem \ref{Sec:ArEx:T1}}
\begin{embedding_prop}\EmbeddingProp\end{embedding_prop}

%

In the course of establishing Theorem \ref{Sec:ArEx:T1}, we make effective the strategy exploited by Agol--Long--Reid in \cite{ALR}.  Its main idea is to take the direct sum of the quadratic form $q$ associated to $\Gamma$ with a carefully chosen complimentary form over $\mathbb{R}^3$, producing a 7--dimensional form which is conjugate over $\mathbb{Q}$ to the standard form of signature $(6,1)$ (see Subsection \ref{sec:quadconj} for details).

The non-compact manifold groups covered by Theorem \ref{Sec:ArEx:T1} are precisely those commensurable with the Bianchi groups $\mathrm{PSL}(2,\mathcal{O}_d)$, and for each such group the special subgroup we produce lies in its intersection with $\mathrm{PSL}(2,\mathcal{O}_d)$. Here we have switched to the upper half-space model for $\mathbb{H}^3$ and its orientation preserving isometry group $\mathrm{PSL}(2,\mathbb{C})$. Special subgroups of Bianchi groups were produced recently by Chu \cite{Chu}, and some of our results overlap with hers.  In comparing Theorem \ref{Sec:ArEx:T1} with the main result of \cite{Chu}, it is first important to note that Chu's result provides bounds which are both uniform over all $d$ (ours are not) and, for any particular $d$, are several orders of magnitude smaller than the bounds we produce.

However, Theorem \ref{Sec:ArEx:T1} has the benefit of applying to the entire commensurability class of $\mathrm{PSL}(2,\mathcal{O}_d)$ as opposed to just its finite index subgroups.
This results in the addition of a term depending on the commensurability class as well as a term involving volume.
The latter dependence is necessary from the naive observation that each commensurability class of arithmetic Kleinian groups has infinitely many non-conjugate maximal arithmetic lattices $\Gamma_i$ (whose volumes $V_i$ tend to infinity) and therefore the index of $\Gamma_i\cap\PSL(2,\mathcal{O}_d)$ in $\PSL(2,\mathcal{O}_d)$ must tend to infinity as well.
Granting this, we can achieve growth slower than any fractional power of volume asymptotically, which is the best one can hope for using our methods. 
It is possible that a completely different method can remove the dependence on either of these quantities but we do not take up that matter presently.

It is worth mentioning that the confluence of Theorem \ref{prop:deltaconstruct} and \cite[Thm~1.2]{Chu} imply the following:

\newtheorem*{larry}{Corollary \ref{Larry}}
\newcommand\LarryCor{For each square-free $d\in\mathbb{N}$ there is an effectively computable constant $C_\epsilon = C(\epsilon,d)$ such that for any lattice $\Gamma\subset\mathrm{PSL}(2,\mathbb{C})$ that is commensurable with $\mathrm{PSL}(2,\mathcal{O}_d)$, where $\mathcal{O}_d$ is the ring of integers of $\mathbb{Q}(\sqrt{-d})$, $\Gamma$ has a special subgroup $\Delta$ of index at most $120\,C_\epsilon\,\mathrm{covol}(\Gamma)^\epsilon$.}
\begin{larry}\LarryCor\end{larry}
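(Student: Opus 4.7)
The plan is to combine Theorem~\ref{prop:deltaconstruct} with \cite[Thm~1.2]{Chu} in series: first pass to a finite-index subgroup of $\Gamma$ that injects into $\PSL(2,\mathcal{O}_d)$, and then intersect with Chu's explicit C-special subgroup thereof.

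First, I apply Theorem~\ref{prop:deltaconstruct} to $\Gamma$, which by hypothesis is commensurable with $\PSL(2,\mathcal{O}_d)$, and hence with $\SO^+(q,\Z)$ for some $\Q$-defined form $q$ of signature $(3,1)$ arising from $\Q(\sqrt{-d})$. The theorem produces a subgroup $\Delta_0\leq\Gamma$ of index at most $C'_\epsilon\,D\,\mathrm{covol}(\Gamma)^\epsilon$ together with an injection from $\Delta_0$ into a subgroup of $\SO^+(6,1;\Z)$ stabilizing a time-like $4$-dimensional subspace $V\subset\R^{6,1}$. By the quadratic-form construction outlined after Theorem~\ref{Sec:ArEx:T1}, $V$ is chosen so that the restriction of the ambient $(6,1)$-form is equivalent to $q$; hence the stabilizer of $V$ in $\SO^+(6,1;\Z)$ projects, modulo the compact orthogonal factor on $V^\perp$, onto a lattice commensurable with $\SO^+(q,\Z)$. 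Passing through the exceptional isomorphism $\SO^+(3,1)\cong\PSL(2,\C)$, this projection can be identified with a subgroup of the Bianchi group $\PSL(2,\mathcal{O}_d)$, so we may view $\Delta_0$ as sitting inside $\PSL(2,\mathcal{O}_d)$.

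Next, I invoke \cite[Thm~1.2]{Chu}, which supplies a C-special subgroup $\Lambda_d\leq\PSL(2,\mathcal{O}_d)$ of index at most $120$. Setting $\Delta=\Delta_0\cap\Lambda_d$ inside $\PSL(2,\mathcal{O}_d)$, we have $[\Delta_0:\Delta]\leq 120$, and $\Delta$ is itself C-special as a subgroup of the C-special group $\Lambda_d$. Multiplying indices,
\[
  [\Gamma:\Delta]=[\Gamma:\Delta_0]\,[\Delta_0:\Delta]\leq 120\,C'_\epsilon\,D\,\mathrm{covol}(\Gamma)^\epsilon.
\]
Both $C'_\epsilon$ and $D$ depend only on $\epsilon$ and the commensurability class of $\Gamma$, which for lattices commensurable with $\PSL(2,\mathcal{O}_d)$ is determined by $d$; absorbing $D$ into $C_\epsilon=C(\epsilon,d)$ yields the claimed bound.

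The main obstacle I anticipate is the identification step above --- namely, verifying that the image of $\Delta_0$ lands inside $\PSL(2,\mathcal{O}_d)$ on the nose, rather than merely inside some commensurable lattice, so that Chu's theorem applies directly with the $120$ coefficient. This amounts to careful bookkeeping inside the construction of Theorem~\ref{prop:deltaconstruct}: the ambient $(6,1)$-form is arranged as an orthogonal sum $q\oplus q'$ for a positive-definite $q'$ over $\R^3$, and $\SO^+(q,\Z)$ then sits in $\SO^+(6,1;\Z)$ as the $V$-stabilizer. Any residual discrepancy between this image and $\PSL(2,\mathcal{O}_d)$ itself would introduce a further $d$-dependent finite index that could be absorbed into $C(\epsilon,d)$ without altering either the coefficient $120$ or the $\mathrm{covol}(\Gamma)^\epsilon$ asymptotics.
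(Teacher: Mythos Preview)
Your high-level plan --- pass to a finite-index subgroup of $\Gamma$ inside $\PSL(2,\mathcal{O}_d)$ and then intersect with Chu's index-$\leq 120$ special subgroup --- is exactly the paper's. But you have misread Theorem~\ref{prop:deltaconstruct}. Its conclusion is a subgroup $\Delta\leq\Gamma$ of index at most $C_\epsilon V^\epsilon$ (with \emph{no} factor $D$) together with $g\in\GL(4,\Q)$ such that $g\Delta g^{-1}<\SO^+(q',\Z)$ for a four-dimensional form $q'$. There is no $\SO^+(6,1;\Z)$ in its statement: the passage to dimension seven and the constant $D$ come only from Proposition~\ref{prop:conjbound}, applied afterwards in the proof of Theorem~\ref{Sec:ArEx:T1}.

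The whole point of Corollary~\ref{Larry}, as the paper states explicitly, is that Chu's theorem lets one \emph{circumvent} Proposition~\ref{prop:conjbound} and hence the factor $D$. Your route through $\SO^+(6,1;\Z)$ followed by projection back to the $V$-factor is circular: you arrive back inside $\SO^+(q',\Z)$, where Theorem~\ref{prop:deltaconstruct} had placed you to begin with, only now at a subgroup of further index $D$. The resulting bound is still of the asserted shape (since $D$ depends only on $d$ and can be absorbed), but the detour contributes nothing.

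The ``obstacle'' you flag --- whether the subgroup lands in $\PSL(2,\mathcal{O}_d)$ on the nose --- is in fact already handled inside the proof of Theorem~\ref{prop:deltaconstruct}. The subgroup $\Delta$ is built there, via the isomorphism $\Phi$ and Corollary~\ref{cor:indexbound3}, from $\Lambda_h\cap P((\mathcal{O}^{\mathrm{std}})^1)$; in the non-cocompact case the quaternion algebra is $\mathrm{M}(2,k)$, the standard order is $\mathrm{M}(2,\mathcal{O}_d)$, and so $P((\mathcal{O}^{\mathrm{std}})^1)=\PSL(2,\mathcal{O}_d)$ exactly. Chu's bound of $120$ then applies directly, with no residual $d$-dependent index to absorb.
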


Theorem \ref{Sec:ArEx:T1} also covers a wider class of compact arithmetic hyperbolic $3$--manifolds.  Section 5 of \cite{Chu} covers the compact arithmetic manifolds associated to $\mathrm{SO}^+(q,\mathbb{Z})$, where $q$ is the quadratic form
\[ q(x_1,x_2,x_3,x_4) =  x_1^2+ x_2^2+x_3^2 - m\,x_4^2, \]
for a prime $m$ congruent to $-1$ modulo $8$. Our result covers infinitely more commensurability classes of compact arithmetic hyperbolic $3$--manifolds, and the constant $D$ of Theorem \ref{Sec:ArEx:T1} emerges from dealing with the larger class of forms $q$ one encounters in obtaining this generalization.
For instance, the $5/1$ Dehn filling of the census manifold $m306$ is a closed, arithmetic hyperbolic manifold for which we can now give an explicit upper bound on the index of a special subgroup (see Example \ref{ex:m306}).

Our second main result, Theorem \ref{red sauce}, extends results of Patel \cite{Patel} that give explicit linear bounds on the \textit{geodesic residual finiteness growths} of certain hyperbolic $3$-- and $4$--manifold groups.  The study of this invariant and its relation to the existing literature on residual finiteness growth is well introduced in \cite[\S 1]{Patel}.  For now we will let it suffice to record that work of Bou-Rabee--Hagen--Patel \cite{BRHP} implies that for every closed hyperbolic $3$--manifold $M$, the geodesic residual finiteness growth of $\pi_1 M$ is \textit{at most linear}: there is a constant $K$ such that for each loxodromic element $\alpha$ of $\pi_1 M$ there exists $H<\pi_1 M$ with $\alpha\notin H$ and\begin{align}\label{haha}
	[\pi_1 M : H] \leq K\,\ell(\alpha), \end{align}
where $\ell(\alpha)$ is the translation length of $\alpha$ (i.e. the length of the geodesic in $M$ representing $\alpha$). For a more detailed account of this implication, see Section \ref{geo vs rf}.

We are interested in obtaining explicit values for $K$, for given manifolds $M$. For instance when $M$ is a closed manifold that admits a totally geodesic immersion to a compact right-angled reflection orbifold, the main result of \cite{Patel} gives an explicit such value. Theorem \ref{red sauce} still requires $M$ to be closed, but allows it to immerse in a non-compact right-angled reflection orbifold of finite volume, such as $P_6$ above. Note that attempting to obtain explicit constants via the results of \cite{BRHP} presents several difficulties, including the non-effectiveness of the virtual special machinery.

Our bound depends on a choice of embedded horoballs.

\begin{dfn}\label{embedded}  For a polyhedron $P$ and an ideal vertex $v$ of $P$, we will say a horoball centered at $v$ is \textbf{embedded} in $P$ if it does not intersect the interior of any side of $P$ that is not incident on $v$.\end{dfn}

Here and below, the term side of a polyhedron $P$ refers specifically to a codimension-one face of $P$, following Ratcliffe (see \cite{Ratcliffe}, p.~198 and Theorem 6.3.1).

\newcommand\RedSauceThm{For $n\geq 2$, let $P$ be a right-angled polyhedron in $\mathbb{H}^{n+1}$ with finite volume and at least one ideal vertex, let $\Gamma_P$ be the group generated by reflections in the sides of $P$, and let $\calb$ be a collection of horoballs, one for each ideal vertex of $P$, that are each embedded in the sense of Definition \ref{embedded} and pairwise non-overlapping.  For a closed hyperbolic $m$--manifold $M$, $m\leq n$, that admits a totally geodesic immersion to $\mathbb{H}^{n+1}/\Gamma_P$, and any $\alpha\in \pi_1 M - \{ \mathrm{Id}_{\pi_1 M}\}$, there exists a subgroup $H'$ of $\pi_1 M$ such that $\alpha\notin H'$, and the index of $H'$ is bounded above by
$$  \frac{2v_n(1)}{V_{R+h_{\max}}} \sinh^{n}\left(R +d_{R+h_{\max}}\right)  \ell(\alpha), $$
where $v_n(1)$ is the volume of the $n$--dimensional Euclidean unit ball and:\begin{itemize}
	\item $\ell(\alpha)$ is the length of the unique geodesic representative of $\alpha$; 
	\item $R=\ln(\sqrt{n+1}+\sqrt{n})$;
	\item $h_{\max} = \ln(\cosh r_{\max})$, where $r_{\max}$ is the radius of the largest embedded ball in $M$; and 
	\item $d_{R+h_{\max}}$ and $V_{R+h_{\max}}$ are the diameter and volume, respectively, of the $(R+h_{\max})$--neighborhood in $P$ of $\overline{P-\bigcup\{B\in\calb\}}$.\end{itemize}}
\theoremstyle{plain}
\newtheorem*{red_sauce_thm}{Theorem \ref{red sauce}}
\begin{red_sauce_thm}\RedSauceThm\end{red_sauce_thm}

The bound above is the natural extension of \cite[Thm 3.3, Thm 4.3]{Patel}, with the role of the polyhedron $P$ there played here by a ``compact core'': the $(R+h_{\max})$--neighborhood of $\overline{P-\bigcup\{B\in\calb\}}$.  Because $h_{\max}$ appears here, the resulting bound depends not only on $P$ but also on $M$, unlike in \cite{Patel}.  This reflects the fact that we use the radius of the largest embedded ball in $M$ to control its interaction with the thin part of $P$, where the techniques of \cite{Patel} break down.

Theorem \ref{red sauce} applies to a significantly larger class of examples than \cite{Patel}.  In particular, compact right-angled polyhedra exist in $\mathbb{H}^n$ only for $n\leq 4$, whereas Theorem \ref{red sauce} covers the 6--dimensional finite-volume example $P_6$ of Theorem \ref{Sec:ArEx:T1} and other examples up to dimension at least eight (see eg.~\cite{PotVin}).  As Theorem \ref{Sec:ArEx:T1} shows, having more dimensions to work with allows one to produce totally geodesic immersions of more hyperbolic $3$--manifolds.

Given its prominent role in Theorem \ref{Sec:ArEx:T1}, and hence in the application of Theorem \ref{red sauce} to actual examples, we find it useful to look a bit more closely at the 6--dimensional right-angled polyhedron $P_6$ mentioned there.  In Section \ref{constants}, we collect enough geometric data on $P_6$ and the simplex $\sigma$ that generates it to give explicit formulas bounding the constants $d_{R+h_{\max}}$ and $V_{R+h_{\max}}$ appearing in Theorem \ref{red sauce} when $P = P_6$.  Combining this with Theorems \ref{Sec:ArEx:T1} and \ref{red sauce} yields:

\newcommand\EffectiveCor{Let $M = \mathbb{H}^3/\Gamma$ be a closed arithmetic hyperbolic $3$--manifold such that $\Gamma$ is commensurable with $\mathrm{SO}^+(q,\mathbb{Z})$ for some $\mathbb{Q}$--defined form $q$.  For any $\epsilon>0$ and any $\alpha\in\Gamma-\{ \mathrm{Id}_\Gamma\}$, there exists a subgroup $H'$ of $\Gamma$ such that $\alpha\notin H'$, and the index of $H'$ is bounded above by
\[  2^73^45 \cdot C_\epsilon\cdot D\cdot \mathrm{vol}(M)^\epsilon \cdot \frac{v_5(1)}{V_0}\sinh^5\left(2(2R+d_{\max}+\ln p^{-1}(\mathrm{vol}(M)))\right) \ell(\alpha), \]
where $v_5(1) = 8\pi^2/15$ and:\begin{itemize}
\item $\ell(\alpha)$ is the length of the unique geodesic representative of $\alpha$;
\item $R = \ln(\sqrt{6}+\sqrt{7})$;
\item $C_\epsilon\le 2^{\epsilon C'_\epsilon+2}11^2d_k^{A_1\omega(d_k)+3/2}$ with notation as in Equation \eqref{eqn:epsprime}, Proposition \ref{prop:indexbound}, and Proposition \ref{prop:indexbound2};
\item $D\le Ad^{2.975\cdot 10^{13}}$, where $A$ is an absolute, effectively computable constant and $d=z_1z_2z_3z_4$ (see Proposition \ref{prop:conjbound} for notation);
\item $V_0 = \frac{2^{2.5}\pi^3-3^4}{2^{2.5}\cdot5\cdot3}\approx 1.112$ and $d_{\max} = \cosh^{-1}(\sqrt{3})$, see Corollary \ref{able}; and
\item $p(x) = \frac{1}{5}x^5 - \frac{2}{3}x^3 + x - \frac{8}{15}$.\end{itemize}}
\newtheorem*{effective_cor}{Corollary \ref{effective}}
\begin{effective_cor}\EffectiveCor\end{effective_cor}

This formula yields explicit numerical bounds for the geodesic residual finiteness growths of actual examples.  For instance, in Example \ref{m306 taketwo} we build on Example \ref{ex:m306} to give an explicit value for the constant $K$ appearing in Equation \eqref{haha} when $M$ is the $5/1$ Dehn filling of the census manifold m306.  The value given is approximately $7\cdot 10^{150}$, which may well be non-optimal.  But we emphasize that it was produced by an effective process that can produce such a number for any closed manifold $M=\mathbb{H}^3/\Gamma$, where $\Gamma$ is an arithmetic lattice commensurable with $\mathrm{SO}^+(q,\mathbb{Z})$ for a bilinear form $q$ of signature $(3,1)$ with coefficients in $\mathbb{Q}$.

Results of this form depart from the existing literature on \textit{residual finiteness growth} (not modified by ``\textit{geodesic}'') in their degree of precision.  The more general notion introduced by Bou-Rabee, \cite{Bou}, measures the efficiency with which non-identity elements of an arbitrary residually finite group can be excluded from finite-index subgroups in terms of their word lengths. The word length of an element $g$ of a finitely generated group $\Gamma$ must be computed with respect to a finite generating set $X$ for $\Gamma$, and it depends on the choice of this set up to additive/multiplicative constants. The literature on residual finiteness growth thus employs a notion of asymptotic growth that is invariant under change of generating set, and as such all linear functions have the same growth. (We expand on this in Section \ref{geo vs rf}; for more detail see eg.~\cite{Bou} or \cite{BRHP}.)

When $\Gamma=\pi_1 M$ for a finite-volume hyperbolic manifold $M$ of dimension at least three, the geodesic length function $\ell\co \Gamma\to [0,\infty)$ offers a measure of complexity of elements that is an invariant of $\Gamma$, by Mostow rigidity. And when $M$ is closed, it follows from the \v{S}varc--Milnor lemma that the residual finiteness and geodesic residual finiteness functions have the same asymptotic growth rate \cite[Lem 6.1]{Patel}. The geodesic residual finiteness function thus gives a canonical choice for measuring the residual finiteness growth in these cases. Since this growth is at most linear for all closed $M$ \cite{BRHP} (cf.~Section \ref{geo vs rf}), it is natural to seek finer information of the form described in Sections \ref{pplus} and \ref{constants}.

It is not known whether the (geodesic) residual finiteness growth of closed hyperbolic manifolds is \textit{at least} linear. Another unresolved question around our work arises from considering non-compact but finite-volume hyperbolic $n$--manifolds.  For such $M$, the results of \cite{BRHP} still imply that the residual finiteness growth of $\pi_1 M$ is at most linear, but there are currently no upper or lower bounds on the geodesic residual finiteness growth of any such manifold in the literature. For a more detailed discussion suggesting that the residual finiteness and geodesic residual finiteness functions need not have the same growth rate, see \cite[\S 6.2]{Patel}. 

\subsection*{Acknowledgements:}
The authors thank David Ben McReynolds and Jeffrey S. Meyer for their contributions to the genesis of this project. We are greatly indebted to Ben, without whom this paper would not have been possible. His guidance and perspective have enriched many aspects of it. We also thank Benjamin Linowitz for correspondence leading to the effective part of the proof of Proposition \ref{prop:indexbound2}.
Finally, the authors thank the anonymous referee whose suggestions improved the exposition of this paper.


\section{Arithmetic Background}

This section introduces the requisite notation and terminology for the proof of Theorem \ref{Sec:ArEx:T1}. The reader in need of a more detailed treatment of any portion of this material is referred to the book of Maclachlan--Reid \cite{MR} where it is all thoroughly covered. 


\subsection{Preliminary Notation}
Throughout the rest of the text, the field $k$ will always be either the rational numbers $\Q$, an imaginary quadratic extension $\Q(\sqrt{-d})$ for $d\in\mathbb{N}$ a square-free number, or the real numbers $\R$.
When $k=\Q(\sqrt{-d})$, we denote its ring of integers by $\mathcal{O}_d$.
Given a rational prime $p$, $p\mathcal{O}_d$ factors into prime ideals in three possible ways -- as a single prime $\frak{p}$, as a product $\frak{p}\overline{\frak{p}}$, or as $\frak{p}^2$ for a prime $\frak{p}$.
In all cases we say that $\frak{p}$ (or $\overline{\frak{p}}$) lies over $p$.
Moreover, in the first case we say that $p$ is inert, in the second case we say that $p$ splits, and in the last case we say that $p$ is ramified.
When $k=\Q(\sqrt{-d})$, we define the norm of a prime ideal $\frak{p}$ in $\mathcal{O}_d$ as $\Nr(\frak{p})=|\mathcal{O}_d/\frak{p}|$.
Notice that for $\frak{p}$ lying over a rational prime $p$, $\Nr(\frak{p})=p^2$ when $p$ is inert and $\Nr(\frak{p})=p$ if $p$ splits or is ramified.


\subsection{Some Lattices in $\SL(2,\C)$ from Quaternion Algebras}\label{subsec:quat}
When discussing quaternion algebras, we will always assume that $k=\Q(\sqrt{-d})$ for $d\in\mathbb{N}$ a square-free number.
A \textbf{quaternion algebra} over $k$ is a $4$-dimensional algebra $k[1,I,J,IJ]$ with multiplication determined by the rules
$$I^2=\alpha,\quad J^2=\beta,\quad IJ=-JI,$$
for some $\alpha,\beta\in k^*$.
We typically refer to quaternion algebras using the compact notation
$$\mathcal{A}=\left(\frac{\alpha,\beta}{k}\right),$$
called a \textbf{Hilbert symbol}.
We remark that the Hilbert symbol is not unique, that is, the same quaternionin algebra can be represented by several different Hilbert symbols.

Given $g=w+xI+yJ+zIJ\in\mathcal{A}$, define the norm of $g$ by $\Nr_\mathcal{A}(g)=w^2-\alpha x^2-\beta y^2 +\alpha\beta z^2$.
If there is no non-trivial $g\in\mathcal{A}$ with $\Nr_\mathcal{A}(g)=0$ then we call $\mathcal{A}$ a \textbf{division algebra}, otherwise $\mathcal{A}\cong\Mat(2,k)$ and we call $\mathcal{A}$ a \textbf{matrix algebra}.
Given any prime $\frak{p}$ in $\mathcal{O}_d$ we may form the local field $k_\frak{p}$ which is a finite extension of the p-adic field $\Q_p$, where $\frak{p}$ lies over $p$.
Taking the tensor product $\mathcal{A}_\frak{p}=\mathcal{A}\otimes_k k_\frak{p}$ yields either the matrix algebra $\Mat(2,k_\frak{p})$ or a unique isomorphism class of division algebras over $k_\frak{p}$.
In the former case we say that $\mathcal{A}_\frak{p}$ \textbf{splits} and in the latter case we say that $\mathcal{A}_\frak{p}$ is \textbf{ramified}.
We use the notation $\Ram_f(\mathcal{A})$ to denote the collection of primes in $\mathcal{O}_d$ such that $\mathcal{A}_\frak{p}$ is ramified and $r_f$ to denote its cardinality; $r_f$ is always a finite number.
By the Albert--Brauer--Hasse--Noether theorem \cite[Thm 2.7.5]{MR}, the isomorphism class of $\mathcal{A}$ is uniquely determined by the set $\Ram_f(\mathcal{A})$. Here we recall that we are assuming $k=\Q(\sqrt{-d})$, so that the infinite places do not play a role.

To build lattices in $\SL(2,\C)$, note that there is an embedding of $\mathcal{A}$ into $\Mat(2,\C)$ given by
$$\varphi:w+xI+yJ+zIJ\mapsto\begin{pmatrix}
w+x\sqrt{\alpha}&\beta(y+z\sqrt{\alpha})\\
y-z\sqrt{\alpha}&w-x\sqrt{\alpha}
\end{pmatrix},$$
from which it is clear that $\Nr_\mathcal{A}(g)=\det(\varphi(g))$.
Consequently we have an embedding of the norm one elements $\mathcal{A}^1\to\SL(2,\C)$.
A subring $\mathcal{O}<\mathcal{A}$ is called an \textbf{order} if it is also an $\mathcal{O}_d$-lattice and $\mathcal{O}\otimes_{\mathcal{O}_d}k\cong\mathcal{A}$ \cite[\S 2.2]{MR}.
We use $\mathcal{O}^1$ to denote the norm one elements of $\mathcal{O}$.
Under the embedding above, $\varphi(\mathcal{O}^1)$ is an arithmetic lattice in $\SL(2,\C)$ \cite[\S 11]{BHC}.
Moreover, $\varphi(\mathcal{O}^1)$ is a cocompact lattice if and only if $\mathcal{A}$ is a division algebra, which in our setting is precisely the condition that $r_f>0$.
We also call any lattice commensurable with $\varphi(\mathcal{O}^1)$ arithmetic.
In the sequel, we will frequently suppress the embedding $\varphi$ and assume that $\mathcal{A}$ comes equipped with a fixed embedding into $\Mat(2,\C)$.

It is worth mentioning that by letting $d$ and $\Ram_f(\mathcal{A})$ vary, the above construction produces infinitely many commensurability classes of lattices.
Moreover, in the non-cocompact setting this construction produces all commensurability classes of arithmetic lattices.
That is to say, that all non-cocompact arithmetic lattices are commensurable with $\SL(2,\mathcal{O}_d)$ as $d$ varies over all square-free natural numbers \cite[Thm 8.2.3]{MR}.
In the compact setting, this is not the case as one needs to allow for fields other than just imaginary quadratic extensions to construct all arithmetic lattices in $\SL(2,\C)$. We refer the interested reader to \cite{MR} for a more detailed discussion. 


\subsection{Lattices in $\SO(3,1;\R)$ of Simplest Type}\label{subsec:quad}
While discussing quadratic forms we will always make the simplifying assumption that $k=\Q$ or $k=\R$.
By a quadratic form over $k$ of dimension $n$, we mean a homogeneous polynomial of degree $2$ in $n$ variables with coefficients in $k$.
We say that two quadratic forms $q$ and $q'$ of dimension $n$ are \textbf{$k$-isometric} if there exists $P\in\GL(n,k)$ such that $q(x)=q'(Px)$ for all $x\in k^n$.
This is equivalent to requiring that $P^TA_{q'}P=A_{q}$ for some $P\in\GL(n,k)$, where $A_q$, $A_{q'}$ are matrix representatives of $q$, $q'$.
Any quadratic form $q$ over $k$ of dimension $n$ is $k$-isometric to a diagonal form $q'(x)=a_1x_1^2+\dots+a_nx_n^2$, where $a_i\in k$ for $i = 1, \dots, n$, see eg.~\cite[Lem 0.9.4]{MR}.
We frequently use the notation $q=\langle a_1,\dots,a_n\rangle$ to describe a choice of diagonalization of $q$ and when $a_i\neq 0$ for all $i=1,\dots, n$, we say that $q$ is \textbf{non-degenerate}.
The notion of non-degeneracy is invariant under choice of diagonalization.

For a non-degenerate quadratic form $q$ of dimension $n$ over $\Q$, the associated special orthogonal group is given by
$$\SO(q,k)=\{A\in\SL(n,k)\mid q(Ax)=q(x)\text{ for all }x\in k^n\},$$
where we still make the assumption that $k$ is either the rational or real numbers.
When $k=\Q$, we may further restrict to integral matrices and define the group
$$\SO(q,\Z)=\SO(q,\Q)\cap\SL(n,\Z).$$
In the sequel, we use the notation $q_{n,1}$ to denote the specific quadratic form $q_{n,1}(x)=x_1^2+\dots+x_n^2-x_{n+1}^2$, which we sometimes referred to as the standard quadratic form.
Then $\SO(n,1;\R)$ is defined by
$$\SO(n,1;\R)=\SO(q_{n,1},\R)=\{A\in\SL(n+1,\R)\mid q_{n,1}(Ax)=q_{n,1}(x)\text{ for all }x\in\R^{n+1}\}.$$
Given another non-degenerate, diagonal quadratic form $q=\langle a_1,\dots,a_n\rangle$, Sylvester's law of inertia implies that, up to $\R$-isometry, $q$ is completely determined by its \textbf{signature} $(n^+_q,n^-_q)$.
Here $n^+_q$ (resp. $n^-_q$) is the number of $a_i$ that are positive (resp. negative).
In particular when $q$ has signature $(n^+_q,n^-_q)$, it is $\R$-isometric to the diagonal form
$$x_1^2+\dots +x^2_{n^+_q}-x^2_{n^+_q+1}-\dots-x^2_{n}.$$
Consequently if $q$ is a non-degenerate quadratic form over $\Q$ of signature $(3,1)$ then $\SO(q,\R)\cong \SO(3,1;\R)$ and, under this isomorphism, we have an embedding of $\SO(q,\Z)$ into $\SO(3,1;\R)$.
We call this subgroup and any subgroup commensurable with it in $\SO(3,1;\R)$ an arithmetic lattice.
For such a quadratic form, we say that $q$ is \textbf{isotropic} over $\Q$ if there exists a non-trivial $x\in\Q^4$ such that $q(x)=0$ and we call it \textbf{anisotropic} over $\Q$ otherwise.
The lattice $\SO(q,\Z)$ is cocompact if and only if $q$ is anisotropic over $\Q$.

It will be worthwhile for us to make a few comments about the commensurability classification of the lattices $\SO(q,\Z)$.
Let $q=\langle a_1,\dots,a_{n+1}\rangle$ be a non-degenerate quadratic form of signature $(n,1)$ over $\Q$ (so that $a_1,\dots,a_n\in\Q_{>0}$ and $a_{n+1}\in\Q_{<0}$).
Then we define the \textbf{discriminant} $\disc(q)$ of $q$ to be the product $a_1a_2\dots a_{n+1}$ considered as an equivalence class in $\Q^*/(\Q^*)^2$.
Fixing a rational prime $p$, the \textbf{Hilbert symbol} $(a_i,a_j)_p$ is defined by
$$(a_i,a_j)_p=\begin{cases}
1,&\text{ if $a_ix^2+a_jy^2=z^2$ has a non-trivial solution in $\Q_p^3$}\\
-1,&\text{ else}
\end{cases},$$
and the \textbf{Hasse--Witt invariant} $\epsilon_p(q)$ of the quadratic form $q$ is given by the product
$$\displaystyle\epsilon_p(q)=\prod_{i<j}(a_i,a_j)_p.$$
Given a non-degenerate quadratic form $q$, the Hasse--Witt invariant and discriminant are both invariants of the isometry class of $q$ and hence independent of choice of diagonalization.
The following two theorems give a complete commensurability classification of the lattices $\SO(q,\Z)$.
\newtheorem*{HMthm}{Hasse-Minkowski Theorem}
\begin{HMthm}[See eg.~\cite{Scharlau}, Chapter 6, Corollary 6.6]
Let $q$ and $q'$ be non-degenerate quadratic forms over $\Q$ of signature $(n,1)$, then $q$ and $q'$ are $\Q$-isometric if and only if $\epsilon_p(q)=\epsilon_p(q')$ for all rational primes $p$ and $\disc(q)=\disc(q')$ as classes in $\Q^*/(\Q^*)^2$.
\end{HMthm}
\begin{thm1}[See eg.~\cite{MThesis}, \S 4.3 and 4.4] 
Let $q$ and $q'$ be non-degenerate quadratic forms over $\Q$ of signature $(n,1)$, then $\SO(q,\Z)$ is commensurable with $\SO(q',\Z)$ if and only if $q$ and $q'$ are similar, i.e. there exists $\lambda\in\Q^*$ such that $q$ is $\Q$-isometric to $\lambda q'$.
\end{thm1}

As it will be useful for later, we conclude this subsection by listing a few properties of the Hilbert symbol (see for instance \cite[Ch III, \S 1]{Serre}):
\begin{enumerate}
\item $(x,y)_p=(y,x)_p$,
\item $(xx',y)_p=(x,y)_p(x',y)_p$,
\item $(x,y)_p=1$ for all but finitely many primes $p$,
\item $(x,y)_p=1$ if $p\neq 2$ and $p$ does not divide $x$, $y$,
\item $(x,x)_p=(x,-1)_p$,
\item $(x,y)_p=(x,-xy)_p$.
\end{enumerate}


\subsection{Lattices in $\PSL(2,\C)$ and $\SO^+(3,1;\R)$}
Quotienting $\SL(2,\C)$ by its center $\langle\pm\Id\rangle$ we obtain the group $\PSL(2,\C)$, which we call the \textbf{projective special linear group}.
Taking the similar quotient for $\SO(3,1;\R)$ we obtain the \textbf{projective special orthogonal group}, which we denote by $\SO^+(3,1;\R)$ for notational consistency with \cite{MR}.
The groups $\PSL(2,\C)$ and $\SO^+(3,1;\R)$ are isomorphic and also isomorphic to the orientation preserving isometries of hyperbolic $3$-space, $\mathrm{Isom}^+(\mathbb{H}^3)$.
Given a subgroup $\Delta$ in either $\SL(2,\C)$ or $\SO(3,1;\R)$, we denote by $P(\Delta)$ its image in the quotient and we call a lattice $P(\Gamma)$ \textbf{arithmetic} when $\Gamma$ is arithmetic in either $\SL(2,\C)$ or $\SO(3,1;\R)$.
We define the \textbf{covolume} of a lattice $\Gamma$ in either $\PSL(2,\C)$ or $\SO^+(3,1;\R)$ to be the volume of the quotient $\mathbb{H}^3/\Gamma$, which we write as $\cvol(\Gamma)$.

It is important to note that though $\PSL(2,\C)$ and $\SO^+(3,1;\R)$ are isomorphic, the commensurability classes of lattices arising from Subsections \ref{subsec:quat} and \ref{subsec:quad} are not in one to one correspondence.
Indeed, the commensurability classes of lattices arising from Subsection \ref{subsec:quad} are a proper subclass of those from Subsection \ref{subsec:quat}.
We briefly mention that this proper subclass can be described as the commensurability classes of lattices where there is a representative in the isomorphism class of the quaternion algebra with Hilbert symbol
$$\mathcal{A}=\left(\frac{\alpha,\beta}{\Q(\sqrt{-d})}\right),$$
where $\alpha,\beta$ are rational. Though we do not attempt to explain the details here, this is well known and can be shown, for instance, using the discussion in \cite[\S 10.2]{MR}.


\section{Arithmetic lattice bounds}\label{arithmetic bounds}

The entirety of this section is devoted to proving the following effective theorem:

\begin{thm}\label{Sec:ArEx:T1}\EmbeddingProp\end{thm}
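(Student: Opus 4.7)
The plan is to make the strategy of Agol--Long--Reid \cite{ALR} effective. Given a $\Q$--defined form $q$ of signature $(3,1)$ in the commensurability class of $\Gamma$, one first seeks a $\Q$--defined complementary form $q'$ of signature $(3,0)$ such that the orthogonal direct sum $q\oplus q'$ is $\Q$--isometric to the standard form $q_{6,1}$. Existence of $q'$ is a Hasse--Minkowski computation matching discriminants and Hasse--Witt invariants, and its $\Q$--isometry class depends only on that of $q$, hence only on the commensurability class of $\Gamma$. Fixing such a $q'$ and a $\Q$--isometry realized by a matrix $P$, the natural inclusion $\SO^+(q,\Z)\hookrightarrow\SO^+(q\oplus q',\Z)$ given by $A\mapsto A\oplus\Id_3$ becomes, after conjugation by $P$, a homomorphism into $\SO^+(6,1;\Q)$ whose image stabilizes the image under $P$ of the time-like direction of $q$ inside $\R^4\hookrightarrow\R^7$.

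The plan then combines two effective ingredients, which I expect to be the content of Propositions \ref{prop:deltaconstruct} and \ref{prop:conjbound}. First, one produces a finite-index subgroup $\Gamma_0\leq\Gamma$ of index at most $C_\epsilon\,\cvol(\Gamma)^\epsilon$ that, after conjugation into the standard commensurability class, lies in $\SO^+(q,\Z)$ for a convenient representative $q$ of the $\Q$--isometry class. The $\cvol(\Gamma)^\epsilon$ dependence reflects the introduction's observation that maximal lattices of unbounded covolume in a fixed commensurability class cannot all be contained in a single $\SO^+(q,\Z)$ with uniformly bounded index. Second, one produces an explicit $P\in\GL(7,\Q)$ realizing the $\Q$--isometry $q\oplus q'\to q_{6,1}$, together with a bound $D$ on the common denominator of the entries of $P$ that is polynomial in the arithmetic invariants of $q$ (such as its discriminant and Hasse--Witt data).

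With these ingredients in hand, I would finish as follows. Conjugation by $P$ carries $\SO^+(q\oplus q',\Z)$ into $\SO^+(6,1;\tfrac{1}{D}\Z)$, and the principal congruence subgroup of level $D$ inside this arithmetic group has index bounded by a polynomial in $D$ (at worst $|\SL(7,\Z/D\Z)|$). Intersecting its preimage under $P$ with the image of $\Gamma_0$ produces the desired $\Delta$, whose image in $\SO^+(6,1;\Z)$ stabilizes the time-like subspace $P(\{0\}^3\times\R)$ by construction. Multiplying the two index bounds yields an overall bound of the promised shape $C_\epsilon\,D\,\cvol(\Gamma)^\epsilon$. The second assertion of the theorem then follows from the description, recalled immediately after the statement, of $\SO^+(6,1;\Z)$ as a simplex reflection group, with the factor $2^73^45$ accounting for the spherical stabilizer of a finite vertex that tiles the right-angled polyhedron $P_6$ by the fundamental simplex $\sigma$.

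The main obstacle will be establishing Proposition \ref{prop:conjbound}: producing an explicit polynomial bound $D$ on the denominators of the rational conjugator. Hasse--Minkowski gives existence of $P$ with no control on its entries, so a concrete construction is needed that tracks denominators through successive changes of basis, most plausibly via Witt cancellation or explicit extraction of hyperbolic planes after matching local data prime by prime. Keeping this estimate polynomial in the arithmetic data of $q$ uniformly across the commensurability class is where the real work lies. Proposition \ref{prop:deltaconstruct}, while technical, should follow a more standard effective route via maximal orders in the invariant quaternion algebra together with known covolume formulas for arithmetic Kleinian groups.
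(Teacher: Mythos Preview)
Your proposal is correct and follows essentially the same route as the paper: Theorem \ref{prop:deltaconstruct} produces the subgroup of index $\leq C_\epsilon\,\cvol(\Gamma)^\epsilon$ landing in $\SO^+(q',\Z)$ via the quaternion-algebra route you describe, Proposition \ref{prop:conjbound} bounds the denominators of the rational conjugator $P$ realizing $q_c\oplus q\cong q_{6,1}$ and passes to a congruence subgroup (the paper uses the stabilizer of the lattice $L(S^2)$ with index $\leq S^{42}$ via Ono's tables, which is your congruence-subgroup step), and the second assertion is exactly the \cite[Lemma 3.4]{ALR} argument you cite. One small slip: the time-like subspace stabilized is the image under $P$ of the full $\R^4$-summand (signature $(3,1)$), not $P(\{0\}^3\times\R)$, since $\SO^+(q,\Z)$ does not fix the negative-definite line of $q$.
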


We prove Theorem \ref{Sec:ArEx:T1} in Subsection \ref{thebigkahuna} as a straightforward consequence of Theorem \ref{prop:deltaconstruct} and Proposition \ref{prop:conjbound}, proved in \ref{sec:prasadvol} and \ref{sec:quadconj} (respectively). The proof strategy of each of these results is described in their respective subsections.


\subsection{}\label{sec:prasadvol}

The goal of this section is to use Borel's volume formula to estimate the index of a particular subgroup of $\Gamma$ as a function of $q$ and the volume of $\mathbb{H}^3/\Gamma$. Specifically, we prove the following.

\begin{thm}\label{prop:deltaconstruct}
Let $\Gamma$ be an arithmetic lattice commensurable with $\SO^+(q,\Z)$, where $q$ has signature $(3,1)$ over $\R$.
Then there is a $\Q$--defined quadratic form $q'$, a subgroup $\Delta$ of $\Gamma$, and an element $g\in\GL(4,\Q)$ such that $\SO^+(q',\Q)=g\SO^+(q,\Q)g^{-1}$, $g\Delta g^{-1}<\SO^+(q',\Z)$, and for any $\epsilon>0$, $[\Gamma:\Delta]\le C_\epsilon~\! V^{\epsilon},$ where $V$ is the volume of $\mathbb{H}^3/\Gamma$ and $C_\epsilon$ is a constant depending only on $\epsilon$ and $q$.
\end{thm}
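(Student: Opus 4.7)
The plan is to realize $\Delta$ as the intersection of $\Gamma$ with the $\SO^+(q,\R)$--stabilizer of a carefully chosen $\Z$--lattice $L\le\Q^4$, with $q'$ and $g\in\GL(4,\Q)$ emerging from this lattice, and then to bound $[\Gamma:\Delta]$ via Prasad's volume formula. The commensurability hypothesis places $\Gamma$ inside the commensurator $\SO^+(q,\Q)$, so at each rational prime $p$ the closure of $\Gamma$ in $\SO^+(q,\Q_p)$ is a compact open subgroup $K_p$ which equals $\SO^+(q,\Z_p)$ for all but finitely many ``bad'' primes. These bad primes quantify the arithmetic deviation of $\Gamma$ from $\SO^+(q,\Z)$ and drive the rest of the argument.

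To build the lattice $L$, I would work prime-by-prime. At hyperspecial primes --- those where $K_p$ stabilizes some non-degenerate $\Z_p$--lattice --- take $L_p$ to be such a lattice, so the local stabilizer $M_p$ of $L_p$ coincides with $K_p$. At a non-hyperspecial bad prime, instead choose $L_p$ preserved by a hyperspecial maximal compact subgroup lying at bounded distance from $K_p$ in the Bruhat--Tits building of $\SO^+(q,\Q_p)$; this incurs a local index $[K_p:K_p\cap M_p]$ polynomial in $p$ of degree controlled by the $\Q_p$--rank of the algebraic group. Standard local-to-global principles reassemble the $L_p$ into a single global $\Z$--lattice $L\le\Q^4$. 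Letting $g\in\GL(4,\Q)$ be the change-of-basis matrix from $L$ to $\Z^4$ and taking $q'$ to be $q$ expressed in a $\Z$--basis of $L$, one has $\SO^+(q',\Q)=g\SO^+(q,\Q)g^{-1}$ and $g\cdot\Stab_{\SO^+(q,\R)}(L)\cdot g^{-1}=\SO^+(q',\Z)$. Setting $\Delta := \Gamma\cap\Stab_{\SO^+(q,\R)}(L)$ then gives a subgroup of $\Gamma$ with $g\Delta g^{-1}\le\SO^+(q',\Z)$.

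The index $[\Gamma:\Delta]$ factors as the product of local indices $[K_p:K_p\cap M_p]$, with non-trivial contributions only at bad primes and each contribution polynomial in $p$. On the other hand, Prasad's volume formula gives $V=\cvol(\Gamma)$ as an explicit product of local factors over places, and at each bad prime the contribution to $V$ grows at least polynomially in $p$ with exponent comparable to the index. Pairing these produces an a priori polynomial bound on $[\Gamma:\Delta]$ in terms of $V$.

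The main obstacle is sharpening this polynomial bound to the subpolynomial $C_\epsilon V^\epsilon$. Here I would invoke classical analytic number theoretic estimates: divisor-function bounds of the form $\tau(n)=O(n^\epsilon)$ to dominate products of local contributions indexed over prime divisors of a ``level'', and class-number/discriminant bounds such as $h(k)=O(d_k^\epsilon)$ for the arithmetic data that enter Prasad's formula. The technical care lies in checking that, place-by-place, the ratio between local index growth and local covolume growth is absorbed by these subpolynomial savings, and in ensuring that all remaining constants collapse into a factor $C_\epsilon$ depending on $\epsilon$ and on invariants of the commensurability class (the $\Q$--isometry class of $q$, via its discriminant and Hasse--Witt symbols) but not on $V$. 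This step is where the precise structure of Prasad's formula is leveraged, and it is the heart of the proof.
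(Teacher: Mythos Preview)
Your route is genuinely different from the paper's. The paper does \emph{not} work with the Bruhat--Tits theory of $\SO(q)$ or Prasad's volume formula at all. Instead it exploits the exceptional isogeny $\Spin(3,1)\cong\SL_2(\C)$ to transfer everything to the quaternion picture: $\Gamma$ is carried to an arithmetic lattice $\Lambda\subset\PSL(2,\C)$ commensurable with $P(\mathcal{O}^1)$ for an order in a quaternion algebra $\mathcal{A}$ over an imaginary quadratic field $k$. Borel's classification of maximal lattices as normalizers of Eichler orders, together with the explicit Borel/Chinburg--Friedman covolume formulae, gives $[\Gamma_{S,\mathcal{O}}:P(\mathcal{E}^1)]\le 2^{|S|+r_f+1}[k_\mathcal{A}:k]$, while the covolume contributes a factor $\prod_{\frak p\in S\cup\Ram_f}(\Nr(\frak p)\pm 1)$. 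A direct prime-counting argument (at most $2\pi(x)$ primes of norm $\le x$) then shows $|S|+r_f\le \epsilon\log_2 V + O_\epsilon(1)$, which is the $V^\epsilon$ bound. This is more concrete than your divisor/class-number gestures and is what makes the constants effective.

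There is also a structural point your sketch misses that matters for how the theorem is used. The paper's construction, via the explicit conjugation $g_1$ and the fixed isomorphism $\Phi$, lands $\Delta$ in $\SO^+(q',\Z)$ for a \emph{specific} form $q'$ (similar to $q''$ of Equation~\eqref{eqn:newform}) that depends only on the commensurability class, with $|\disc(q'')|\le d^7$. This control on $\disc(q')$ is exactly what Theorem~\ref{Sec:ArEx:T1} needs to bound the constant $D$ from Proposition~\ref{prop:conjbound}. Your lattice $L$ is built prime-by-prime from the closures $K_p$ of $\Gamma$, so your $q'$ depends on $\Gamma$ itself, and its discriminant can be arbitrarily large as $\Gamma$ varies in the commensurability class. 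Thus your argument might prove the theorem as literally stated but would not feed into the main result. (In fact, if one does not care about $q'$ at all, the statement becomes much easier: since each $K_p$ is compact in $\GL_4(\Q_p)$ it stabilizes \emph{some} lattice, so one can take $\Delta=\Gamma$ with index~$1$; your hyperspecial constraint is what creates the nontrivial index, but it is also what is needed to pin down $q'$.)
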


\noindent To prove Theorem \ref{prop:deltaconstruct}, we will instead consider $\Gamma$ as a subgroup of $\PSL(2,\C)$.
To this end, we will first prove Propositions \ref{prop:indexbound}, \ref{prop:indexbound2}, and Corollary \ref{cor:indexbound3} giving a similar index bound for lattices arising from quaternion algebras. We then use an explicit isomorphism of $\SO(q,\Q)$ with $\mathcal{A}^*_q/k_q^*$ for a certain quaternion algebra $\mathcal{A}_q$ over a particular imaginary quadratic field $k_q$, which induces an isomorphism of $\PSL(2,\C)$ with $\SO^+(3,1;\R)$, to transfer the index bounds back to the orthogonal groups. For the bounds for lattices in quaternion algebras, we require the work of Borel \cite{borel} on volumes of lattices in $\PSL(2,\C)$.

Throughout, $k/\Q$ will be an imaginary quadratic extension and $\mathcal{A}/k$ will be a quaternion algebra with a fixed embedding $\mathcal{A} \to \mathrm{M}(2,\C)$ and with $P(-)$ denoting the projectivization under this embedding. Given an order $\mathcal{O} < \mathcal{A}$, we know that $\mathcal{O}^1 < \SL(2,\C)$ and $P(\mathcal{O}^1)<\PSL(2,\C)$ are arithmetic lattices. For any other order $\mathcal{O}_0$, the groups $\mathcal{O}^1,\mathcal{O}_0^1$ and $P(\mathcal{O}^1),P(\mathcal{O}_0^1)$ are commensurable. Finally, $\Lambda < \PSL(2,\C)$ will denote a lattice that is commensurable with $P(\mathcal{O}^1)$ for some order $\mathcal{O}<\mathcal{A}$.

\subsubsection{Maximal orders, Eichler orders, and maximal lattices}

Borel \cite{borel} proved that $\Lambda$ is contained in only finitely many maximal arithmetic lattices and that all maximal arithmetic lattices arise as normalizers of a specific class of Eichler orders, both of which we now describe.
An \textbf{Eichler order} is the intersection of two distinct maximal orders $\mathcal{O}_1$, $\mathcal{O}_2$ of $\mathcal{A}$, which we write as $\mathcal{E}=\mathcal{O}_1\cap\mathcal{O}_2$.
Recall that the \textbf{level} of an Eichler order $\mathcal{E}$ is the level of $\mathcal{E}_\frak{p}$ for each prime $\frak{p}$ of $\mathcal{O}_k$, that is to say that the level is the product $\prod_\frak{p}\frak{p}^{n_\frak{p}}$ where $n_\frak{p}$ is the distance between $(\mathcal{O}_1)_\frak{p}$ and $(\mathcal{O}_2)_\frak{p}$ in the tree $\mathcal{T}_\frak{p}$ associated to $\SL(2,k_\frak{p})$ (see \cite[\S 6.1, \S 6.6]{MR}). Given a fixed maximal order $\mathcal{O}$ and a finite (possibly empty) set of primes $S$ of $k$, which are disjoint from $\Ram_f(\mathcal{A})$, we may form the lattice $\Gamma_{S,\mathcal{O}}$ as follows. If $S=\emptyset$ then $\Gamma_{S,\mathcal{O}}=P(N(\mathcal{O}))$ where 
$$N(\mathcal{O})=\{x\in \mathcal{A}^*\mid x\mathcal{O}x^{-1}=\mathcal{O}\},$$
is the normalizer of $\mathcal{O}$ in $\mathcal{A}^*$ \cite[p~199]{MR}. If $S=\{\frak{p}_1,\dots,\frak{p}_r\}$, then first define $\mathcal{O}'_\frak{p}$ by
\[ \mathcal{O}'_\frak{p}=\begin{cases} \mathcal{R}_\frak{p},&\frak{p}=\frak{p}_i,\\ \mathcal{O}_\frak{p},&\frak{p}\notin S,\end{cases} \]
where $\mathcal{R}_\frak{p}$ is any choice of maximal local order which is distance one from $\mathcal{O}_\frak{p}$ in the tree $\mathcal{T}_\frak{p}$. Using the local-to-global principle \cite[V.2, Thm 2]{Weil}, the collection of local orders $\mathcal{O}'_\frak{p}$ define a global order $\mathcal{O}'$. Then the Eichler order $\mathcal{E}=\mathcal{O}\cap\mathcal{O}'$ of level $\prod_i\frak{p}_i$ allows us to define $\Gamma_{S,\mathcal{O}}=P(N(\mathcal{E}))$.
We understand the notation $\Gamma_{S,\mathcal{O}}$ to mean the lattice where some (arbitrary) choice of $\mathcal{R}_\frak{p}$ was made, however two such choices will always differ by $\mathcal{A}^1$ conjugacy and so the $\mathcal{A}^1$ conjugacy class of $\Gamma_{S,\mathcal{O}}$ only depends on $S$ and $\mathcal{O}$ (see \cite[\S 11.4]{MR} for more details). To denote a fixed choice of $\mathcal{E}$ we will sometimes say that $\Gamma_{S,\mathcal{O}}$ \emph{arises as the normalizer of $\mathcal{E}$}. It is also worth remarking that any Eichler order $\mathcal{E}$ associated to $\Gamma_{S,\mathcal{O}}$ is necessarily of square-free level.

Whereas not every $\Gamma_{S,\mathcal{O}}$ is a maximal arithmetic lattice, \cite{borel} has shown that all maximal arithmetic lattices arise as $\Gamma_{S,\mathcal{O}}$ for some finite set $S$ and some maximal order $\mathcal{O}$. Moreover, \cite{borel} and Chinburg--Friedman \cite{ChinFried} allow us to explicitly compute the volumes of lattices associated to Eichler orders of level $\prod_{i=1}^r\frak{p}_i^{n_i}$ as
\begin{equation}\label{eqn:eichvol}
\cvol(P(\mathcal{E}^1))=\frac{d^{3/2}_k\zeta_k(2)}{4\pi^2}\displaystyle\prod_{\frak{p}\in\Ram_f(\mathcal{A})}\left(\Nr(\frak{p})-1\right)\prod_{i=1}^r\Nr(\frak{p_i})^{n_i-1}\left(\Nr(\frak{p}_i)+1\right),
\end{equation}
and maximal arithmetic lattices as
\begin{equation}\label{eqn:maxvol}
\cvol(\Gamma_{S,\mathcal{O}})=\frac{d^{3/2}_k\zeta_k(2)}{8\pi^2 [k_\mathcal{A}:k]2^m}\displaystyle\prod_{\frak{p}\in\Ram_f(\mathcal{A})}\left(\frac{\Nr(\frak{p})-1}{2}\right)\prod_{\frak{p}\in S}\left(\Nr(\frak{p})+1\right),
\end{equation}
respectively.
In these equations, $m$ is an integer satisfying $0\le m\le |S|$, $d_k$ is the discriminant of $k$, $\zeta_k$ is the Dedekind zeta function of $k$, and $k_\mathcal{A}$ denotes the class field of $\mathcal{A}$ (this is $K(B)$ in the notation of \cite[\S 2]{ChinFried}).
The number theoretic quantites can be found in \cite[Ch 0,~11]{MR}, Equation \eqref{eqn:maxvol} appears, for instance, in \cite[Eqn 2.4]{LMM}, and Equation \eqref{eqn:eichvol} is a straightforward combination of \cite[Thm 11.1.3]{MR} and \cite[\S 11.2.2]{MR}.
Given $\epsilon > 0$, define
\begin{equation}\label{eqn:epsprime}
C'_\epsilon=14.5+2^{1/\epsilon+7},
\end{equation}
we then have the following proposition.

\begin{prop}\label{prop:indexbound}
Let $\Lambda$ be any arithmetic lattice contained in a maximal lattice $\Gamma_{S,\mathcal{O}}$ arising as the normalizer of an Eichler order $\mathcal{E}$ and let $V$ be the volume of $\mathbb{H}^3/\Lambda$.
Then for any $\epsilon>0$, $[\Lambda:\Lambda\cap P(\mathcal{E}^1)]\le C_{1,\epsilon}~\! V^{\epsilon}$, where $C_{1,\epsilon}>0$ is a constant depending solely on $\epsilon$ and the commensurability class of $\Lambda$.
Moreover, $C_{1,\epsilon}$ can be taken to be less than $2^{\epsilon C'_\epsilon+2}11^2d_k^{3/2}$, where $C'_\epsilon$ is as in Equation \eqref{eqn:epsprime}.
\end{prop}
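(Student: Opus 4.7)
The strategy is to reduce the index to a ratio of covolumes and then estimate the latter via Borel's volume formula. Since $\mathcal{E}^1\subset N(\mathcal{E})$, we have $P(\mathcal{E}^1)\le\Gamma_{S,\mathcal{O}}$, and together with $\Lambda\le\Gamma_{S,\mathcal{O}}$ this yields
\[ [\Lambda:\Lambda\cap P(\mathcal{E}^1)]\;\le\;[\Gamma_{S,\mathcal{O}}:P(\mathcal{E}^1)]\;=\;\frac{\cvol(P(\mathcal{E}^1))}{\cvol(\Gamma_{S,\mathcal{O}})}. \]
Any Eichler order arising from $\Gamma_{S,\mathcal{O}}$ has square-free level, so $n_i=1$ for every $i$ in \eqref{eqn:eichvol}; a direct comparison with \eqref{eqn:maxvol} causes $d_k^{3/2}\zeta_k(2)$ and the products $\prod_{\mathfrak{p}\in S}(\Nr(\mathfrak{p})+1)$ to cancel, leaving
\[ [\Gamma_{S,\mathcal{O}}:P(\mathcal{E}^1)]\;=\;2^{m+r_f+1}[k_\mathcal{A}:k]. \]
The factor $2^{r_f+1}[k_\mathcal{A}:k]$ depends only on $\mathcal{A}$, hence only on the commensurability class of $\Lambda$, and can be absorbed into $C_{1,\epsilon}$.

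It therefore remains to bound $2^m\le 2^{|S|}$ by $V^\epsilon$ up to a constant of the claimed form. The key idea is to split $S$ by the size of prime norms. For a threshold $T=T(\epsilon)$, write $S=S_{\le T}\sqcup S_{>T}$. Since $V\ge\cvol(\Gamma_{S,\mathcal{O}})$ and $m\le |S|$, Equation \eqref{eqn:maxvol} gives
\[ V\;\ge\;K(\Lambda)\,\frac{\prod_{\mathfrak{p}\in S}(\Nr(\mathfrak{p})+1)}{2^{|S|}}\;\ge\;K(\Lambda)\,(T/2)^{|S_{>T}|}, \]
where $K(\Lambda)=\frac{d_k^{3/2}\zeta_k(2)}{8\pi^2[k_\mathcal{A}:k]}\prod_{\mathfrak{p}\in\Ram_f(\mathcal{A})}\frac{\Nr(\mathfrak{p})-1}{2}$ depends only on the commensurability class. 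Choosing $T$ of the form $2^{1/\epsilon+c}$ with $c$ large enough that $\log_{T/2}2\le\epsilon$ produces
\[ 2^{|S_{>T}|}\;\le\;K(\Lambda)^{-\epsilon}\,V^\epsilon. \]
For primes of small norm, each rational prime has at most two divisors in $\mathcal{O}_k$, so $|S_{\le T}|\le 2\pi(T)$; an effective Chebyshev estimate of the form $\pi(T)\le c_0\,T/\log T$ then yields an absolute bound $2^{|S_{\le T}|}\le 2^{\epsilon C'_\epsilon}$ with $C'_\epsilon=14.5+2^{1/\epsilon+7}$, for an appropriate coherent choice of $c_0$, $T$, and the constant ``$14.5$''.

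Multiplying, $2^m\le 2^{\epsilon C'_\epsilon}\,K(\Lambda)^{-\epsilon}\,V^\epsilon$. Combining with the commensurability-class factor $2^{r_f+1}[k_\mathcal{A}:k]$ and bounding $K(\Lambda)^{-\epsilon}$ for $\epsilon\le 1$ via crude estimates ($[k_\mathcal{A}:k]\le 2$, $(\Nr(\mathfrak{p})-1)/2\ge 1/2$) consolidates the various $\mathcal{A}$-dependent pieces into a factor controlled by $11^2\,d_k^{3/2}$, producing $C_{1,\epsilon}\le 2^{\epsilon C'_\epsilon+2}\,11^2\,d_k^{3/2}$. The main obstacle is matching the constants exactly: the explicit choice of the threshold $T$, the Chebyshev constant, and the crude bounds for the commensurability pieces must be made compatible to yield the precise stated form (the introduction's acknowledgement of B.~Linowitz suggests this is where the delicate effective work lies). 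The volume-ratio calculation and the $S_{>T}$ threshold argument are, by contrast, routine once \eqref{eqn:eichvol} and \eqref{eqn:maxvol} are available.
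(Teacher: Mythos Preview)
Your overall architecture matches the paper's: reduce to the covolume ratio, compute $[\Gamma_{S,\mathcal{O}}:P(\mathcal{E}^1)]=2^{m+r_f+1}[k_\mathcal{A}:k]$ from \eqref{eqn:eichvol}--\eqref{eqn:maxvol}, and then run a threshold argument on prime norms to turn the exponential in $|S|$ into $V^\epsilon$. For the first assertion of the proposition (existence of $C_{1,\epsilon}$ depending on the commensurability class) this is adequate.

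There is, however, a genuine gap in your derivation of the explicit ``Moreover'' bound. The stated inequality $C_{1,\epsilon}\le 2^{\epsilon C'_\epsilon+2}11^2 d_k^{3/2}$ depends only on $d_k$ and $\epsilon$, \emph{not} on $r_f$. But $r_f=|\Ram_f(\mathcal{A})|$ is not bounded by any function of $d_k$: for fixed $k$ there are quaternion algebras with arbitrarily many ramified primes. So your strategy of parking $2^{r_f+1}[k_\mathcal{A}:k]$ as a commensurability-class constant and thresholding only $S$ cannot possibly yield a bound of the stated form. The paper's fix is to run the threshold argument on $\Ram_f(\mathcal{A})\cup S$ simultaneously: the factor $\prod_{\mathfrak{p}\in\Ram_f(\mathcal{A})}\frac{\Nr(\mathfrak{p})-1}{2}$ in \eqref{eqn:maxvol} forces $V$ to be large when $r_f$ is large, and one obtains $r_f+|S|\le \epsilon C'_\epsilon+\epsilon\log_2 V$, whence $2^{r_f+|S|}\le 2^{\epsilon C'_\epsilon}V^\epsilon$ with no residual $r_f$-dependence.

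A second error: your ``crude estimate'' $[k_\mathcal{A}:k]\le 2$ is false. The class field $k_\mathcal{A}$ sits inside the narrow class field, so its degree over $k$ is bounded by the class number $h_k$, which can be arbitrarily large. The paper invokes Linowitz's effective bound $h_k\le 242\,d_k^{3/4}$, and this is precisely where the factor $11^2$ (from $242=2\cdot 11^2$) and the power of $d_k$ in the final constant originate. (Incidentally, the acknowledgement to Linowitz in the introduction concerns the effective part of the next proposition, on conjugacy classes of maximal orders, not this one.)
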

\begin{proof}
By the above, $\Lambda < \Gamma_{S,\mathcal{O}}=P(N(\mathcal{E}))$ where $\mathcal{E}=\mathcal{O}\cap\mathcal{O}'$ has square-free level and $\mathcal{O}'$ is some fixed maximal order. In particular, it suffices to show that $[\Gamma_{S,\mathcal{O}}:P(\mathcal{E}^1)]\le C_{1,\epsilon}~\! V^{\epsilon}$. In this case, the level of $\mathcal{E}$ is simply the product of the primes in $S$. Combining \eqref{eqn:eichvol} and \eqref{eqn:maxvol} we see that
\begin{equation}\label{eqn:index}
[\Gamma_{S,\mathcal{O}}:P(\mathcal{E}^1)]=\frac{\cvol(P(\mathcal{E}^1))}{\cvol(\Gamma_{S,\mathcal{O}})}=2^{m+r_f+1}[k_\mathcal{A}:k]\le2^{|S|+r_f+1}[k_\mathcal{A}:k].
\end{equation}
We briefly note that $[k_\mathcal{A}:k]$ is bounded above by a constant only depending on $\mathcal{A}$. Specifically $[k_\mathcal{A}:k]\le h_k\le 242~\!d_k^{3/4}$, where $h_k$ is the class number of $k$ and depends only on $k$. Indeed, the first bound follows since $k_\mathcal{A}$ is contained in the narrow class field (whose degree is bounded above by the class number) and the second bound comes from work of Linowitz \cite[Lem 3.1]{Linowitz2}. As $d_k$ depends only on the commensurability class of $\Lambda$, we are reduced to showing that $r_f$ and $|S|$ behave logarithmically with respect to volume.

To accomplish this we use the techniques of \cite[Lem 2.5]{LMM} adapted to this setting.
By maximality and Equation \eqref{eqn:maxvol}, we have the trivial bound
\begin{equation}\label{eqn:covoleqn}
\frac{d^{3/2}_k\zeta_k(2)}{8\pi^2 [k_\mathcal{A}:k]2^m}\displaystyle\prod_{\frak{p}\in\Ram_f(\mathcal{A})}\left(\frac{\Nr(\frak{p})-1}{2}\right)\prod_{\frak{p}\in S}\left(\Nr(\frak{p})+1\right)\le \cvol(\Lambda)= V.
\end{equation}
As $0\le m\le |S|$, $\zeta_k(2)\ge 1$, and $[k_\mathcal{A}:k]\le 242~\!d_k^{3/4}$, we obtain that
\begin{equation}\label{eqn:normbnd}
\displaystyle\prod_{\frak{p}\in\Ram_f(\mathcal{A})}\left(\frac{\Nr(\frak{p})-1}{2}\right)\prod_{\frak{p}\in S}\left(\frac{\Nr(\frak{p})+1}{2}\right)\le \frac{2^4 11^2\pi^2}{d^{3/4}_k}V\le 2^4 11^2\pi^2 V.
\end{equation}
Notice that
$$\frac{1}{4^{r_f+|S|}}\displaystyle\prod_{\frak{p}\in\Ram_f(\mathcal{A})\cup S}\Nr(\frak{p})\le\displaystyle\prod_{\frak{p}\in\Ram_f(\mathcal{A})}\left(\frac{\Nr(\frak{p})-1}{2}\right)\prod_{\frak{p}\in S}\left(\frac{\Nr(\frak{p})+1}{2}\right),$$
and hence Equation \eqref{eqn:normbnd} yields
\begin{equation}\label{eqn:prodeqn}
\frac{1}{4^{r_f+|S|}}\displaystyle\prod_{\frak{p}\in\Ram_f(\mathcal{A})\cup S}\Nr(\frak{p})\le \alpha~\!V,
\end{equation}
where $\alpha=2^411^2\pi^2$.
Let $x=2^{2+1/\epsilon}$.
As $k$ is a quadratic extension, there can be at most $2\pi(x)$ primes $\frak{p}$ with $\Nr(\frak{p})\le x$, where $\pi(x)$ is the prime counting function.
Moreover, by \cite[Thm 4.6]{Apostol}
$$2\pi(x)\le 2 \frac{6x}{\ln(x)}=2 \frac{6x\log_2(e)}{\log_2(x)}\le 9\frac{2^{1/\epsilon+3}}{2+1/\epsilon},$$
and therefore Equation \eqref{eqn:prodeqn} implies
$$x^{r_f+|S|-9\frac{2^{1/\epsilon+3}}{2+1/\epsilon}}\le 4^{r_f+|S|}~\! \alpha ~\! V,$$
and consequently
\begin{align*}
r_f+|S|\le \frac{\log_2(\alpha)+ 9\frac{2^{1/\epsilon+3}}{2+1/\epsilon}\log_2(x) +\log_2(V)}{\log_2(x)-2}&\le \epsilon\left(\log_2(\alpha)+9\left(2^{1/\epsilon+3}\right)+\log_2(V)\right),\\
&\le \epsilon\left(14.5+2^{1/\epsilon+7}\right)+\epsilon\log_2(V).
\end{align*}
Revisiting Equation \eqref{eqn:index}, we therefore see that $[\Gamma_{S,\mathcal{O}}:P(\mathcal{E}^1)]=C_{1,\epsilon} V^{\epsilon}$ for some positive constant $C_{1,\epsilon}$ which depends only on $\epsilon$ and the field $k$.
Moreover, one can now see that $C_{1,\epsilon}\le 2^{\epsilon C'_\epsilon+2}11^2d_k^{3/2}$, with $C'_\epsilon$ as in Equation \eqref{eqn:epsprime}.
This completes the proof.
\end{proof}

We now define a preferred maximal order $\mathcal{O}^{\mathrm{std}}$ which we will use in the sequel. Let $\mathcal{O}^{\mathrm{std}}_\frak{p}$ denote the local maximal order defined by the unique maximal order at a ramified prime of $\mathcal{A}$ and by the maximal order $\mathrm{M}(2,\mathcal{O}_\frak{p})$ for all $\frak{p}\notin \Ram_f(\mathcal{A})$. Using the local-to-global principle, we define $\mathcal{O}^{\mathrm{std}}$ as the global maximal order of $\mathcal{A}$ with local completions $\mathcal{O}^{\mathrm{std}}_\frak{p}$.
Defining $\omega(n)$ to be the number of distinct prime divisors of a natural number $n$, we then have the following proposition, where the authors would like to thank Benjamin Linowitz for pointing out how to bound $C_2$ effectively.
Recall our terminology from the first paragraph of $\S 2.1.1$ of a maximal lattice $\Gamma=N(\mathcal{E})$ arising from a fixed Eichler order $\mathcal{E}$.

\begin{prop}\label{prop:indexbound2}
Let $\Gamma_{S,\mathcal{O}}$ be a maximal arithmetic lattice.
Then there exists an $h\in\mathcal{A}^*$, an Eichler order $\mathcal{E}_h$ such that the maximal lattice $h\Gamma_{S,\mathcal{O}}h^{-1}$ arises as the normalizer of $\mathcal{E}_h$, and an absolute constant $C_2$ depending only on the commensurability class of $\Gamma_{S,\mathcal{O}}$ such that $[P(\mathcal{E}_h^1):P((\mathcal{E}_h\cap\mathcal{O}_{\mathrm{std}})^1) ]\le C_2$. Moreover $C_2\le d_k^{\omega(d_k)A_1}$ where $A_1$ is an absolute, effectively computable constant.
\end{prop}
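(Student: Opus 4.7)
The plan is to produce $h\in\mathcal{A}^*$ by a local-to-global argument forcing $h\mathcal{O}h^{-1}$ (and hence $\mathcal{E}_h=h\mathcal{E}h^{-1}$) to agree locally with $\mathcal{O}^{\mathrm{std}}$ at every prime outside a controlled set, and then to bound the resulting index multiplicatively at those ``bad'' primes. Write $\mathcal{E}=\mathcal{O}\cap\mathcal{O}'$ with $\mathcal{O},\mathcal{O}'$ maximal orders whose tree distances $n_\frak{p}$ record the level $\prod_{\frak{p}\in S}\frak{p}$ of $\mathcal{E}$. Both $\mathcal{O}$ and $\mathcal{O}^{\mathrm{std}}$ are maximal orders in the same genus, hence conjugate at every prime $\frak{p}$ by an element of $\mathcal{A}_\frak{p}^*$. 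The obstruction to realizing all these local conjugations by a single global element lies in the finite right-ideal class set of $\mathcal{O}^{\mathrm{std}}$; since $\mathcal{A}$ satisfies the Eichler condition over $k$, the reduced norm map and the bound $h_k\le 242\,d_k^{3/4}$ used in the previous proposition show that this obstruction set is finite and effectively bounded.

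Exploiting this finiteness together with strong approximation, I would select $h\in\mathcal{A}^*$ so that $h\mathcal{O}h^{-1}$ coincides with $\mathcal{O}^{\mathrm{std}}$ at every prime except those in a set $T$ consisting of primes of $k$ lying over rational primes that ramify in $k$ or in $\mathcal{A}$. An effective ideal-class count in the spirit of \cite{Linowitz2} should give both $|T|\le A_0\,\omega(d_k)$ for an absolute effective constant $A_0$ and a uniform absolute upper bound on the tree distance $d_\frak{p}$ between $(h\mathcal{O}h^{-1})_\frak{p}$ and $\mathcal{O}^{\mathrm{std}}_\frak{p}$ at each $\frak{p}\in T$. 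No separate work is needed on $\mathcal{O}'$: at primes of $S\setminus T$ we will already have $(\mathcal{E}_h)_\frak{p}\subset(h\mathcal{O}h^{-1})_\frak{p}=\mathcal{O}^{\mathrm{std}}_\frak{p}$, so these primes contribute trivially to the index.

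With $h$ fixed, I would factor the index $[P(\mathcal{E}_h^1):P((\mathcal{E}_h\cap\mathcal{O}^{\mathrm{std}})^1)]$ as a product of local indices, up to a harmless factor of two coming from projectivization. For every $\frak{p}\notin T$ the local factor is $1$, by the containment just noted. At each $\frak{p}\in T$, a direct calculation inside $\mathcal{A}_\frak{p}$ using the tree distance bounds the local index by $\Nr(\frak{p})^{c\,d_\frak{p}}$ for some absolute $c$. Because $\Nr(\frak{p})\le d_k$ on $T$ and the distances $d_\frak{p}$ are absolutely bounded, multiplying over $\frak{p}\in T$ gives $[P(\mathcal{E}_h^1):P((\mathcal{E}_h\cap\mathcal{O}^{\mathrm{std}})^1)]\le d_k^{A_1\,\omega(d_k)}$ for an effectively computable $A_1$.

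The hardest step is the effective construction of $h$: both bounding $|T|$ by a constant multiple of $\omega(d_k)$ and establishing the uniform bound on each $d_\frak{p}$. This is precisely where the authors thank Linowitz, and the delicate class-group analysis needed for these bounds should follow the lines of \cite{Linowitz2}, with $\omega(d_k)$ entering through the number of rational primes at which the required local adjustments must concentrate. A secondary subtlety is ensuring that passage through projectivization and restriction to norm-one groups each introduce only multiplicative constants, which can then be absorbed into $A_1$.
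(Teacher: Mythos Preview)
Your architecture is right---conjugate $\mathcal{O}$ to agree with $\mathcal{O}^{\mathrm{std}}$ away from a finite set $T$, then bound the index as a product over $T$---but the identification of $T$ is where the argument breaks. You assert that $h$ can be chosen so that $T$ consists only of primes lying over rational primes ramified in $k$ or in $\mathcal{A}$, and then use $\Nr(\frak{p})\le d_k$ on $T$. This is not achievable in general. Whether $h\mathcal{O}h^{-1}$ can be made to coincide with $\mathcal{O}^{\mathrm{std}}$ everywhere is exactly the question of whether $\mathcal{O}$ and $\mathcal{O}^{\mathrm{std}}$ lie in the same $\mathcal{A}^*$--conjugacy class (type class). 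When they do not, \emph{some} residual disagreement is forced, and the primes at which one can concentrate it are those whose Artin symbol realizes the required element of $\Gal(k_\mathcal{A}/k)$; these are generically unramified primes, and a priori their norms are unbounded in terms of $d_k$. So your norm bound $\Nr(\frak{p})\le d_k$ on $T$ is unjustified, and with it the final estimate.

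The paper fills this gap with ingredients you do not invoke. First, the type classes of maximal orders are parametrized by the $2$--torsion of the id\`ele class group (this is the content of \cite{Linowitz}, not \cite{Linowitz2}), and by Gauss's genus theory this group is generated by $\omega(d_k)-1$ elements. Second, \cite[Cor~6.7.8]{MR} lets one choose a representative in each type class that differs from $\mathcal{O}^{\mathrm{std}}$ at tree distance exactly~$1$ on a set $S'$ of primes (so your ``uniform bound on $d_\frak{p}$'' is simply $d_\frak{p}=1$). The decisive step, which has no analogue in your sketch, is the effective Chebotarev theorem of Lagarias--Montgomery--Odlyzko \cite{LMO}: it supplies, for each target Galois element $\sigma\in\Gal(k_\mathcal{A}/k)$, a prime $\frak{p}_\sigma$ with $\Nr(\frak{p}_\sigma)\le d_k^{A_1}$ for an absolute effective $A_1$. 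Taking $S'$ to consist of such primes gives
\[
C_2\ \le\ \prod_{i=1}^{\omega(d_k)-1}\bigl(\Nr(\frak{p}_i)+1\bigr)\ \le\ d_k^{\omega(d_k)A_1}.
\]
Without LMO (or some equivalent effective input) there is no mechanism to control the norms of the primes in $T$, and the argument does not close.
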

\begin{proof}
First we show that $C_2$ exists for maximal orders.
Recall that any quaternion algebra $\mathcal{A}$ has finite type number, i.e. there are only finitely many $\mathcal{A}^*$ conjugacy classes of maximal orders \cite[\S 6.7]{MR}.
Moreover given a conjugacy class of maximal orders $[\mathcal{O}]$ and any fixed maximal order $\mathcal{O}''$, there is a representative $\mathcal{O}'\in[\mathcal{O}]$ and a finite set of primes $S'$ (disjoint from $\Ram_f(\mathcal{A})$), such that $\mathcal{O}'_\frak{p}=\mathcal{O}''_\frak{p}$ for $\frak{p}\notin S'$ and $d(\mathcal{O}'_\frak{p},\mathcal{O}''_\frak{p})=1$ for $\frak{p}\in S'$ \cite[Cor 6.7.8]{MR}.
Let $\mathcal{O}''=\mathcal{O}^{\mathrm{std}}$, then using the above discussion and  \cite[\S 11.2.2]{MR} there is a fixed $S'$ such that
\begin{equation}\label{eqn:orderconj}
\displaystyle\min_{\mathcal{O}'\in[\mathcal{O}]}[\mathcal{O}':\mathcal{O}'\cap \mathcal{O}^{\mathrm{std}}]\le\prod_{\frak{p}\in S'}(\Nr(\frak{p})+1).
\end{equation}
We claim there is an absolute constant $C_2$ depending only on $\mathcal{A}$ such that 
$$\displaystyle\min_{\mathcal{O}'\in[\mathcal{O}]}[\mathcal{O}':\mathcal{O}'\cap \mathcal{O}^{\mathrm{std}}]\le C_2,$$
where $C_2$ can be chosen independent of the choice of conjugacy class $[\mathcal{O}]$.
Indeed, the bound for one conjugacy class is immediate from Equation \eqref{eqn:orderconj} and from this one simply lets $C_2$ be the maximum of the the righthand side as $[\mathcal{O}]$ varies over the finitely many conjugacy classes of maximal orders.

Now we show that $C_2$ exists for Eichler orders.
By construction of $\Gamma_{S,\mathcal{O}}$, it follows that $\Gamma_{S,h\mathcal{O}h^{-1}}=h\Gamma_{S,\mathcal{O}}h^{-1}$.
Let $h$ be such that $\mathcal{O}'=h\mathcal{O} h^{-1}$ minimizes the left side of Equation \eqref{eqn:orderconj} for the conjugacy class $[\mathcal{O}]$ and let $S'$ be the associated finite set of primes such that $d(\mathcal{O}'_\frak{p},\mathcal{O}^{\mathrm{std}}_\frak{p})=1$.
Then we may write any choice of Eichler order giving rise to $h\Gamma_{S,\mathcal{O}}h^{-1}$ as $\mathcal{E}_h=\mathcal{O}'\cap\mathcal{O}''$ where $\mathcal{O}''$ is defined locally by the rules
$$
\mathcal{O}''_\frak{p}=\begin{cases}
\mathcal{O}'_\frak{p},&\frak{p}\notin S,\\
\mathcal{R}_\frak{p},&\frak{p}\in S\backslash S',\\
\mathcal{O}^{\mathrm{std}}_\frak{p},&\frak{p}\in S\cap S',
\end{cases}$$
with $\mathcal{R}_\frak{p}$ being any choice of local maximal order associated to a vertex of distance $1$ in $\mathcal{T}_\frak{p}$.
Notice that $(\mathcal{E}_h)_\frak{p}$ is not contained in $\mathcal{O}_\frak{p}^{\mathrm{std}}$ if and only if $\frak{p}\in S'\backslash S$ and consequently
$$[\mathcal{E}_h:\mathcal{E}_h\cap \mathcal{O}^{\mathrm{std}}]=\prod_{\frak{p}\in S'\backslash S}(\Nr(\frak{p})+1),$$
As $S'$ is a fixed finite set (independent of $S$) we see that
$$[\mathcal{E}_h:\mathcal{E}_h\cap \mathcal{O}^{\mathrm{std}}]\le C_2.$$
Intersecting with the norm $1$ subgroup we get the similar bound
$$[\mathcal{E}^1_h:(\mathcal{E}_h\cap \mathcal{O}^{\mathrm{std}})^1]\le C_2.$$
From which the existential part of the proposition follows.

To bound $C_2$ effectively note that one can parametrize the conjugacy classes of maximal orders by instead parametrizing the $2$--torsion part of the idele class group \cite[Prop 4.1]{Linowitz}.
Interpreting the construction in \cite[\S 4]{Linowitz} properly, one can see that for any two conjugacy classes of maximal orders we have
$$[P(\mathcal{E}_h^1):P((\mathcal{E}_h\cap\mathcal{O}_{\mathrm{std}})^1) ]\le C_2\le \prod_{i=1}^r(\Nr(\frak{p}_i)+1),$$
for the $h$ constructed above and where the product is over the primes $\{\frak{p}_1,\dots,\frak{p}_r\}$ generating the $2$--torsion part of the idele class group.
It is a result of Gauss \cite{Gauss} that this is generated by $r=\omega(d_k)-1$ primes where $\omega(d_k)$ is the number of distinct prime divisors of $d_k$ (see also the discussion in Cohn \cite{Cohn}).
Moreover, it is a consequence of Artin reciprocity that the generators of the $2$--part of the idele class group are given by primes $\frak{p}_\sigma$ whose Artin symbol represents each conjugacy class in the Galois group $\Gal(k_\mathcal{A}/k)$, i.e. $\sigma=(k_\mathcal{A}/k,\frak{p}_\sigma)$.
By Lagarias--Montgomery--Odlyzko \cite[Thm 1.1]{LMO}, there is an absolute effectively computable constant $A_1$ such that $\Nr(\frak{p}_\sigma)\le d_k^{A_1}$ for all $\sigma\in\Gal(k_\mathcal{A}/k)$.
Consequently we see that
$$C_2= \prod_{i=1}^{\omega(d_k)-1}(\Nr(\frak{p})+1)\le d_k^{\omega(d_k)A_1},$$
giving the second claim and completing the proposition.
\end{proof}

\begin{cor}\label{cor:indexbound3}
Given an arithmetic lattice $\Lambda$ of covolume $V$, there is an element $h\in\mathcal{A}^*$ such that for any $\epsilon>0$ there is a constant $C_\epsilon$ which depends only on $\epsilon$ and the commensurability class of $\Lambda$ (i.e. only on $\mathcal{A}$ and $k$) such that $[\Lambda_h:\Lambda_h\cap P((\mathcal{O}^{\mathrm{std}})^1)]\le C_\epsilon V^{\epsilon}$ where $\Lambda_h=h\Lambda h^{-1}$.
Moreover, $C_\epsilon\le 2^{\epsilon C'_\epsilon+2}11^2d_k^{A_1\omega(d_k)+3/2}$.
\end{cor}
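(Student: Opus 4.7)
The plan is to combine the two preceding propositions, using Borel's result that every arithmetic lattice in this commensurability class is contained in a maximal one of the form $\Gamma_{S,\mathcal{O}}$.

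First, I would apply Borel's classification to produce a maximal arithmetic lattice $\Gamma_{S,\mathcal{O}}$ containing $\Lambda$, arising as the normalizer of an Eichler order $\mathcal{E} = \mathcal{O}\cap \mathcal{O}'$ of square-free level. Next, I would invoke Proposition \ref{prop:indexbound2} to produce the conjugating element $h\in\mathcal{A}^*$: applied to $\Gamma_{S,\mathcal{O}}$ it yields an Eichler order $\mathcal{E}_h$ arising from the conjugate maximal lattice $h\Gamma_{S,\mathcal{O}}h^{-1} = \Gamma_{S,h\mathcal{O} h^{-1}}$, together with the bound
\[ [P(\mathcal{E}_h^1) : P((\mathcal{E}_h\cap \mathcal{O}^{\mathrm{std}})^1)] \le C_2 \le d_k^{\omega(d_k)A_1}. \]

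Now observe that $\Lambda_h = h\Lambda h^{-1}$ has the same covolume $V$ as $\Lambda$ and is contained in the maximal lattice $h\Gamma_{S,\mathcal{O}}h^{-1}$ whose associated Eichler order is $\mathcal{E}_h$. So Proposition \ref{prop:indexbound} applies directly to $\Lambda_h$, giving for every $\epsilon>0$
\[ [\Lambda_h : \Lambda_h \cap P(\mathcal{E}_h^1)] \le C_{1,\epsilon}\, V^{\epsilon} \le 2^{\epsilon C'_\epsilon+2}\,11^2\,d_k^{3/2}\, V^{\epsilon}. \]

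To assemble the corollary, I would use the containment $(\mathcal{E}_h\cap \mathcal{O}^{\mathrm{std}})^1 = \mathcal{E}_h^1 \cap (\mathcal{O}^{\mathrm{std}})^1 \subset (\mathcal{O}^{\mathrm{std}})^1$, which yields $\Lambda_h \cap P((\mathcal{E}_h\cap\mathcal{O}^{\mathrm{std}})^1) \subset \Lambda_h \cap P((\mathcal{O}^{\mathrm{std}})^1)$ and hence an inequality of indices in the reverse direction. Inserting the intermediate subgroup $\Lambda_h \cap P(\mathcal{E}_h^1)$ and applying multiplicativity of indices gives
\[ [\Lambda_h : \Lambda_h \cap P((\mathcal{O}^{\mathrm{std}})^1)] \;\le\; [\Lambda_h : \Lambda_h \cap P(\mathcal{E}_h^1)] \cdot [P(\mathcal{E}_h^1) : P((\mathcal{E}_h \cap \mathcal{O}^{\mathrm{std}})^1)], \]
since restricting the second quotient to $\Lambda_h$-cosets only shrinks it. Substituting the two bounds above and setting $C_\epsilon = C_{1,\epsilon}\cdot C_2$ produces the desired estimate
\[ [\Lambda_h : \Lambda_h \cap \mathcal{O}^{\mathrm{std}}] \le 2^{\epsilon C'_\epsilon + 2}\, 11^2\, d_k^{A_1\omega(d_k)+3/2}\, V^{\epsilon}, \]
with $C_\epsilon$ depending only on $\epsilon$, $\mathcal{A}$, and $k$ as required.

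No step poses a serious obstacle here, since all the analytic and arithmetic work has been carried out in the previous two propositions; the only mild care needed is in verifying that the containments of orders and their norm-one groups are compatible with intersections, and that the indices combine in the correct direction after projectivizing.
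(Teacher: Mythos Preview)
Your proposal is correct and follows essentially the same route as the paper: choose a maximal lattice $\Gamma_{S,\mathcal{O}}$ containing $\Lambda$, take the $h$ furnished by Proposition~\ref{prop:indexbound2}, apply Proposition~\ref{prop:indexbound} to $\Lambda_h$, and combine the two index bounds multiplicatively with $C_\epsilon=C_{1,\epsilon}C_2$. The paper's write-up is slightly terser (it bounds $[\Lambda_h:\Lambda_h\cap\mathcal{E}_h\cap\mathcal{O}^{\mathrm{std}}]$ directly rather than passing through the containment you spell out), but the content is the same.
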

\begin{proof}
This is a simple combination of the preceding two propositions with $C_\epsilon=C_{1,\epsilon} C_2$.
Indeed, let $\Gamma_{S,\mathcal{O}}$ be any maximal arithmetic lattice containing $\Lambda$ and let $h$ be the element supplied by Proposition \ref{prop:indexbound2}.
Then
\begin{align*}
[\Lambda_h:\Lambda_h\cap P((\mathcal{O}^{\mathrm{std}})^1)]&\le [\Lambda_h:\Lambda_h\cap P((\mathcal{E}_h\cap \mathcal{O}^{\mathrm{std}})^1)]\\
&=[\Lambda_h:\Lambda_h\cap P(\mathcal{E}^1_h)][\Lambda_h\cap P(\mathcal{E}^1_h):\Lambda_h\cap P(\mathcal{E}^1_h\cap(\mathcal{O}^{\mathrm{std}})^1)]\le C_\epsilon V^{\epsilon},
\end{align*}
where the first index is bounded by an application of Proposition \ref{prop:indexbound} and the second is the bound from Proposition \ref{prop:indexbound2} transferred to the intersection with $\Lambda_h$.
\end{proof}

Before proceeding to the proof of Theorem \ref{prop:deltaconstruct} we recall a couple of useful facts.
Given the quadratic form $q$, if $q'$ is any $\Q$--isometric quadratic form then there is some $g\in\GL(4,\Q)$ such that the corresponding orthogonal groups are $\Q$--conjugate, i.e. $g\SO^+(q,\Q)g^{-1}=\SO^+(q',\Q)$.
Moreover if $q'$ is similar to $q$, that is to say $q'=\lambda q$ for some $\lambda\in\Q^*$, then in fact $\SO^+(q,\Q)=\SO^+(q',\Q)$.
These operations clearly preserve commensurability classes of lattices.
A discussion of this can be found, for instance, in \cite[Lem 4.2]{Meyer}.

We recall from \cite[\S 10.2]{MR} that given any quaternion algebra over an imaginary quadratic number field $\mathcal{A}=(\alpha,\beta)_{\Q(\sqrt{-d})}$, we have an exact sequence
$$\xymatrix{1\ar[r]&\Q(\sqrt{-d})^*\ar[r]&\mathcal{A}^*\ar[r]^-{\widetilde{\Phi}}&\ar[r]\SO(q',\Q)&1}$$
where $q'=\langle 1,d\alpha,d\beta,-d\alpha\beta\rangle$.
Tensoring this exact sequence by $\R$ and passing to index two subgroups induces an isomorphism $\Phi$ from $\PSL(2,\C)$ to $\SO^+(q,\R)\cong\SO^+(3,1;\R)$.
Moreover this isomorphism can be chosen to preserve our standard integral structure, i.e. such that $\Phi((\mathcal{O}^{\mathrm{std}})^1)\subset \SO(q,\Z)$.

\begin{proof}[Proof of Theorem \ref{prop:deltaconstruct}]
Given a quadratic form $q/\Q$ with signature $(3,1)$ over $\R$, we write $q=\langle z_1,z_2,z_3,-z_4\rangle$ where, up to similarity, we may assume that the $z_i$ are positive integers. Define $d=z_1z_2z_3z_4$, then $k=\Q(\sqrt{-d})$ is an imaginary quadratic field. Using the discussion of the preceding paragraph, we may transfer $\Gamma$ to an arithmetic lattice in $\PSL(2,\C)$, namely let 
$$g_1=\begin{pmatrix}
1/z_1&&&\\
&z_3z_4&&\\
&&z_2z_4&\\
&&&z_2z_3z_4
\end{pmatrix},$$
then $g_1$ produces a conjugate group $\SO^+(q',\Q)=g_1\SO^+(q,\Q)g_1^{-1}$ where $q'$ is the quadratic form given by
$$q'=\langle 1/z_1,z_2(z_3z_4)^2,z_3(z_2z_4)^2,-(z_2z_3z_4)^2z_4\rangle.$$
Notice that $q'$ is similar to the form 
\begin{equation}\label{eqn:newform}
q''=\langle 1,z_1z_2(z_3z_4)^2,z_1z_3(z_2z_4)^2,-z_1(z_2z_3z_4)^2z_4\rangle,
\end{equation}
which via $\widetilde\Phi$ is isomorphic to the quotient of the units of $\mathcal{A}=\left(\frac{z_3z_4,z_2z_4}{k}\right)$ modulo $k^*$.
Altogether this shows that there is some lattice $\Lambda<\PSL(2,\C)$ which is the image of $\Gamma$ under the composition of conjugation by $g_1$ and $\Phi$. By Corollary \ref{cor:indexbound3} there is $h\in\mathcal{A}^*$ such that for any $\epsilon>0$ there is a constant $C_\epsilon$ depending only on $\epsilon$ and the commensurability class of $q$ such that $[\Lambda_h:\Lambda_h \cap P((\mathcal{O}^{\mathrm{std}})^1)]\le C_\epsilon~\! V^{\epsilon}$, where $\Lambda_h=h\Lambda h^{-1}$.
Let $g_2=\Phi^{-1}(h)$, $g=g_2g_1$, and $\Delta=g^{-1}\Phi^{-1}((\Lambda_h\cap P((\mathcal{O}^{\mathrm{std}})^1)))g$.
Then $\Delta$ is a subgroup of $\SO^+(q,\Q)$ such that $[g\Gamma g^{-1}:g\Delta g^{-1}]\le C_\epsilon~\! V^{\epsilon}$, $g\SO^+(q,\Q) g^{-1}=\SO^+(q',\Q)$, and $g\Delta g^{-1}<\SO^+(q',\Z)$.
\end{proof}


\subsection{}\label{sec:quadconj}

In this subsection we give some preliminaries on the classical theory of quadratic forms, with the ultimate goal of proving the following proposition.

\begin{prop}\label{prop:conjbound}
Given $\SO^+(q,\Z)$ for $q=\langle z_1,z_2,z_3,-z_4\rangle$ a $\Q$-defined quadratic form of signature $(3,1)$ with each $z_i$ a positive integer, there exists a finite index subgroup $\Delta < \SO^+(q,\Z)$ and an injective homomorphism $\Delta \to \SO^+(6,1;\Z)$.
Moreover, the index of $\Delta$ in $\SO^+(6,1;\Z)$ is at most $D\le A~\! d^{{2.4\cdot 10^{15}}}$, where $d=z_1z_2z_3z_4$ and $A$ is an absolute, effectively computable constant.
If additionally any of the $z_i$ are equal to $1$ for $1\le i\le 3$, then we may take $D\le A~\!d^{4.25\cdot 10^{12}}$.
\end{prop}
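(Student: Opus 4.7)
The plan is to make effective the Agol--Long--Reid construction of \cite{ALR}. The key is to produce a positive definite, three-dimensional $\Q$--defined form $q_c$ so that $q \oplus q_c$ is $\Q$--isometric to the standard form $q_{6,1} = \langle 1,1,1,1,1,1,-1\rangle$. Once such a $q_c$ is found, there is a natural inclusion $\SO^+(q,\Z) \hookrightarrow \SO^+(q \oplus q_c,\Z)$ by acting trivially on the $q_c$--summand, and any rational isometry $P \in \GL(7,\Q)$ satisfying $P^t(q \oplus q_c)P = q_{6,1}$ conjugates the image into $\SO^+(6,1;\Q)$. The desired $\Delta$ is then obtained as the intersection of $\SO^+(q,\Z)$ with the $P$--conjugate preimage of $\SO^+(6,1;\Z)$.

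First I would construct $q_c$ using Hasse--Minkowski. Writing $q_c = \langle a_1,a_2,a_3\rangle$, the discriminant condition amounts to $a_1 a_2 a_3 \equiv d \pmod{(\Q^*)^2}$, and the Hasse--Witt condition reduces to finitely many local constraints, one for each prime dividing $2d$ (by property (4) of the Hilbert symbol in Subsection 2.3 the remaining primes contribute trivially). Using the multiplicativity listed at the end of Subsection 2.3, one can solve these constraints explicitly by choosing each $a_i$ as a product of primes dividing $2d$ together with a bounded correction factor; this produces $a_i$ polynomial in $d$.

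Next I would bound the common denominator $N$ of $P$ and $P^{-1}$. One can construct $P$ by an explicit sequence of rational isometries: diagonalize $q \oplus q_c$ over $\Z$, then identify the resulting seven--dimensional diagonal form with $q_{6,1}$ via successive rational solutions to the ternary Legendre equations associated to pairs of diagonal entries. The Holzer--Mordell bound provides solutions with denominators polynomial in the coefficients at each step, and compounding these through the chain gives $N \leq d^{c}$ for an explicit exponent $c$. Any element of the principal congruence subgroup of $\SO^+(q \oplus q_c,\Z)$ of level $N^2$ is sent by $P$--conjugation into $\SO^+(6,1;\Z)$, so
\[ [\SO^+(q,\Z):\Delta] \leq \bigl|\SO(7,\Z/N^2\Z)\bigr| \leq A_0\, N^{42} \leq A\, d^{42 c}, \]
where $A_0$ is an absolute effectively computable constant and $42 = 2\dim \SO(7)$.

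The main obstacle is quantitative rather than conceptual: carefully tracking the denominator $N$ through every stage of the construction. The Hasse--Minkowski step contributes one layer of growth in the $a_i$, and each subsequent Legendre--reduction contributes another; their compounding is responsible for the very large exponent $2.4 \cdot 10^{15}$ in the statement. The improved bound $d^{4.25 \cdot 10^{12}}$ when some $z_i$ with $1 \leq i \leq 3$ equals $1$ reflects that one stage of the diagonalization then becomes trivial, eliminating a corresponding factor in the growth of $N$ and producing a substantially smaller exponent.
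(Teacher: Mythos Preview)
Your proposal is correct and follows essentially the same route as the paper: construct $q_c$ with effective coefficient bounds, build the rational isometry $P$ by an iterated reduction to standard form, and pass to the level-$N^2$ congruence subgroup with the index bounded by $N^{42}$. The paper's only differences are in the specific tools---it constructs $q_c$ via Serre's recipe together with Heath-Brown's effective Linnik theorem (Lemma~\ref{lemma:CompFormLemma}), and at each reduction step it finds a vector of norm $1$ in the full form using Cassels' theorem (Proposition~\ref{prop:CongMat}) rather than pairing off diagonal entries via Holzer--Mordell; the iteration count (six steps, or five when some $z_i=1$) matches your explanation of the improved exponent.
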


\noindent To demonstrate the existence of such a finite index subgroup, it suffices to see that there exists a $\Q$--defined quadratic form $q_c$ of signature $(3,0)$  such that $q_c\oplus q$ is $\Q$--isometric to $q_{6,1}$.
We will point out how to construct such a form using local invariants below.
In order to then give an estimate for $D$ in Proposition \ref{prop:conjbound}, we will need to understand the $\Q$--isometry which takes $q_c\oplus q$ to $q_{6,1}$ and how it affects $\SO^+(q_c\oplus q,\Z)$.
More specifically, there exists $P\in\GL(7,\Q)$ such that the map $h\mapsto PhP^{-1}$ is an isomorphism of the groups $\SO^+(q_c \oplus q,\Q)$ and $\SO^+(6,1;\Q)$. The denominators of $P$ control the index of the subgroup of $\SO^+(q_c\oplus q,\Z)$ that has image in $\SO^+(6,1;\Z)$. Using largely elementary methods, we will provide explicit bounds for these denominators. 

Recall that quadratic forms up to isometry are completely determined by their rank, signature, discriminant, and Hasse--Witt invariants.
The latter two of these are elements of $\Q^*/(\Q^*)^2$ and $\{-1,1\}$ respectively (see the discussion in Subsection \ref{subsec:quad}).
By \cite{MThesis} (see also \cite[\S 7]{Meyer}), there exists a definite quadratic form $q_c$ of signature $(3,0)$ such that $q_c  \oplus q$ is $\Q$-isometric to $q_{6,1}$.
The invariants of $q_c$ are controlled by those of $q$ and $q_{6,1}$ and therefore $q_c$ is determined up to $\Q$--isometry by the following two conditions
\begin{align}
\label{eqn:disc} \disc (q_c) &= -\disc (q) \in \Q^*/(\Q^*)^2, \\ 
\label{eqn:hasse} 1 &= \epsilon_p(q)\epsilon_p(q_c).
\end{align}
To see Equation \eqref{eqn:hasse} note that the condition $\epsilon_p(q_{6,1})=\epsilon_p(q_c\oplus q)$ forces that
\begin{align}
\prod_{1\le i<j<7}(1,1)_p\prod_{i=1}^6(1,-1)_p&=\epsilon_p(q)\epsilon_p(q_c)\epsilon_p(\langle\disc(q),\disc(q_c)\rangle),\nonumber\\
1&=\epsilon_p(q)\epsilon_p(q_c)\label{eqn:CompFormHilb},
\end{align}
where on the righthand side we have used that
$$\epsilon_p(\langle\disc(q),\disc(q_c)\rangle)=(\disc(q_c),\disc(q))_p=(-\disc(q),\disc(q))_p=1,$$
with the last equality holding by definition of the Hilbert symbol.

Throughout the remainder of the section we use $-d$ to denote the product of the coefficients of $q$, that is to say that if $q=\langle z_1,z_2,z_3,-z_4\rangle$ for positive integers $z_i$ then $d=z_1z_2z_3z_4$.
Writing $q_c=\langle \alpha,\beta,\gamma\rangle$ for $\alpha,\beta,\gamma$ and square-free positive integers, we first prove the following effective lemma.

\begin{lemma}\label{lemma:CompFormLemma}
The form $q_c$ can be chosen so that $\alpha\beta\gamma$ is less than $D_0~\! d^{16}$, where $D_0$ is an absolute, effectively computable constant.
\end{lemma}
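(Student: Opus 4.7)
The first step is to invoke Hasse--Minkowski together with the classification of ternary quadratic forms by discriminant, Hasse--Witt invariants, and signature to establish existence: any prescribed collection of local data satisfying the product formula for Hilbert symbols is realized by a rational quadratic form, so a positive definite $q_c$ satisfying $\disc(q_c) = -\disc(q)$ in $\Q^*/(\Q^*)^2$ and $\epsilon_p(q_c) = \epsilon_p(q)$ for every prime $p$ exists. The content of the lemma is thus to bound the coefficients of such a $q_c$ effectively in $d$, and this is where I would focus attention.

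The construction I would use sets $q_c = \langle \alpha,\beta,\gamma\rangle$ with $\alpha,\beta,\gamma$ positive square-free integers whose product lies in the square class of $d$. Writing $d = d_0 m^2$ with $d_0$ square-free, any such $q_c$ has $\alpha\beta\gamma = d_0 n^2$ for some positive integer $n$, and since $\epsilon_p(\langle\alpha,\beta,\gamma\rangle)=1$ for $p \nmid 2\alpha\beta\gamma$, the Hasse--Witt condition need only be checked at the finite set $S$ of primes dividing $2d_0 n$. Among the finitely many partitions of the prime factors of $d_0$ into an ordered triple $(\alpha_0,\beta_0,\gamma_0)$ with $\alpha_0\beta_0\gamma_0 = d_0$, I would look for a partition together with an auxiliary prime $p^*$ such that one of the three forms $\langle p^*\alpha_0, p^*\beta_0, \gamma_0\rangle$, $\langle p^*\alpha_0, \beta_0, p^*\gamma_0\rangle$, $\langle \alpha_0, p^*\beta_0, p^*\gamma_0\rangle$ has the prescribed Hasse--Witt invariants. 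Multiplying two of the three coefficients by $p^*$ preserves the discriminant class (since $(p^*)^2$ is a square) while modifying the local Hasse--Witt invariant at each $p$ by a factor $(p^*, -\alpha_0\beta_0)_p$ (or an analogous Hilbert symbol in the other two variants), computable directly from bimultiplicativity.

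The final step is to find such a prime $p^*$. The conditions on $p^*$ take the form of specifying $(p^*, -\alpha_0\beta_0)_p$ for each $p \in S$, which via the standard reciprocity properties of the Hilbert symbol translate into congruence conditions on $p^*$ modulo an integer dividing a small power of $2$ times $d_0$. The product formula for Hilbert symbols ensures the resulting congruence class is nonempty provided the target values are compatible, and the freedom to adjust $(\alpha_0,\beta_0,\gamma_0)$ and the choice of variant gives enough degrees of freedom to reach a compatible configuration. Dirichlet's theorem then guarantees such a prime $p^*$, and an effective form of Dirichlet's theorem (Linnik's theorem, exactly as used in Proposition \ref{prop:indexbound2}) bounds the smallest such prime by $p^* \le C_1 d^{A_1}$ for an absolute, effectively computable Linnik constant $A_1$. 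Combining gives $\alpha\beta\gamma \le d_0 (p^*)^2 \le D_0\, d^{1+2A_1}$, and the exponent $16$ in the statement comfortably absorbs $A_1$ together with the small loss from the $2$-power in the modulus.

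The hardest step in carrying this out will be the combinatorial bookkeeping: verifying that among the $3^{\omega(d_0)}$ partitions of $d_0$ and the three choices of which pair of coefficients to multiply by $p^*$, there is always a configuration whose remaining Hasse--Witt discrepancy at each $p \in S$ is correctable by a single Hilbert symbol $(p^*, \cdot)_p$ depending only on the residue class of $p^*$ modulo $8d_0$. Extra care is needed at $p=2$, where Hilbert symbol computations are most delicate, and one must check that the target residue class does not conflict with the primality of $p^*$ (no fatal divisibility requirement is imposed). Once this consistency is established, the stated bound follows by direct estimation.
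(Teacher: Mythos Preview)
Your strategy is the same as the paper's at the top level: both reduce the construction of a small $q_c$ with prescribed discriminant and Hasse--Witt invariants to finding an element of $\Q^*$ satisfying a finite system of Hilbert-symbol conditions, and then invoke an effective Linnik-type bound to control its size. The paper, however, does not use your partition-plus-single-prime parametrization; it follows Serre's proof of \cite[Prop.~IV.7]{Serre} verbatim, taking $q_c=\langle x,c,cdx\rangle$ and first choosing $c\mid 2d$ (Claim~1) so that $-cd$ is a non-square in $\Q_p$ at every prime where a correction is required, then finding $x$ with $(x,-cd)_p$ prescribed (Claim~2). The point of Claim~1 is exactly the non-degeneracy issue you flag as ``combinatorial bookkeeping'': if $-cd\in(\Q_p^*)^2$ at some prime $p$ where the target value is $-1$, then $(x,-cd)_p=1$ identically and no $x$ can help. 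Serre's device is that as $c$ ranges over divisors of $2d$ one can flip the parity of $v_p(-cd)$ at each $p\mid 2d$ independently, forcing non-squareness where needed.

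Your parametrization faces the same obstruction but with less freedom: you need $-\alpha_0\beta_0$ (or the analogue for your chosen variant) to be a non-square in $\Q_p$ simultaneously at every prime requiring correction, subject to the constraint $\alpha_0\beta_0\gamma_0=d_0$ and a \emph{single global} choice of variant. You acknowledge this step is unverified, and it is a genuine gap rather than routine---particularly at $p=2$, where an odd $-\alpha_0\beta_0\equiv 1\pmod 8$ is a $2$-adic square and you would have to argue that some reshuffling of primes among $\alpha_0,\beta_0,\gamma_0$ avoids this. Two smaller discrepancies: the paper's $x$ is not a single prime but a product $x'q$, with $x'\le 8d$ handling the primes of $S$ via the Chinese remainder theorem and $q$ a Linnik prime in a residue class modulo $8\prod_{\ell\in S,\,\ell\ne 2}\ell$ handling a residual set $T'$ disjoint from $S$; and the effective input is Heath-Brown's Linnik exponent $5.5$, yielding $x\le D_0'' d^{6.5}$ and hence $\alpha\beta\gamma=x^2c^2d\le D_0 d^{16}$, rather than the Lagarias--Montgomery--Odlyzko bound you cite from Proposition~\ref{prop:indexbound2}.
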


\begin{proof}
We will effectivize the proof given in Serre \cite{Serre} of the existence of an explicit global form $q_c$ constructed from the local invariants. In \cite[Prop IV.7]{Serre}, the complimentary form $q_c$ is proven to exist and constructed so that
\begin{equation}\label{eqn:serreform}
q_c=\langle x,c,cdx\rangle,
\end{equation}
with $d$ as previously defined and for well chosen $x$ and $c$ (for the reader's convenience we try to adopt as much of the notation in Serre's proof as possible).
To show how this will give a genuine complementary form, we briefly comment on the properties of $x$ and $c$ that we require.
First note that trivially
\[ \disc(q_c)=d=-\disc(q), \]
so Equation \eqref{eqn:disc} is satisfied.
In choosing $c$, we will require that if $p$ is such that $(-d,-1)_p=-\epsilon_p(q)$ then $[c]_p$ is in a different square class as $[-d]_p$ in $\Q^*_p/\Q_p^{*2}$.
We then choose $x$ such that
$$(x,-cd)_p=(c,-d)_p\epsilon_p(q),$$
or equivalently that
$$(x,-cd)_p(c,-d)_p=\epsilon_p(q).$$
Using some basic Hilbert symbol arithmetic (see the properties at the end of Subsection \ref{subsec:quad}), one sees that
$$\epsilon_p(q_c)=(x,c)_p(x,cdx)_p(c,cdx)_p=(x,-cd)_p(c,cdx^2)_p=(x,-cd)_p(c,-d)_p=\epsilon_p(q),$$
which is precisely the condition from Equation \eqref{eqn:hasse}.
That such integers $c$ and $x$ exist is proved in \cite[Thm III.4]{Serre}, the proof of which we now effectivize.
Alternatively, that $c$ and $x$ exist will follow from what is written below.
\normalsize

\textbf{Claim 1:} \emph{$c$ can be chosen so that $c$ divides $2d$.}

Let $A$ be the finite set of primes such that $(-d,-1)_p=-\epsilon_p(q)$ ($A$ is $S$ in Serre's notation), where we remark that the discriminant of $q_c$ is in the same square class as $d$.
Notice by properties of the Hilbert symbol, that the set $A$ must be contained in the set $\mathcal{P}$ of prime divisors of $2d$.
In particular, if $p\nmid 2d$ then $\epsilon_p(q)$ is by definition trivial (since it is a product of trivial Hilbert symbols) and similarly the righthand side of
$$(-d,-1)_p=(-1,-1)_p(d,-1)_p,$$
is a product of two trivial Hilbert symbols when $p\nmid 2d$.
For any $p\in\mathcal{P}$, define $z_p$ to be $0$ if the power of $p$ that divides $d$ is odd and $1$ if it is even.
Then $c=\prod_{p\in\mathcal{P}}p^{z_p}$ is not in the same square class as $-d$ for all $p$ in $\mathcal{P}$ and hence $A$ (see for instance \cite[p~18]{Serre}).
This completes the claim.

Given this $c$, we next construct $x$ as in Equation \eqref{eqn:serreform}.
Let $\epsilon'_p=(c,-d)_p\epsilon_p(q)$ then we find $x$ such that $(x,-cd)_p=\epsilon'_p$ for all primes $p$.
The existence of $x$ is given by \cite[III.Thm 4]{Serre}, from which we claim the following effective bound.

\textbf{Claim 2:} \emph{$x$ can be chosen so that $x\le D''_0~\!d^{6.5}$ for an absolute, effectively computable constant $D''_0$ depending only on $d$.}

Again following the notation of Serre, let $S$ be the set of primes that divide $2cd$ and let $T$ be the set of primes $p$ such that $\epsilon'_p=-1$.
By construction $S=\mathcal{P}$.

We first reduce in an effective manner from the general case in \cite[III.Thm 4]{Serre} to the special case that $S\cap T=\emptyset$.
Notice that for any prime $p$ dividing $-cd$ which does not divide $x_p$, if an even power of $p$ divides $cd$ then we have $(x_p,-cd)_p=(x_p,-1)_p=1$ and if an odd power of $p$ divides $cd$ then $(x_p,-cd)_p=(x_p,-p)_p$.
Therefore for any prime $p\in S$, we may choose $x_p$ as follows:
\begin{enumerate}
\item If $\epsilon'_p=1$, let $x_p=1$.
\item If $p=2$ and $\epsilon'_p=-1$, let $x_p=3$.
\item If $p$ is odd and $\epsilon'_p=-1$, let $x_p$ be the smallest prime quadratic non-residue modulo $p$.
\end{enumerate}
It is then clear that for such choices $(x_p,-cd)_p=\epsilon'_p$.
By standard approximation theorems, we find $x'\in\Z$ such that its image $x'_p$ in $\Q_p$ is in the same square class as $x_p$ for all $p\in S$.
Indeed, such an $x'$ is furnished by the Chinese remainder theorem (where for the dyadic prime, we work modulo $8$).
Since $S=\mathcal{P}$, $x'$ can therefore be chosen to be less than $8d$.
Now define $\epsilon_p''=(x,-cd)_p~\!\epsilon'_p$ and let $T'$ be the set of all primes $p$ such that $\epsilon''_p=-1$.
Notice now that $T'\cap S=\emptyset$ and $\prod_{\ell\in T'}\ell\le 8d$ by construction.
Hence the quantities
$$a=\prod_{\ell\in T'}\ell,\quad m=8\prod_{\substack{\ell\in S\\ \ell\neq 2}}\ell,$$
are relatively prime and both less than $8d$.
By an effective version of Linnik's theorem \cite{HeathBrown}, there is an absolute, effectively-computable constant $D'_0=D'_0(d)$ such that the smallest prime $q$ in the residue class of $a$ modulo $m$ is at most $D'_0 (8d)^{5.5}$.
We claim that $x=x'q$ then gives the desired $x$. 
Indeed, that it satisfies the requisite Hilbert symbol properties follows from \cite[III.Thm 4]{Serre} and moreover $$x\le 8dD'_0(8d)^{5.5}=D''_0d^{6.5},$$
completing Claim 2.

Putting these claims together, we see that the product
$$\alpha\beta\gamma=x^2c^2d\le (D''_0d^{6.5})^2(2d)^2d\le D_0d^{16}$$ for some absolute effectively computable constant $D_0$.
\end{proof}

We now estimate $D$ from the statement of Proposition \ref{prop:conjbound} by giving estimates on the $\Q$--isometry which takes $q_c\oplus q$ to $q_{6,1}$.
Given a rational number $s/t$ with $(s,t)=1$ and $t>0$, we will use the notation $\denom(s/t)=t$ in what follows where we define $\denom(0)=1$.
For any fixed $n\ge2$ and any quadratic form $g=\langle a_1,\dots,-a_n\rangle$, with $a_i$ positive integers, define
$$E_n(g)= 2\max_{1\le i\le n}|a_i|\left(3\sum_{i=1}^n|a_i|+3\right)^{n/2}.$$
We then have the following proposition.

\begin{prop}\label{prop:CongMat}
Let $a_1,\dots,a_n$ be positive integers and $g=\langle a_1,\dots,a_{n-1},-a_n\rangle$ be an integral quadratic form of signature $(n-1,1)$ for $n>1$ such that $g$ is $\Q$--isometric to $g_{n-1,1}=\langle 1,\dots,1,-1\rangle$.
If $A_g$ is the diagonal matrix representing $g$, then there exists a rational matrix $P\in\GL(n,\Q)$ such that $P^TA_gP$ is a matrix representing the integral diagonal form $\langle 1,b_2,\dots,-b_n\rangle$ of signature $(n-1,1)$, where each $b_i$ is a positive integer.
Moreover, $\lcm_{i,j}\{\denom(P_{ij})\}\le E_n(g)$.
\end{prop}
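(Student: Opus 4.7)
The plan is to construct $P$ column-by-column: first produce a single vector $v_1\in\Q^n$ with $g(v_1)=1$ whose entries have common denominator bounded by $E_n(g)$, and then extend $v_1$ to a $g$--orthogonal basis whose remaining columns lie in $\Z^n$. This way the denominator bound for $P$ comes entirely from the first column.

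To find $v_1$, I would apply Cassels' theorem on small integer zeros of isotropic rational quadratic forms to the $(n+1)$--variable form $g\oplus\langle-1\rangle=\langle a_1,\dots,a_n,-1\rangle$, which is isotropic because $g$ is $\Q$--isometric to $g_{n-1,1}$ and so represents $1$. This produces a nonzero integer vector $(c_1,\dots,c_n,d)$ with $\sum a_ic_i^2=d^2$ and $\max(|c_i|,|d|)\le(3\sum|a_i|+3)^{n/2}$. If $d\neq 0$, take $v_1=(c_1/d,\dots,c_n/d)$, so that $g(v_1)=1$ and $\denom(v_1)\le|d|$. If $d=0$, then $c=(c_1,\dots,c_n)$ is a nonzero isotropic vector of $g$, and since $g$ is nondegenerate it must have at least two nonzero coordinates; choose $j$ with $c_j\neq 0$ and set $v_1=sc+e_j$ with $s=(1-a_j)/(2a_jc_j)$. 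A direct computation gives $g(v_1)=2sa_jc_j+a_j=1$ and $\denom(v_1)\le 2|a_jc_j|\le 2\max|a_i|\,(3\sum|a_i|+3)^{n/2}=E_n(g)$. This second case is precisely what forces the factor $2\max|a_i|$ in the definition of $E_n(g)$.

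For the remaining columns, clear denominators to obtain $c'\in\Z^n$ proportional to $v_1$; after permuting coordinates I may assume $c'_1\neq 0$. The $n-1$ integer vectors $u_i=a_1c'_1\,e_i-a_ic'_i\,e_1$ for $i=2,\dots,n$ are easily checked to lie in $v_1^\perp$ and to span it. To make them pairwise $g$--orthogonal, apply a scaled Gram--Schmidt procedure with respect to $g$: at step $i$, multiply by $\prod_{j<i}g(\tilde u_j)$ so that every $\tilde u_i$ is a $\Z$--linear combination of integer vectors and hence remains in $\Z^n$. If some pivot $g(\tilde u_j)$ vanishes, reorder the remaining $u_k$'s so that the next vector chosen is non-isotropic for $g$; this is always possible because the restriction of $g$ to $v_1^\perp$ is nondegenerate. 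Setting $P=(v_1\mid\tilde u_2\mid\cdots\mid\tilde u_n)$ yields $P^TA_gP=\mathrm{diag}(1,b_2,\dots,b_n)$ with $b_i=g(\tilde u_i)\in\Z$, and Sylvester's law guarantees the signature is $(n-1,1)$. Since only the first column carries nontrivial denominators, $\lcm_{i,j}\{\denom(P_{ij})\}\le\denom(v_1)\le E_n(g)$.

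The main obstacle is the first step: producing an effective bound on the denominator of a rational solution to $g(v)=1$. Cassels' theorem suffices directly when its zero has last coordinate $d\neq 0$; the genuinely delicate case is $d=0$, in which an isotropic vector of $g$ must be converted into a representation of $1$ by the line construction, and this is exactly what is responsible for the factor $2\max|a_i|$ appearing in $E_n(g)$.
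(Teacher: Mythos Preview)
Your proof is correct and follows essentially the paper's approach: both apply Cassels' theorem to $g\oplus\langle-1\rangle$, handle the two cases $d\neq 0$ and $d=0$ identically (the latter being what produces the factor $2\max_i|a_i|$ in $E_n(g)$), and then complete $v_1$ to a $g$--orthogonal basis using only integer vectors so that every denominator of $P$ comes from the first column alone. The only difference is in the orthogonalization step: the paper takes an arbitrary integer basis of $v_1^\perp$, diagonalizes the resulting Gram matrix via the Jacobi method, and rescales columns to clear denominators, whereas you build an explicit integer basis of $v_1^\perp$ and run a scaled Gram--Schmidt. Both produce an integral transformation on columns $2,\dots,n$, so the denominator bound is the same.

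One small caveat: merely reordering the remaining $u_k$ need not avoid an isotropic pivot, since in principle all of their projections to the current orthogonal complement could be isotropic simultaneously. The fix is easy---replace one by an integer combination such as $u_j+u_k$, which is anisotropic whenever $g(u_j,u_k)\neq 0$, and such a pair exists because the restriction of $g$ to each successive orthogonal complement is nondegenerate. (The paper's Jacobi step has the analogous issue of possibly vanishing leading principal minors, which it likewise does not address explicitly.)
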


\begin{proof}
We first describe an algorithmic procedure for constructing $P$, then show that the $P_{ij}$ satisfy the requisite bound.
Our algorithm is the following:

\textbf{Step 1:} \emph{Changing the (1,1)--coefficient to $1$.} As $g$ is isometric to $g_{n-1,1}$, there is some $x\in\Q^n$ such that $g(x)=1$.
Let $v_1=x$ written as a column vector and let $v_2,\dots,v_n$ denote a basis for $v_1^\perp$ in $\Q^n$, where by clearing denominators and appropriately scaling we assume that for each $2\le i\le n$ that $v_i$ is in $\Z^n$ and that $v_i/\lambda\notin\Z^n$ for any natural number $\lambda>1$ (i.e. the $\gcd$ of all of $v_i$'s entries is $1$).
Defining
$$P_1=\begin{pmatrix}
\vrule&\vrule&&\vrule\\
v_1&v_2&\dots&v_n\\
\vrule&\vrule&&\vrule
\end{pmatrix},$$
it is clear that $P_1^TA_gP_1$ is a symmetric matrix with a $1$ as the $(1,1)$--entry.

\textbf{Step 2:} \emph{Diagonalize the resulting quadratic form.}
Let $g'$ be the quadratic form representing $Z=P_1^TA_gP_1$, then we use the Jacobi method to diagonalize $g'$.
To this end, let $Z_{k,k}$ denote the $k\times k$ minor of $Z$ which lies in the upper left corner and let $w_k$ be the $k\times 1$ vector which solves the system
\begin{equation}\label{eqn:Cramer}
Z_{kk}w_k=
\begin{pmatrix}z_{1,1}&\dots&\dots&z_{1,k}\\
\vdots&\ddots&&\vdots\\
\vdots&&\ddots&\vdots\\
z_{k,1}&\dots&\dots&z_{k,k}\end{pmatrix}
\begin{pmatrix}w_{1,k}\\
\vdots\\
w_{k-1,k}\\
w_{k,k}\end{pmatrix}
=
\begin{pmatrix}
0\\
\vdots\\
0\\
1\end{pmatrix}.
\end{equation}
The existence of such a $w_k$ follows from Cramer's rule.
Completing each $w_k$ to an $n\times 1$ vector by setting its last $n-k$ entries equal to $0$, we obtain a rational $n\times n$ upper triangular matrix
$$P_2=\begin{pmatrix}
\vrule&\vrule&&\vrule\\
w_1&w_2&\dots&w_n\\
\vrule&\vrule&&\vrule
\end{pmatrix}.$$
Write $c_i=\lcm_{1\le j\le n}\{\denom(w_{j,i})\}$, i.e. $c_i$ is the $\lcm$ of the denominators of the non-zero numbers in each column.
Defining
$$P_3=\begin{pmatrix}
c_1&&\\
&\ddots&\\
&&c_n\end{pmatrix},$$
yields an integral upper triangular matrix $P_2P_3$ such that $(P_2P_3)^TZP_2P_3$ represents the integral diagonal form $\langle 1,b_2,\dots,-b_n\rangle$ of signature $(n-1,1)$. Therefore setting $P=P_1P_2P_3$ gives a $\Q$--isometry such that $P^TA_gP$ represents $\langle 1,b_2,\dots,-b_n\rangle$.

As $P_2P_3$ is an integral matrix, to bound the denominators $\denom(P_{ij})$ it suffices to bound the coefficients coming from Step $1$. By construction, the entries of each $v_2,\dots,v_n$ are integral so it suffices to find a bound for the denominators in $v_1$. Thus, we must find a bound on the denominators of a solution to $g(x)=1$ for $x\in\Q^n$. For this, we use the following theorem of Cassels \cite{Cassels}, interpreted appropriately.

\begin{thm}[Cassels]\label{thm:Cassel}
Let $f=\langle k_1,\dots,k_m\rangle$ be an isotropic integral diagonal quadratic form in $m\ge 2$ variables, then there exists a non-trivial $y\in\mathbb{Z}^m$ such that $g(y)=0$ and
\begin{equation}\label{eqn:CasselBnd}
\max_{1\le i\le m}|y_i|\le \left(3\sum_{i=1}^m|k_i|\right)^{(m-1)/2}.
\end{equation}
\end{thm}

\noindent
Now let $\widetilde{g}$ be the augmented $(n+1)$--variable quadratic form $g\oplus \langle-1\rangle$.
As $\widetilde{g}$ is isotropic, Theorem \ref{thm:Cassel} produces a non-trivial $y\in\mathbb{Z}^{n+1}$ such that $\widetilde{g}(y)=0$ and such that $y$ satisfies the bound in Equation \eqref{eqn:CasselBnd} with $m=n+1$.
We now show how to use $y$ to produce $x$ in two cases.

\textbf{Case 1: $y_{n+1}\neq 0$.}
Then simply let
$x=(y_1/y_{n+1},\dots,y_n/y_{n+1})\in\Q^n,$
and clearly $g(x)=1$ by construction.

\textbf{Case 2: $y_{n+1}=0$.}
Then fix an index $i$ such that $y_i\neq 0$ and let $e_i=(0,\dots,0,1,0,\dots,0)$ with the $1$ in the $i$th spot.
Defining $x=e_i+\alpha\cdot (y_1,\dots,y_n)$ for $\alpha=(1-a_i)/2a_iy_i$, a routine computation shows that $g(x)=1$.

From this we claim that $\lcm_{i,j}\{\denom(P_{ij})\}\le E_n(g)$. Indeed, in the first case we have the bound
$$\lcm_i\{\denom(x_i)\mid x_i\neq 0\}\le y_{n+1}\le E_n(g),$$
and in the second case we have the bound
$$\lcm_i\{\denom(x_i)\mid x_i\neq 0\}\le 2a_iy_i\le E_n(g).$$
This therefore completes the proof.
\end{proof}

\noindent For any fixed $n\ge2$ and any quadratic form $g=\langle a_1,\dots,-a_n\rangle$, define 
$$F_n(g)= E_n(g)^{2n}n^{n/2}\prod_{k=1}^{n-1}E_n(g)^{2k+2}k^{k/2}.$$

\begin{cor}\label{cor:FLemma}
For given $g$ and $n$, the matrix $P$ constructed in Proposition \ref{prop:CongMat} has determinant bounded above by $F_n(g)$.
\end{cor}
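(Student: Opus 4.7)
The plan is to factor $\det(P) = \det(P_1)\det(P_2)\det(P_3)$ along the decomposition supplied by the proof of Proposition~\ref{prop:CongMat}, and bound each factor using Hadamard's inequality together with Cramer's rule. The product structure of $F_n(g)$ is very suggestive of this split: the leading factor $n^{n/2}E_n(g)^{2n}$ is exactly what Hadamard produces on an $n\times n$ matrix whose columns have entries bounded by $E_n(g)^2$ (namely $P_1$), while the product $\prod_{k=1}^{n-1} k^{k/2} E_n(g)^{2k+2}$ is precisely what Hadamard produces on the $k\times k$ principal minors $Z_{kk}$ of the Gram matrix $Z = P_1^T A_g P_1$ that governs the analysis of $P_2$ and $P_3$.

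For $|\det P_1|$, first I would note that the entries of $v_1$ are rationals of absolute value at most $E_n(g)$; this falls immediately out of the case analysis at the end of the proof of Proposition~\ref{prop:CongMat} applied to Cassels' integer solution $y$. The columns $v_2,\ldots,v_n$ form a primitive integer basis of the hyperplane $v_1^\perp$, which is cut out by the single linear equation $A_g v_1 \cdot x = 0$. After clearing denominators in $v_1$ this becomes an integer equation with coefficients bounded polynomially in $E_n(g)$, and a standard construction (via Hermite normal form, or a direct Cramer-style recipe) furnishes a primitive basis with entries also bounded polynomially in $E_n(g)$. Hadamard's inequality $|\det P_1| \le \prod_i \|v_i\|$ then produces $|\det P_1| \le n^{n/2} E_n(g)^{2n}$.

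Next, for $|\det P_2 \cdot \det P_3|$, I would apply Cramer's rule to each system $Z_{kk} w_k = e_k$. Writing $D_k = \det Z_{kk}$ with the convention $D_0 = 1$, Cramer's rule gives $w_{k,k} = D_{k-1}/D_k$, so the upper-triangularity of $P_2$ yields the telescoping identity $\det P_2 = \prod_{k=1}^n D_{k-1}/D_k = 1/D_n$. Simultaneously, each entry of $w_k$ is, up to sign, a $(k-1)\times(k-1)$ minor of $Z_{kk}$ divided by $D_k$, and so (after accounting for the denominators of $Z_{kk}$ itself, which are controlled by the denominators of $v_1$) the lcm $c_k$ divides $|D_k|$. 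Therefore $\det P_3 = \prod c_k \le \prod_{k=1}^n |D_k|$, and combining with $\det P_2 = 1/D_n$ cancels the top factor to leave
\[
|\det P_2 \cdot \det P_3| \le \prod_{k=1}^{n-1} |D_k|.
\]

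Finally, each $|D_k|$ is bounded by Hadamard's inequality applied to the $k\times k$ matrix $Z_{kk}$. Using $|z_{ij}| = |v_i^T A_g v_j| \le (\sum_\ell |a_\ell|) \max_\ell|v_{\ell,i}| \max_\ell|v_{\ell,j}|$ together with the inequality $\sum_\ell |a_\ell| \le E_n(g)^{2/n}$ (which one checks directly from the definition of $E_n$, using $|a_\ell|\ge 1$) and the entry bounds on the $v_i$ from the first step, one obtains $|z_{ij}| \le E_n(g)^{2+2/n}$, hence $|D_k| \le k^{k/2} E_n(g)^{(2+2/n)k} \le k^{k/2} E_n(g)^{2k+2}$ whenever $k \le n$. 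Multiplying this against the $|\det P_1|$ bound gives $|\det P| \le F_n(g)$. The main obstacle in the plan is the entry bound on the primitive integer basis $v_2,\ldots,v_n$ in the first step: it requires a separate careful construction, and is where essentially all of the delicate combinatorial bookkeeping in the proof lives.
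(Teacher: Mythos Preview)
Your approach is essentially the paper's: factor $\det P = \det P_1 \cdot \det(P_2 P_3)$, bound $\det P_1$ via Hadamard on columns, exploit the telescoping $\det P_2 = 1/D_n$ together with $\det P_3 = \prod_k c_k$ and a Cramer-based bound on each $c_k$, and finish with Hadamard on the principal minors $D_k=\det Z_{kk}$. The paper likewise glosses over (``clearly'') the entry bound on $v_2,\ldots,v_n$ that you flag as the main obstacle.

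The one place your sketch needs care is the claim that $c_k$ divides $|D_k|$: since $v_1$ has genuinely rational entries, $Z$ and hence $D_k$ are rational, not integral, so this divisibility does not literally hold and your product bound $|\det P_2\cdot\det P_3|\le\prod_{k=1}^{n-1}|D_k|$ is not justified as stated. The paper handles this by separating numerator and denominator, bounding $c_k \le E\cdot\mathrm{num}(\det Z_{kk})$ and $\mathrm{denom}(\det Z_{kk}) \le E$; the extra factor of $E^2$ per term that this introduces is exactly what produces the exponent $2k+2$ (rather than $2k$) in $F_n(g)$. Your alternate route to that same exponent via the entrywise bound $|z_{ij}|\le E_n(g)^{2+2/n}$ and $(2+2/n)k\le 2k+2$ is perfectly reasonable, but it does not by itself repair the $c_k$ step---you still need the numerator/denominator tracking (or an equivalent device) there.
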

\begin{proof}
As $g,n$ are fixed throughout the corollary, we set $E=E_n(g)$. First, $\det(P_1)$ is bounded above by $E^{2n} n^{n/2}$. Indeed, one can check that in the construction of $v_1$ from Proposition \ref{prop:CongMat} the numerator of each entry of $v_1$ is bounded above by $E^2$. Clearing denominators, each entry of $v_i$ can clearly be chosen to be bounded above by $E^2$ as well for $2\le i\le n$.
Hadamard's inequality then implies that $\det(P_1)\le E^{2n} n^{n/2}$.

As $P_2P_3$ is an upper triangular integral matrix, to bound $\det(P_2P_3)$ it suffices to give bounds on its diagonal coefficients.
Let $Z_{k,k}$ be the $k\times k$ minor of $Z=P_1^TA_gP_1$, $Z^{(j)}_{k,k}$ denote $Z_{k,k}$ with the $j$th column replaced by the column vector on the righthand side of Equation \eqref{eqn:Cramer}, and let $\mathrm{num}(\det(Z_{k,k}))$ denote the absolute value of the numerator of $\det(Z_{k,k})$.
Then the construction of $P_2$ using Cramer's rule gives that, for $k\ge 2$, each column vector $w_i$ has
$$c_k=\lcm_{j}\{\denom(w_{j,k})\}=\lcm_{j}\{\denom\left(\frac{\det(Z^{(j)}_{k,k})}{\det(Z_{k,k})}\right)\}\le E\cdot \mathrm{num}(\det(Z_{k,k})),$$
where we have used that $P_1$ and hence $Z_{k,k}$ and $Z^{(j)}_{k,k}$ have uniform denominator at most $E$.
Moreover, by construction, the $(k,k)$th diagonal coefficients of $P_2$ are given by $\det(Z_{k-1,k-1})/\det(Z_{k,k})$ where we use the convention that $Z_{0,0}=Z_{1,1}=1$.
Consequently
\begin{align*}
\det(P)&=\det(P_1)\det(P_2P_3) =\det(P_1)\prod_{k=1}^n\frac{\det(Z_{k-1,k-1})}{\det(Z_{k,k})}c_k,\\
&\le\det(P_1)\prod_{k=2}^n\frac{\det(Z_{k-1,k-1})\denom(\det(Z_{k,k}))}{\mathrm{num}(\det(Z_{k,k}))} \left(E\cdot \mathrm{num}(\det(Z_{k,k}))\right),\\
&\le E^{2n}n^{n/2}\prod_{k=2}^nE^2 \det( Z_{k-1,k-1}) \le E^{2n}n^{n/2}\prod_{k=1}^{n-1}E^{2k+2}k^{k/2} =F_n(g),
\end{align*}
where the last line is another application of Hadamard's inequality.
This completes the proof.
\end{proof}
\begin{cor}\label{cor:GLemma}
There is an explicit constant $G_n(g)$ depending only on $g$ and $n$, such that the coefficients $a_i$ and $b_i$ from Proposition \ref{prop:CongMat} differ by a factor of at most $G_n(g)$ for $2\le i\le n$.
\end{cor}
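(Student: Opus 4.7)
The plan is to exploit the identity $P^T A_g P = \mathrm{diag}(1, b_2, \dots, b_n)$ guaranteed by Proposition \ref{prop:CongMat}. Since $A_g$ is diagonal, this unpacks to the explicit formula
\[
b_k \;=\; \sum_{i=1}^n a_i\, P_{ik}^{\,2}
\]
for each $k$, so any entrywise bound on $P$ immediately yields an upper bound for $|b_k|$ in terms of the $|a_i|$. Because $a_k$ and $b_k$ are nonzero integers (the latter by the signature constraint $(n-1,1)$, which forces each $b_k$ to be nonzero), such an absolute bound on $|b_k|$, together with the absolute bound on $|a_k|$ built into $g$, translates directly into a two-sided bound on the ratio $|b_k/a_k|$.

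I would therefore produce an explicit entrywise bound $|P_{ik}| \le H_n(g)$ for the matrix $P = P_1 P_2 P_3$ constructed in Proposition \ref{prop:CongMat}. First, the columns of $P_1$ have numerators bounded by $E_n(g)^2$ with a common denominator dividing $E_n(g)$, as already computed in the proof of Corollary \ref{cor:FLemma}. Consequently each entry of $Z = P_1^T A_g P_1$ is a rational whose numerator is at most $n \cdot (\max_\ell |a_\ell|) \cdot E_n(g)^4$ with denominator dividing $E_n(g)^2$. Hadamard's inequality then gives explicit polynomial bounds on both $|\det(Z_{k,k})|$ and on the auxiliary determinants $|\det(Z_{k,k}^{(j)})|$ that appear in the Cramer's-rule expressions for the entries $w_{j,k}$ of $P_2$. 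Finally, the scaling factors $c_k$ defining $P_3$ are bounded by $E_n(g) \cdot \mathrm{num}(\det(Z_{k,k}))$, which is already controlled by the preceding step.

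Multiplying these bounds yields an explicit polynomial $H_n(g)$ in $E_n(g)$, $n$, and the $|a_i|$ which bounds every entry of $P$. Setting
\[
G_n(g) \;=\; n \cdot (\max_i |a_i|)^{2} \cdot H_n(g)^{2}
\]
(absorbing the factor $\max_i |a_i|$ to handle the other direction) then gives $|b_k| \le G_n(g) / \max_i |a_i|$ and hence both $|b_i/a_i| \le G_n(g)$ and $|a_i/b_i| \le G_n(g)$, as required.

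The main obstacle is not conceptual but bookkeeping: one must track numerators and denominators through the successive Jacobi-diagonalization steps to ensure the final constant $G_n(g)$ is genuinely explicit and of polynomial growth in $E_n(g)$, rather than merely asserted to exist. The $P_2$ factor is the delicate point, since the Cramer's-rule entries $w_{j,k} = \det(Z_{k,k}^{(j)})/\det(Z_{k,k})$ are ratios of large determinants, and one has to cancel these denominators against the scaling $c_k$ carefully so that the resulting bound on $P_{ik}$, and hence on $b_k$, does not blow up faster than polynomially in $E_n(g)$.
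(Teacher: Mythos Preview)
Your approach is correct but considerably more laborious than the paper's. The paper dispatches the corollary in a single line: since $a_i, b_i \in \Z$ for all $i$, one may take $G_n(g) = (F_n(g))^2$. The underlying argument is that taking determinants in $P^T A_g P = \mathrm{diag}(1,b_2,\dots,b_n)$ gives $\prod_{j=2}^n |b_j| = \det(P)^2 \cdot \prod_{i=1}^n |a_i|$; because each $|b_j|\ge 1$, any single $|b_i|$ is bounded by this entire product, and Corollary~\ref{cor:FLemma} already supplies $|\det(P)| \le F_n(g)$. Integrality likewise gives $|a_i/b_i| \le |a_i|$, which is absorbed into the constant.

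So the contrast is this: you re-enter the construction of $P = P_1 P_2 P_3$ to extract entrywise bounds and then feed them into the diagonal formula $b_k = \sum_i a_i P_{ik}^2$, whereas the paper simply recycles the determinant bound from the immediately preceding corollary together with the observation that nonzero integers have absolute value at least $1$. Your route would yield a somewhat different explicit constant and avoids the determinant identity, but at the cost of redoing the Cramer's-rule bookkeeping you flag as the ``main obstacle''; the paper's route sidesteps that bookkeeping entirely.
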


\begin{proof}
As $a_i, b_i\in\Z$ for all $i$, we can take $G_n(g)=(F_n(g))^2$.
\end{proof}

\begin{proof}[Proof of Proposition \ref{prop:conjbound}]
Fix the form $q_{6,1}:=\langle 1,1,1,1,1,1,-1\rangle$ on $\mathbb{Q}^7$.
If $q_c$ is any form such that $q_{6,1}$ is $\Q$--isometric to $q_c \oplus q$, then the existence of $D$ is immediate. To compute the upper bound on $D$, we will give upper bounds on the number $S=\lcm_{i,j}\{\denom(P_{ij})\}$, where $P=(P_{ij})$ is a rational matrix representing the isometry which takes $q_c\oplus q$ to $q_{6,1}$.
Given such an $S$, we immediately see that the integral congruence sublattice $L(S^2)=\oplus_{i=1}^7S^2\Z\subset\Z^7$ has the property that $P^TL(S^2)P\subset \Z^7$ and consequently the stabilizer of $L(S^2)$ is a congruence subgroup of $\SO(q_c\oplus q,\Z)$ which gets mapped to a subgroup of $\SO(q_{6,1},\Z)$.
By examining the orders of finite groups of Lie type (see for instance the tables of Ono \cite[Table 1]{Ono}), one can see that the index of such a congruence subgroup is bounded above by $D=S^{42}$.
Therefore finding a bound for $S$ will complete the proof.

To this end, note first by Lemma \ref{lemma:CompFormLemma} that $q_c$ may be chosen so that the product of its coefficients are bounded above by $D_0~\! d^{16}$, where $d=z_1z_2z_3z_4$ and $D_0$ is a constant depending only on $d$.
Writing $\omega=D_0~\! d^{17}$ we see that the coefficients of $q_c\oplus q$ are bounded above by $\omega$.

We now implement repeatedly the algorithm used in Proposition \ref{prop:CongMat} and the bounds in Corollaries \ref{cor:FLemma} and \ref{cor:GLemma} to construct our isometry $P$. For the first bound, we assume that none of the coefficients of $q_c\oplus q$ are $\pm 1$ at any stage of our algorithm, as otherwise Witt cancelation would allow us to improve our bounds. By Proposition \ref{prop:CongMat}, we get a matrix $P^{(1)}$ such that $(P^{(1)})^TA_{q_c\oplus q}P^{(1)}$ represents the diagonal form $g=\langle 1,b_2,\dots,b_6,-b_7\rangle$ of signature $(6,1)$ and with the properties that
$$\lcm_{i,j}\{\denom(P^{(1)}_{ij})\}\le D_1\omega^{4.5},$$
 $\det(P^{(1)})\le D_1'\omega^{306}$, and such that the absolute value of the product of the coefficients of $g$ is bounded above by $D_1''\omega^{612}$.
Running this process $5$ more times and keeping track of the changes in determinant, absolute value of the product of the coefficients, and total denominator change, we end up with a diagonal quadratic form $g'=\langle 1,1,1,1,1,1,-b'_7\rangle$ with $b'_7\in(\Q^*)^2$ and a matrix $P'$ such that $P'^TA_{g}P'=A_{g'}$, $\det(P')\le D_6~\!\omega^{3.101\cdot 10^{12}}$, $b'_7\le D'_6\omega^{6.202\cdot 10^{12}}$, and
$$\lcm_{i,j}\{\denom(P'_{ij})\}\le D''_6\omega^{3.82\cdot 10^{11}}.$$
Let $P''$ be the rational diagonal matrix $P''=\textrm{diag}(1,1,1,1,1,1,1/\sqrt{b_7})$ and let $P=P'P''$, then $P$ is a $\Q$-isometry taking $q_c\oplus q$ to $q_{6,1}$ with the property that
$$S=\lcm_{i,j}\{\denom(P_{ij})\}\le D_7\omega^{3.5\cdot 10^{12}}\le A d^{5.6\cdot 10^{13}},$$
for an absolute constant $A$, which combines the absolute parts of each of the $D_i$.
Therefore $D\le A~\!d^{2.4\cdot 10^{15}}$.

The second part is identical except that we now only need to run the process $5$ total times as opposed to $6$. A similar computation then gives the requisite bound of $S\le A~\! d^{ 1.02\cdot 10^{11}}$ and hence $D\le A~\! d^{ 4.25\cdot 10^{12}}$.
\end{proof}



\subsection{}\label{thebigkahuna}

We deduce the main result of this section and prove Corollary \ref{Larry} using Section \ref{sec:prasadvol}.

\begin{proof}[Proof of Theorem \ref{Sec:ArEx:T1}]
By Theorem \ref{prop:deltaconstruct}, there is $g\in\GL(4,\Q)$ and an integral quadratic form $q'$ such that
$$[g\Gamma g^{-1}:g\Gamma g^{-1} \cap \SO^+(q',\Z)] \leq C_\epsilon V^{\epsilon},$$
for any $\epsilon>0$, where $V$ is the volume of $\mathbb{H}^3/\Gamma$.
Moreover Equation \eqref{eqn:newform} shows that $q'$ is similar to an explicit integral quadratic form which has a $1$ for its first coefficient.
Notice from Equation \eqref{eqn:newform} that $|\disc(q'')|\le d^7$.
By Proposition \ref{prop:conjbound}, there exists a subgroup $\Delta \leq \SO^+(q',\Z)$ of index at most $D$, where $D$ is a constant that depends only on $q'$, such that $\Delta$ admits an injective homomorphism into $\SO^+(6,1;\Z)$.
Therefore taking intersections we conclude that there is a subgroup of index at most $(C_\epsilon D) V^{\epsilon}$ of $\Gamma$ that admits an injective homomorphism into $\SO^+(6,1;\Z)$.
For the explicit bounds on $C_\epsilon$ and $D$, Corollary \ref{cor:indexbound3} implies that $C_\epsilon\le 2^{\epsilon C'_\epsilon+2}11^2d_k^{A_1\omega(d_k)+3/2}$ and Proposition \ref{prop:deltaconstruct} applied to $q''$ gives that $D\le Ad^{2.975\cdot 10^{13}}$.

We note that the injective homomorphism of the finite index subgroup of $\Gamma$ into $\SO^+(6,1;\Z)$ induces a totally geodesic immersion of the associated arithmetic hyperbolic orbifolds.  As noted in the introduction, the second paragraph of Theorem \ref{Sec:ArEx:T1} now follows from \cite[Lem 3.4]{ALR}.
\end{proof}

\begin{cor}\label{Larry}
\LarryCor
\end{cor}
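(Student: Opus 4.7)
The plan is to combine Theorem \ref{prop:deltaconstruct} with \cite[Theorem 1.2]{Chu}. By \cite[Theorem 8.2.3]{MR}, the non-cocompact arithmetic lattices in $\PSL(2,\C)$ are exactly those commensurable with some Bianchi group $\PSL(2,\mathcal{O}_d)$, and via the isomorphism $\Phi$ of Subsection \ref{sec:prasadvol} each such $\Gamma$ corresponds to a lattice commensurable with $\SO^+(q,\Z)$ for an isotropic $\Q$--defined form $q$ of signature $(3,1)$. So Theorem \ref{prop:deltaconstruct} applies to $\Gamma$.

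First, I would apply Theorem \ref{prop:deltaconstruct} to produce a subgroup $\Delta \leq \Gamma$ of index at most $C_\epsilon\,\cvol(\Gamma)^\epsilon$ that, after conjugation in $\PSL(2,\C)$, lies inside a conjugate of $\PSL(2,\mathcal{O}_d)$. The identification is clean in this case because the quaternion algebra $\mathcal{A}$ underlying the Bianchi commensurability class is the matrix algebra $M_2(k)$, for which the standard maximal order $\mathcal{O}^{\mathrm{std}}$ can be chosen to be $M_2(\mathcal{O}_d)$ and $P((\mathcal{O}^{\mathrm{std}})^1) = \PSL(2,\mathcal{O}_d)$; the proof of Theorem \ref{prop:deltaconstruct} then literally places $\Delta$ inside $P((\mathcal{O}^{\mathrm{std}})^1)$ (after transport across $\Phi$).

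Second, I would invoke Chu's \cite[Theorem 1.2]{Chu}, which produces a C--special subgroup $\Delta_0$ of $\PSL(2,\mathcal{O}_d)$ whose index in $\PSL(2,\mathcal{O}_d)$ is at most $120$. I would then set $\Delta' = \Delta \cap \Delta_0$ (after the conjugation that places $\Delta$ inside $\PSL(2,\mathcal{O}_d)$). Since C--specialness is inherited by subgroups, $\Delta'$ is C--special, and
\[ [\Gamma : \Delta'] \;\leq\; [\Gamma:\Delta]\,\cdot\,[\PSL(2,\mathcal{O}_d):\Delta_0] \;\leq\; 120\,C_\epsilon\,\cvol(\Gamma)^\epsilon, \]
which is the desired bound.

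The main obstacle is a bookkeeping one: one must confirm that the several conjugations in $\GL(4,\Q)$ and in $\mathcal{A}^*$ appearing in the proof of Theorem \ref{prop:deltaconstruct} can be arranged so that $\Delta$ lands inside $\PSL(2,\mathcal{O}_d)$ itself, not merely inside a commensurable lattice, so that intersecting with $\Delta_0$ is meaningful. Once that is accomplished, the corollary is a short assembly of the two cited results, with no further estimates required beyond those already packaged into $C_\epsilon$.
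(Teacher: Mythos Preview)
Your proposal is correct and follows essentially the same approach as the paper: apply Theorem \ref{prop:deltaconstruct} (or rather, its proof via Corollary \ref{cor:indexbound3}) to land a subgroup of controlled index inside $\PSL(2,\mathcal{O}_d)$, then intersect with Chu's special subgroup of index at most $120$. The paper phrases this as ``combined with the proof of Theorem \ref{Sec:ArEx:T1}'' but the content is identical, and your observation that $\mathcal{O}^{\mathrm{std}}=M_2(\mathcal{O}_d)$ in the matrix-algebra case is exactly what makes the bookkeeping concern you flag a non-issue.
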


\begin{proof}[Proof of Corollary \ref{Larry}]
To deduce Corollary \ref{Larry}, note that using the results of \cite{Chu} one can circumvent the production of $q_c$ and a $\Q$--isometry $P$ for the Bianchi groups. Indeed \cite[Thm 1.2]{Chu} proves that $\PSL(2,\mathcal{O}_d)$ always contains a special subgroup of index at most $120$ and so the same is true of $\SO^+(q,\Z)$ where $q=\langle 1,1,1,-d\rangle$. Combined with the proof of Theorem \ref{Sec:ArEx:T1}, this shows that any $\Gamma$ commensurable with $\SO^+(q,\Z)$ contains a special subgroup $\Delta$ of index at most $120\, C_\epsilon\, V^\epsilon$ where $C_\epsilon$ depends only on the commensurability class of the Bianchi group $\SO^+(q,\Z)$.
\end{proof}


\subsection{}

For the reader's clarity, we give a couple of concrete examples of $q$ and complimentary form $q_c$, and describe explicitly $C_{1,\epsilon}$, $C_2$, $D$, and the explicit $\Q$--isometry taking $q_c\oplus q$ to $q_{6,1}$ in each case.
The first example is any lattice in the commensurability class of a specific Bianchi group, where the methods of Chu \cite{Chu} already give bounds for this group and its finite index subgroups.
This example is meant to exemplify that, though our bound extends to the entire commensurability class, if one uses the algorithm above then it is many orders of magnitude worse than the uniform bounds produced in \cite{Chu}.
The second example exhibits a new commensurability class to which our techniques apply that is not currently covered in the  literature.

\begin{example}
Let $q=\innp{1,1,1,-7}$ and $\Gamma$ be a fixed lattice commensurable with $\SO^+(q,\Z)$ of covolume at most $V$.
Notice that $\SO^+(q,\Z)$ is a Bianchi group and in particular $\Gamma$ is not cocompact.
As such, via $\Phi$, it is easy to see that the corresponding invariant trace field is $k=\Q(\sqrt{-7})$ and the invariant quaternion algebra is the matrix algebra $\mathcal{A}=(7,7)_{\Q(\sqrt{-7})}\cong\mathrm{M}(2,\Q(\sqrt{-7}))$. We now show how to compute $C_{1,\epsilon}$ when $\epsilon=1/2$ by expanding on each part of Equation \eqref{eqn:covoleqn}.

As $h_k=1$ and $k_\mathcal{A}$ is contained in the narrow class field, we have that $k_\mathcal{A}=k$.
Additionally $\mathcal{A}$ is a matrix algebra so $\Ram_f(\mathcal{A})=\emptyset$ and $r_f=0$.
We can therefore simplify Equation \eqref{eqn:covoleqn} to
$$\frac{7^{3/2}\zeta_k(2)}{8\pi^2}\displaystyle\prod_{\frak{p}\in S}\left(\frac{\Nr(\frak{p})+1}{2}\right)\le  V,$$
which implies that
$$\frac{1}{2^{|S|-2}}\prod_{\frak{p}\in S}\left(\Nr(\frak{p})+1\right)\le 9V.$$
As we are interested in bounding $|S|$ from above we assume that $|S|\ge 2$ which in particular implies that
\begin{equation}\label{eqn:Sprime}
\frac{1}{2^{|S|-2}}\prod_{\frak{p}\in S'}\left(\Nr(\frak{p})+1\right)\le V,
\end{equation}
where $S'$ denotes the set $S$ minus its two smallest norm primes. That implies that $|S'|=|S|-2$ and that no prime contained in $S'$ can divide $2$. As $3,5$ are inert in $k$, Equation \eqref{eqn:Sprime} yields the upper bound $|S|\le 1/2\log_2(V)+2$, which reduces Equation \eqref{eqn:index} to $[\Gamma_{S,\mathcal{O}}:P(\mathcal{E}^1)]\le 8V^{1/2}$. Consequently we may take $C_{1,\epsilon}=8$.

To compute $C_2$ and $D$, we first remark that it is clear from the proof of Proposition \ref{prop:indexbound2} that $C_2=1$ if there is only one conjugacy class of maximal orders, which is the case for our $\mathcal{A}$ since it is a matrix algebra over a PID \cite[Cor 2.2.10]{MR}.
Moreover a complimentary form for $q$ is $q_c=\innp{1,1,7}$ with corresponding $\Q$--isometry
$$P=\begin{pmatrix}
1 & 0 & 0 & 0 & 0 & 0 & 0 \\
0 & 1 & 0 & 0 & 0 & 0 & 0 \\
0 & 0 & 4/7 & 0 & 0 & 0 & 3/7 \\
0 & 0 & 0 & 1 & 0 & 0 & 0 \\
0 & 0 & 0 & 0 & 1 & 0 & 0 \\
0 & 0 & 0 & 0 & 0 & 1 &  \\
0 & 0 & -3/7 & 0 & 0 & 0 & -4/7
\end{pmatrix},$$
to $q_{6,1}$.
Therefore the congruence lattice $L(49)=(49\Z)^7$ of level $49$ is mapped into $\Z^7$ under the isometry $P$ and hence we may take $D\le 49^{42}$.
Putting this all together we see that there is a special subgroup $\Delta<\Gamma$ of index at most $8 (49)^{42} V^{1/2}$.
\end{example}

\begin{example}\label{ex:m306}
Let $\Gamma=\pi_1(M)$ be the fundamental group of the $5/1$ Dehn filling on the manifold $m306$ in SnapPy's closed manifold census \cite{SnapPy}.
We point out that briefly that the manifold $M$ is not fibered, indeed one can check using SnapPy that $M$ is a rational homology sphere.
Considering the upper half plane model of $\mathbb{H}^3$, using SnapPy one can check that $\Gamma$ is a $3$-generated group with holonomy representation given by
\begin{align*}
a&\mapsto\begin{pmatrix}
-\sqrt{1-i}&\frac{\sqrt{2}}{2}(1-i)\\
\frac{-\sqrt{2}}{2}(1-i)&-\sqrt{-1+i}
\end{pmatrix},\\
b&\mapsto\begin{pmatrix}
\sqrt{\frac{1}{2}(-7-5i+\sqrt{4+22i})}&-\sqrt{-4-2i}\\
i-\sqrt{-1+2i}&\sqrt{ \frac{1}{2}(-3+7i+\sqrt{20-10 i}) }
\end{pmatrix},\\
c&\mapsto\begin{pmatrix}
-\sqrt{\frac{1}{2} (-3 + 13 i - \sqrt{-76 + 2 i})}&\sqrt{-10 + 9 i - \sqrt{19 + 62 i}}\\
-\sqrt{-2 + 3 i + \sqrt{-5 - 10 i}}&-\sqrt{\frac{1}{2} (-11 + 5 i + \sqrt{-44 - 62 i})}
\end{pmatrix}.
\end{align*}
Using \cite[Lem 3.5.5]{MR} and \cite[Thm 3.6.1]{MR}, it is straightforward to check that the invariant trace field is $k=k\Gamma=\Q(i)$ and the invariant quaternion algebra has Hilbert symbol
$$\mathcal{A}=A\Gamma= (-8,-20-20i)_{\Q(i)}\cong (2,5)_{\Q(i)}.$$
As $\rho(\gamma)$ has traces which are algebraic integers for all $\gamma\in\Gamma$ (equivalently $\rho(\gamma^2)$ has traces in $\Z[i]$ for all $\gamma\in\Gamma$), \cite[Thm 8.3.2]{MR} shows that $\Gamma$ is in fact arithmetic and therefore under the isomorphism $\Phi$ given in Section \ref{sec:prasadvol}, the image of $\Gamma$ is a lattice commensurable with $\SO^+(q,\Z)$ where $q=\langle 1,2,5,-10\rangle$.
Note that $q$ is anisotropic and has non-trivial Hasse--Witt invariants at primes $2$ and $5$, from which one can see that $\SO^+(q,\Z)$ is not commensurable with any of the lattices contained in \cite{Chu}.

We now compute upper bounds for $C_{1,1} V$, $C_2$, and $D$ explicitly, where we have chosen $\epsilon=1$.
As $\Q(i)$ also has class number one, we again have that $k_\mathcal{A}=k$.
Moreover $|\Ram_f(\mathcal{A})|=2$ and each prime in $\Ram_f(\mathcal{A})$ has norm $5$, consequently Equation \eqref{eqn:covoleqn} simplifies to give 
$$\frac{4\zeta_k(2)}{\pi^2}\displaystyle\prod_{\frak{p}\in S}\left(\frac{\Nr(\frak{p})+1}{2}\right)\le  V.$$
Using SnapPy, one can compute that $\vol(M)=3.66386...$ which combined with the above gives that
$$\displaystyle\prod_{\frak{p}\in S}\left(\frac{\Nr(\frak{p})+1}{2}\right)\le \frac{\pi^2 V}{4\zeta_k(2)}=6.$$
By definition, $S$ must be disjoint from $\Ram_f(\mathcal{A})$ and combining this with an enumeration of the small norm primes in $\Q(i)$, we see that either $S=\emptyset$ or $|S|=1$.
Consequently, $C_{1,1}$ can be chosen so that $C_{1,1} V\le 16$ with $C_{1,1} V$ is as in Proposition \ref{prop:indexbound}.
Moreover, Magma \cite{Magma} shows that the number of conjugacy classes of maximal orders in $\mathcal{A}$, i.e. the type number of $\mathcal{A}$, is $1$ and hence $C_2$ from Proposition \ref{prop:indexbound2} is simply $1$.

To compute an upper bound for $D$ in this setting, note that one complementary form of $q$ is given by $q_c=\langle 2,5,10\rangle$ with corresponding $\Q$--isometry from $q_c\oplus q$ to $q_{6,1}$ given by
$$P=\begin{pmatrix}
1/5 & 0 & -3/10 & 3/4 & 0 & 1/10 & 9/20 \\
-1/5 & 0 & 0 & 0 & 0 & 2/5 & 0 \\
0 & 0 & -9/20 & 9/40 & 11/20 & 0 & 27/40 \\
0 & 1 & 0 & 0 & 0 & 0 & 0 \\
-3/5 & 0 & -1/10 & 1/4 & 0 & -3/10 & 3/20 \\
0 & 0 & -3/5 & 0 & 0 & 0 & 2/5 \\
0 & 0 & -11/20 & 11/40 & 9/20 & 0 & 33/40
\end{pmatrix},$$
Hence we have that $D\le (1600)^{42}$ and $\Gamma$ admits a special subgroup $\Delta$ of index at most $16(1600)^{42}$.
\end{example}


\section{Excluding group elements with right-angled polyhedra}\label{pplus}

We intend to apply our first main result, Theorem \ref{Sec:ArEx:T1}, to produce explicit bounds on the geodesic residual finiteness growth for closed manifolds that satisfy the hypotheses of that theorem. To accomplish this, in \ref{pprrooff} we extend the methods of Patel \cite{Patel} leveraging totally geodesic immersions into right-angled reflection orbifolds. But first, in \ref{geo vs rf}, we recall some definitions and justify an assertion from the introduction.


\subsection{}\label{geo vs rf} For a finitely generated residually finite group $\Gamma$, we define $\mathrm{D}_\Gamma(\gamma)$ to be the minimum of $[\Gamma:\Delta]$ such that $\gamma \notin \Delta$ and $\Delta < \Gamma$. When $\Gamma = \pi_1 M$ for a closed hyperbolic $n$--manifold $M$, two measurements of complexity for the elements of $\Gamma$ can be used to study the extremal behavior of $\mathrm{D}_\Gamma$. First, we have the geodesic length function $\ell(\gamma)$, and second, for a fixed finite generating subset $X$ of $\Gamma$, we have the associated word length $\norm{\gamma}_X$. These can be used to measure the growth rate of the function $\mathrm{D}_\Gamma$. Specifically, we can take the maximum of $\mathrm{D}_\Gamma$ on the finite subsets of $\Gamma$ of non-identity elements $\gamma$ with either $\ell(\gamma) \leq n$ or $\norm{\gamma}_X \leq n$, yielding the \textbf{geodesic residual finiteness growth function} $F_{M,\rho}(n)$ ($\rho$ being the complete hyperbolic metric on $M$) or \textbf{residual finiteness growth function} $F_{\Gamma,X}(n)$, respectively. (Cf.~\cite[\S 2.1]{BRHP} and the introduction to \cite{Patel}.)

Lemma 6.1 of \cite{Patel} asserts that when $M$ is closed, the functions $F_{M,\rho}(n)$ and $F_{\Gamma,X}(n)$ have the same \textbf{asymptotic growth rate}, meaning that there exist real numbers $c,d>0$ for which both
\[ F_{M,\rho}(n) \leq c F_{\Gamma,X}(cn) \quad\mbox{and}\quad F_{\Gamma,X}(n) \leq d F_{M,\rho}(dn). \]
(Note that it is an easy exercise from this definition to show that all linear functions $\mathbb{N}\to\mathbb{N}$ have the same asymptotic growth rate.) The key step in the proof of \cite[Lem 6.1]{Patel} lies in relating $\ell$ to the translation length function $\ell_p(\gamma) = d_{\mathrm{hyp}}(\gamma\cdot \tilde{p},\tilde{p})$ determined by a choice of $p \in M$ and $\tilde{p} \in \widetilde{M}$, since by the \v{S}varc--Milnor Lemma, $\Gamma$ equipped with the norm $\ell_p$ and $\Gamma$ equipped with the norm $\norm{\cdot}_X$ are quasi-isometric.

Theorem 1.1 of \cite{BRHP} asserts that $F_{A_{\Lambda},X}(n) \leq n+1$ for a right-angled Artin group $A_{\Lambda}$ determined by a simplicial graph $\Lambda$, where $X$ is the ``standard'' generating set for $A_{\Lambda}$ (with one generator for each vertex of $\Lambda$). Standard results on residual finiteness growth then imply that every \textbf{virtually special} group $\Gamma$, that is, one with a finite-index subgroup that quasi-isometrically embeds in a right-angled Artin group, has at most linear residual finiteness growth.  This holds in particular when $\Gamma = \pi_1 M$ for a closed hyperbolic $3$-manifold $M$, by \cite{Agol_vHaken}.  In this case it therefore follows from \cite[Lem 6.1]{Patel} that the \textit{geodesic} residual finiteness growth is also at most linear.

When we assert that the geodesic residual finiteness growth of $M$ is \textbf{at most linear}, we mean that there exists a linear function $L\co\mathbb{N}\to\mathbb{N}$ and $c>0$ such that $F_{M,\rho}(n)\leq cL(cn)$ for all $n$, or equivalently, that $F_{M,\rho}(n)\leq c^2L(n)$, since $L$ is linear. Chasing through the definitions, we thus find that there exists a potentially larger constant $K$ such that for every loxodromic element $\alpha\in\pi_1 M$ there is a subgroup $H$ of $\pi_1 M$ with $\alpha\notin H$ and
\[ [\pi_1 M:H] \leq K\,\ell(\alpha), \]
where $\ell(\alpha)$ is the length of the geodesic representative of $\alpha$ in $M$. That is, we obtain equation \eqref{haha}.

This establishes our assertions from the introduction. We emphasize again that the dependence of the constant $K$ on both the generating set $X$ and the minimal index of a special subgroup of $\pi_1 M$ make it difficult to \textit{explicitly} bound geodesic residual finiteness growth by using \cite{BRHP}, so our approach will be different.


\subsection{}\label{pprrooff} We now begin laying the groundwork for the proof of our second main result, Theorem \ref{red sauce}.  The tools that we add to the methods of \cite{Patel} allow us control the interactions between neighborhoods of the ideal points of a finite-volume right-angled polyhedron $P$ in $\mathbb{H}^n$ and a compact hyperbolic manifold immersed totally geodesically in the reflection orbifold determined by $P$.

\begin{lemma}\label{stabilizer} For a right-angled polyhedron $P\subset\mathbb{H}^{n+1}$, an ideal vertex $v$ of $P$, and a horoball $B$ centered at $v$ and embedded in $P$ (in the sense of Definition \ref{embedded}), if $\Gamma_P$ is the group of generated by reflections in the sides of $P$ then for $\gamma\in\Gamma_P$, $B\cap\gamma.B\neq\emptyset$ if and only if $\gamma$ lies in the stabilizer $\Gamma_P(v)$ of $v$ in $\Gamma_P$.\end{lemma}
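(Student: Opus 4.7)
The forward direction is immediate. Work in the upper half-space model with $v=\infty$, so that $B=\{z\geq h\}$ and the embeddedness condition becomes $h>\rho_{\max}$, where $\rho_{\max}$ is the maximum Euclidean radius of any hemispherical side of $P$ (i.e., any side not incident on $v$). The stabilizer $\Gamma_P(v)$ is generated by reflections in the sides of $P$ through $\infty$, which are vertical hyperplanes; each such reflection acts as a Euclidean isometry preserving the $z$-coordinate and hence $B$ setwise. So every $\gamma\in\Gamma_P(v)$ satisfies $\gamma.B=B$, whence $B\cap\gamma.B=B\neq\emptyset$.

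For the converse I would argue via a fundamental-domain principle, the crux being the claim that $B\cap P$ is a fundamental domain for the action of $\Gamma_P(v)$ on $B$. To see this, restrict to a horosphere $\{z=c\}$ with $c\geq h$: embeddedness of $B$ ensures that $P\cap\{z=c\}$ is bounded only by the traces of $P$'s vertical sides (no hemispherical side intervenes), and finite-volume of $P$ makes this cross-section compact. Right-angledness of $P$ makes the traces mutually orthogonal, so $\Gamma_P(v)$ acts on $\{z=c\}$ as a discrete Euclidean right-angled Coxeter reflection group with $P\cap\{z=c\}$ as compact fundamental chamber. The standard tiling theorem for such groups implies that the $\Gamma_P(v)$-translates of $P\cap\{z=c\}$ tile $\{z=c\}$; stacking over $c\geq h$ tiles $B$ by $\Gamma_P(v)$-translates of $B\cap P$.

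Granting this, pick $x\in B\cap\gamma.B$ generic so that $x$ lies in the interior of a unique $\Gamma_P$-translate of $P$; then $\gamma^{-1}.x\in B$ is also generic. By the fundamental-domain claim there exist $k_1,k_2\in\Gamma_P(v)$ with $k_2^{-1}.x$ and $(\gamma k_1)^{-1}.x=k_1^{-1}\gamma^{-1}.x$ both lying in $\mathrm{int}(B\cap P)\subset\mathrm{int}(P)$. Uniqueness of the $\Gamma_P$-translate carrying $x$ into $\mathrm{int}(P)$ forces $k_2=\gamma k_1$, so $\gamma=k_2k_1^{-1}\in\Gamma_P(v)$, as required. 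The main obstacle in this plan is verifying that the Euclidean tiling of each horosphere by $\Gamma_P(v)$ coincides with the tiling induced by the full $\Gamma_P$-tessellation of $\mathbb{H}^{n+1}$: any $g\in\Gamma_P\setminus\Gamma_P(v)$ for which $g.P$ has $v$ as an ideal vertex would contribute an additional cross-section to $\partial B$, which must have interior disjoint from every $\Gamma_P(v)$-translate of $\partial B\cap P$ (being a distinct polyhedron of the tessellation), and this is impossible once the $\Gamma_P(v)$-translates already cover $\partial B$. Hence no such $g$ exists, and the argument goes through.
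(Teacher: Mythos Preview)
Your proof is correct and follows essentially the same line as the paper's. Both arguments establish that the $\Gamma_P(v)$--translates of $B\cap P$ tile $B$ (the paper invokes the Euclidean Poincar\'e polyhedron theorem on $\partial B$, you invoke the standard Coxeter tiling on each horosphere $\{z=c\}$), and then use that $P$ is a fundamental domain for $\Gamma_P$ to force the desired coset relation. Your use of a generic $x$ cleanly sidesteps the boundary cases that the paper handles directly; the paper instead allows $x$ to land on a wall of $P$ and notes that the resulting element must then be a reflection in a side through $v$. One minor remark: the ``main obstacle'' you raise in the final paragraph is not actually needed for your argument. Once you know that $\Gamma_P(v)$--translates of $B\cap P$ tile $B$ and that $P$ is a fundamental domain for $\Gamma_P$, the genericity argument already forces $k_2=\gamma k_1$ without any further comparison of tilings; the statement you prove there (that no $g\in\Gamma_P\setminus\Gamma_P(v)$ has $v$ as an ideal vertex of $g.P$) is in fact a \emph{consequence} of the lemma rather than an ingredient in its proof.
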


\begin{proof}  Since $B$ is embedded in $P$, $P\cap\partial B$ is a right-angled polyhedron in $\partial B$, which inherits a Riemannian metric isometric to the Euclidean metric on $\mathbb{R}^{n}$ from $\mathbb{H}^{n+1}$.  Therefore by the Euclidean case of the Poincar\'e polyhedron theorem (see e.g. \cite[Thm 13.5.3]{Ratcliffe}), $\partial B$ is tiled by translates of $P\cap\partial B$ under the action of the group generated by reflections in its sides.  Each such reflection is the restriction to $\partial B$ of the reflection of $\mathbb{H}^n$ in a side of $P$ that contains $v$; in particular, in an element of $\Gamma_P(v)$.  It follows that:
\begin{align}\label{union} B \subset \bigcup \left\{\,\gamma.P:\gamma\in\Gamma_P(v)\,\right\}. \end{align}
Now suppose for some $\gamma\in\Gamma_P$ that $B\cap\gamma.B\ne\emptyset$, and let $x$ be a point in the intersection and $v' = \gamma.v$ be the ideal point of $\gamma.B$.  Applying the above to $B$ and $\gamma.B$ yields $\lambda_0\in\Gamma_P(v)$ and $\lambda_1\in \Gamma_P(v') = \gamma\Gamma_P(v)\gamma^{-1}$ such that $\lambda_0^{-1}.x$ and $\gamma^{-1}\lambda_1^{-1}.x$ lie in $B\cap P$.  Thus $\gamma^{-1}\lambda_1^{-1}\lambda_0$ takes $B\cap P$ to intersect itself.  As $P$ is a fundamental domain for $\Gamma_P$ and $B$ is embedded in $P$ this implies that $\gamma^{-1}\lambda_1^{-1}\lambda_0^{-1}$ is either the identity or the reflection in a side of $P$ containing $v$.  In any case it follows that $\gamma\in\Gamma_P(v)$, since $\lambda_1 = \gamma\lambda_1'\gamma^{-1}$ for some $\lambda_1'\in\Gamma_P(v)$.\end{proof}

Throughout the remainder of this section, we now fix the following standing assumptions which are the same as those in Theorem \ref{red sauce}. Let $n\geq 2$, let $P$ be a right-angled polyhedron in $\mathbb{H}^{n+1}$ with finite volume and at least one ideal vertex, let $\Gamma_P$ be the group generated by reflections in the sides of $P$, and let $\calb$ be a collection of horoballs, one for each ideal vertex of $P$, that are each embedded in the sense of Definition \ref{embedded} and pairwise non-overlapping. 
Moreover, fix a closed hyperbolic $m$--manifold $M$, for $m\leq n$, that admits a totally geodesic immersion $f\co M\to \mathbb{H}^{n+1}/\Gamma_P$.

Then $f$ lifts to a totally geodesic embedding $\tilde{f}$ from the universal cover $\widetilde{M}$ of $M$, which is isometric to $\mathbb{H}^m$, to an $m$--dimensional hyperplane of $\mathbb{H}^{n+1}$.  This map is equivariant with respect to the actions of $\pi_1 M$  and $f_*(\pi_1M)\subset\Gamma_P$ by covering transformations, and since $\tilde{f}$ is a lift its composition with the projection $\mathbb{H}^{n+1}\to\mathbb{H}^{n+1}/\Gamma_P$ equals the composition of the universal cover $\widetilde{M}\to M$ with $f$.

Below we will mostly just identify $\widetilde{M}$ with its image $\tilde{f}(\widetilde{M})$, a totally geodesic copy of $\mathbb{H}^m$ in $\mathbb{H}^{n+1}$, and likewise $\pi_1 M$ with $f_*(\pi_1 M)$, a subgroup of $\Gamma_P$ that stabilizes this copy of $\mathbb{H}^m$ and acts cocompactly on it. This holds except in the statement of the lemma below, where for clarity we highlight the role of the lift $\tilde{f}$.

\begin{lemma}  For each $B\in\calb$ and $\gamma\in\Gamma_P$, if $\gamma.B\cap\tilde{f}(\widetilde{M})$ is non-empty then $\tilde{f}^{-1}(\gamma.B)$ is a compact metric ball in $\widetilde{M}$ with radius $r_h$ satisfying $\cosh r_h = e^{h_{\gamma.B}}$, where $h_{\gamma.B}$ is the maximum, taken over all $x\in\gamma.B\cap\tilde{f}(\widetilde{M})$, of the distance from $x$ to $\partial B$.  The interior of $\tilde{f}^{-1}(B)$ embeds in $M$ under the universal cover $\widetilde{M}\to M$.\end{lemma}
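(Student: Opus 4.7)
The plan is to proceed in three stages. First, rule out the case where $\gamma.v$ (the ideal center of $\gamma.B$) lies on $\partial_\infty\mathbb{H}^m$, which is the only way the intersection could fail to be a compact metric ball. Then in the remaining case, identify the intersection as a metric ball by a symmetry argument in upper half-space coordinates, and verify the radius formula by direct computation. Finally, derive the embedding of the interior via Lemma~\ref{stabilizer} combined with freeness of the $\pi_1M$-action on $\mathbb{H}^m$.

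For the first stage, suppose toward contradiction that $\gamma.v\in\partial_\infty\mathbb{H}^m$ and fix a geodesic ray $r\co[0,\infty)\to\mathbb{H}^m$ with $r(t)\to\gamma.v$. Since $M$ is compact, the descent of $r$ to $M$ stays bounded, so its image under the totally geodesic immersion $f\co M\to\mathbb{H}^{n+1}/\Gamma_P$ lies in the compact set $f(M)$. But this image equals the quotient projection $p_{\Gamma_P}(r(t))$, and since $\gamma.v$ is a cusp point of $\Gamma_P$, the ray eventually enters every horoball at $\gamma.v$ contained in $\gamma.B$; by Lemma~\ref{stabilizer} applied to such deep horoballs, their projections form a basis of cusp neighborhoods of $\gamma.v$ in $\mathbb{H}^{n+1}/\Gamma_P$. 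Then $p_{\Gamma_P}(r(t))$ enters arbitrarily deep cusp neighborhoods, contradicting its confinement to $f(M)$.

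For the second stage, with $\gamma.v\notin\partial_\infty\mathbb{H}^m$ secured, drop the unique perpendicular $L$ from $\gamma.v$ to $\mathbb{H}^m$ with foot $x_0\in\mathbb{H}^m$. In upper half-space coordinates placing $\gamma.v=\infty$, the plane $\mathbb{H}^m$ appears as a Euclidean hemisphere $\Sigma$ centered on the boundary of some Euclidean radius $R$, with $x_0$ at its top, and $\gamma.B=\{t\ge c\}$. By rotational symmetry about the vertical axis through $x_0$, realized by elements of $\Isom(\mathbb{H}^{n+1})$ fixing $\infty$ and preserving $\Sigma$, the constant-height slices of $\Sigma$ are hyperbolic metric spheres in $\mathbb{H}^m$ centered at $x_0$; hence $\gamma.B\cap\mathbb{H}^m=\Sigma\cap\{t\ge c\}$ is a compact metric ball centered at $x_0$. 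Integrating the induced hyperbolic metric along a meridian of $\Sigma$ gives $\cosh r_h=R/c$, and since the vertical geodesic from $x_0$ down to the horosphere $\{t=c\}$ has length $\ln(R/c)$ and realizes the depth, we have $h_{\gamma.B}=\ln(R/c)$ and therefore $\cosh r_h=e^{h_{\gamma.B}}$.

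For the third stage, if $\alpha\in\pi_1M$ sends an interior point $p$ to a distinct interior point $p'$, then $\alpha.(\gamma.B)\cap\gamma.B\ne\emptyset$, and after conjugating by $\gamma^{-1}$, Lemma~\ref{stabilizer} gives $\gamma^{-1}\alpha\gamma\in\Gamma_P(v)$, so $\alpha$ fixes $\gamma.v$ at infinity. Because $\alpha$ also preserves $\mathbb{H}^m$ and $\gamma.v\notin\partial_\infty\mathbb{H}^m$, $\alpha$ preserves the unique perpendicular $L$ and hence fixes the singleton $L\cap\mathbb{H}^m=\{x_0\}$; freeness of the $\pi_1M$-action on $\mathbb{H}^m$ then forces $\alpha=1$, contradicting $p\ne p'$. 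I expect the first stage to be the main obstacle, since it alone requires coordinating the compactness of $M$, the totally geodesic immersion into $\mathbb{H}^{n+1}/\Gamma_P$, and the cusp structure at $\gamma.v$; the remaining arguments are largely mechanical once that case is ruled out.
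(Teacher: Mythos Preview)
Your proof is correct and handles all three parts of the lemma, but each stage follows a route different from the paper's. For the first part, the paper argues that if $\partial_\infty\mathbb{H}^m$ contained an ideal vertex of some $\Gamma_P$--translate of $P$, then the induced tiling of $\mathbb{H}^m$ by intersections with such translates would have a non-compact tile, contradicting cocompactness of the $\pi_1 M$--action; you instead push a geodesic ray in $\mathbb{H}^m$ into arbitrarily deep cusp neighborhoods of the quotient and invoke compactness of $f(M)$. For the radius formula, the paper works in the Poincar\'e ball model with $\mathbb{H}^m = \mathbb{D}^m\times\{\mathbf{0}\}$ and the ideal point at the north pole, computing everything via the explicit distance formula in $\mathbb{D}^{n+1}$, whereas you place the ideal point at infinity in upper half-space and exploit rotational symmetry of the hemisphere $\Sigma$ about the vertical axis through $x_0$. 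For the embedding, the paper notes that any $\lambda\in\pi_1 M$ sending $\gamma.B$ to overlap itself lies in $\Gamma_P(\gamma.v)$ and is therefore parabolic, which is impossible in a cocompact group; you instead observe that such $\lambda$ fixes $\gamma.v$ and preserves $\mathbb{H}^m$, hence fixes the foot $x_0$ of the perpendicular, contradicting freeness of the $\pi_1 M$--action. All three of your alternatives are valid; the paper's arguments in stages one and three are somewhat more direct, while your upper half-space computation in stage two is arguably cleaner than the paper's ball-model calculation (which must separately verify that the ``lowest point'' of $B$ realizes the closest-point distance and that the center of the disk realizes the depth).
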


\begin{proof} In the proof we work exclusively in $\mathbb{H}^{n+1}$: we identify $\widetilde{M}$ with $\tilde{f}(\widetilde{M})\subset\mathbb{H}^{n+1}$ and call it $\mathbb{H}^m$; we identify $\pi_1 M$ with $f_*(\pi_1 M)\subset \Gamma_P$, stabilizing $\mathbb{H}^m$; and, for any $\gamma\in\Gamma_P$, we identify $\tilde{f}^{-1}(\gamma.B)$ with $\gamma.B\cap\mathbb{H}^m$. 

The boundary at infinity of $\mathbb{H}^m$ does not contain an ideal point of any $\Gamma_P$-translate of $P$: if it did then $\pi_1M$, which acts preserving the tiling of $\mathbb{H}^m$ by its intersection with such translates, would have a non-compact fundamental domain, contradicting cocompactness.  Since the horoballs $\gamma.B\cap\mathbb{H}^m$ are each centered at such points, for each such $\gamma$, $\mathbb{H}^m$ does not contain the ideal point of $\gamma.B$.

Suppose now that $\mathbb{H}^m$ does intersect $\gamma.B$ for some $\gamma\in\Gamma_P$.  Lemma \ref{stabilizer} implies that for any $\lambda\in\pi_1 M$ that takes $\gamma.B$ to overlap with itself, $\lambda$ lies in the stabilizer $\Gamma_P(\gamma.v)$ of the ideal point $\gamma.v$ of $\gamma.B$.  But all such elements are parabolic, and $\pi_1 M$ has no parabolic elements since it acts cocompactly.  It follows that the interior of $\gamma.B\cap\mathbb{H}^m$ embeds in $M$ under the universal cover.

Working in the Poincar\'e ball model $\mathbb{D}^{n+1}$ for $\mathbb{H}^{n+1}$, we translate $\mathbb{H}^m$ and $\gamma.B$ by isometries so that $\mathbb{H}^m = \mathbb{D}^m\times\{\mathbf{0}\}$ and the ideal point of $\gamma.B$ is at $(0,\hdots,0,1)$.  Then $\gamma.B$ is a Euclidean ball with radius $r\in[1/2,1)$ and Euclidean center $(0,\hdots,0,1-r)$.  By the Pythagorean theorem, $\gamma.B$ therefore intersects $\mathbb{D}^m\times\{\mathbf{0}\}$ in a Euclidean ball of radius $\sqrt{2r-1}$ in $\mathbb{D}^m$, centered at $\mathbf{0}$.  We now recall the formula for the hyperbolic distance $d$ in $\mathbb{D}^n$ (see eg.~\cite[Thm 4.5.1]{Ratcliffe}):\[
	\cosh d(\bx,\by) = 1 + \frac{2|\bx-\by|^2}{(1-|\bx|^2)(1-|\by|^2)}, \]
where $|\cdot|$ is the Euclidean norm.  Therefore the hyperbolic radius $r_h$ of the ball of intersection satisfies $\cosh r_h = r/(1-r)$.  On the other hand, some manipulation shows that the hyperbolic distance $h$ from $\mathbf{0}$ to the lowest point $(0,\hdots,0,1-2r)$ of $B$ satisfies $e^h = r/(1-r) = \cosh r_h$.  And this is the closest point of $\partial B$ to $\mathbf{0}$, since the formula above gives $\cosh d(\mathbf{0},\by) = 1 + 2|\by|^2/(1-|\by|^2)$ for any $\by\in\partial B$.  This increases with $|\by|^2$, which in turn increases with $y_n$, as can be discerned by rearranging the equation $|\by-(0,\hdots,0,1-r)|^2=r^2$ to $|\by|^2 =2r-1 + 2(1-r)y_n$. Given any $\bx = (\bx_0,0)\in\mathbb{H}^m\times\{\mathbf{0}\}$, there is a unique point $\by = (\bx_0,y)\in \partial B$ ``directly below $\bx$'', that is, with $y<0$.  A direct computation now shows that the distance from $\bx$ to $\by$ decreases with $|\bx|^2$, so $\mathbf{0}$ is the furthest point of $\mathbb{H}^m\cap B$ from $\partial B$ and the lemma is proved.\end{proof}

For a horoball $B$ of $\mathbb{H}^n$ and a totally geodesic hyperplane $\mathbb{H}^m \subset\mathbb{H}^n$ that is not incident on the ideal point $v$ of $B$, define the \textit{height} of $\mathbb{H}^m$ with respect to $B$ to be the maximal signed distance from points of $\mathbb{H}^m$ to $\partial B$, where the sign is non-negative for points of $\mathbb{H}^m\cap B$. See below, which pictures two qualitatively different horoball-hyperplane interactions in the upper half-plane model when $m=n=1$.

\begin{figure}[ht]
\begin{tikzpicture}

\begin{scope}[xshift=-1.5in]

\draw (-2,0) -- (2,0);
\draw [thick] (1.2,0) arc (0:180:1.2);
\fill [opacity=0.1] (-2,0.8) -- (2,0.8) -- (2,2) -- (-2,2);
\draw (-2,0.8) -- (2,0.8);

\node [above] at (2,0) {$\partial\mathbb{H}^{n+1}$};
\node [above] at (1.75,0.8) {$\partial B$};
\node at (-1.7,1.5) {$B$};
\node at (-0.8,0.4) {\small $\mathbb{H}^m$};

\draw (0,0.85) -- (0,1.15);
\draw [<-] (0.05,1) -- (0.8,1.5);
\node [above] at (1.1,1.45) {height $> 0$};

\end{scope}

\begin{scope}[xshift=1.5in]

\draw (-2,0) -- (2,0);
\draw [thick] (0.8,0) arc (0:180:0.8);
\fill [opacity=0.1] (-2,1.2) -- (2,1.2) -- (2,2) -- (-2,2);
\draw (-2,1.2) -- (2,1.2);

\node [above] at (2,0) {$\partial\mathbb{H}^{n+1}$};
\node [above] at (1.75,0.75) {$\partial B$};
\node at (-1.7,1.5) {$B$};
\node at (-0.9,0.5) {\small $\mathbb{H}^m$};

\draw (0,0.85) -- (0,1.15);
\draw [<-] (0.05,1) -- (0.8,1.5);
\node [above] at (1.1,1.45) {height $< 0$};

\end{scope}

\end{tikzpicture}
\end{figure}

\begin{cor}\label{embedded ball}  For any $B\in\calb$ and $\gamma\in\Gamma_P$, the height $h_{\gamma.B}$ of $\mathbb{H}^m$ with respect to $\gamma.B$ satisfies $e^{h_{\gamma.B}}\leq \cosh r_{\max}$, where $r_{\max}$ is the maximal radius of a ball embedded in $M$.\end{cor}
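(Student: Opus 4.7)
The plan is to argue by a short dichotomy on whether $\gamma.B$ meets $\mathbb{H}^m$, leveraging the previous lemma as the main input.

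First, suppose $\gamma.B \cap \mathbb{H}^m = \emptyset$. Then every point of $\mathbb{H}^m$ lies outside $\gamma.B$, so under the sign convention given just before the corollary the signed distance to $\partial(\gamma.B)$ is non-positive on $\mathbb{H}^m$. Hence $h_{\gamma.B} \leq 0$ and $e^{h_{\gamma.B}} \leq 1 \leq \cosh r_{\max}$, which is the desired inequality since $r_{\max} \geq 0$.

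Now suppose $\gamma.B \cap \mathbb{H}^m \neq \emptyset$. The preceding lemma then supplies two pieces of information simultaneously: (i) this intersection is a compact metric ball in $\mathbb{H}^m$ of hyperbolic radius $r_h$ satisfying $\cosh r_h = e^{h_{\gamma.B}}$, and (ii) the interior of that ball embeds in $M$ under the universal cover $\mathbb{H}^m \to \mathbb{H}^m/\pi_1 M = M$. Property (ii) means that $r_h$ is at most the radius of \emph{some} embedded ball in $M$, and hence $r_h \leq r_{\max}$ by definition of $r_{\max}$. Combining with (i), and using that $\cosh$ is monotone increasing on $[0,\infty)$, gives
\[ e^{h_{\gamma.B}} = \cosh r_h \leq \cosh r_{\max}, \]
which is the asserted bound.

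No serious obstacle is anticipated: the previous lemma has already done all the geometric work of identifying $\gamma.B\cap\mathbb{H}^m$ as a round ball whose radius encodes the height $h_{\gamma.B}$ and of verifying that its interior injects into $M$. The only subtlety worth being explicit about is the sign convention in the definition of height, which makes the empty-intersection case trivial rather than vacuous; otherwise the corollary is essentially a one-line consequence of monotonicity of $\cosh$ applied to the inequality $r_h \leq r_{\max}$.
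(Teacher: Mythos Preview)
Your argument is correct and is exactly the intended one: the paper states this as a corollary with no proof, so the expected reasoning is precisely the dichotomy you give, with the non-empty case read off from the preceding lemma via $r_h\leq r_{\max}$ and the empty case handled by the sign convention on height.
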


Below, for a fixed right-angled polyhedron $P\subset\mathbb{H}^{n+1}$ we call the \textit{convexification} of a set $\mathcal{K}\subset\mathbb{H}^{n+1}$ the $P$--convexification from \cite[Defn 2.1]{Patel}: it is the minimal convex union of $\Gamma_P$--translates of $P$ containing $\mathcal{K}$.

\begin{lemma}\label{closest point} For any $\alpha\in \pi_1 M - \{ \mathrm{Id}_{\pi_1 M}\}$, let $\tilde{\alpha}$ be the geodesic axis in $\mathbb{H}^{n+1}$ of $f_*(\alpha)$.  Any polyhedron $P_i$ in the convexification of $\tilde{\alpha}$ intersects the $R$-neighborhood of $\tilde{\alpha}$, where $R = \ln(\sqrt{n+1}+\sqrt{n})$.\end{lemma}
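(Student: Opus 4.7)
The plan is to analyze the closest point of $P_i$ to $\tilde\alpha$, use the description of $\calc$ as an intersection of tile-hyperplane half-spaces to force $\tilde\alpha$ to cross certain mutually perpendicular hyperplanes through that point, and then apply a hyperbolic trigonometric estimate. Concretely, by compactness of $P_i$ choose $x \in P_i$ and $y \in \tilde\alpha$ with $d := d(x,y) = d(P_i,\tilde\alpha)$; we may assume $d > 0$, so $x$ lies on $\partial P_i$ in the relative interior of some face $F$ of dimension $k < n+1$. Right-angledness of $P_i$ implies that $F$ is the intersection of exactly $m := n+1-k$ codimension-one faces of $P_i$, whose extending geodesic hyperplanes $H_1,\ldots,H_m$ are pairwise perpendicular at $x$ with outward unit normals $\vec n_1,\ldots,\vec n_m \in T_x\mathbb{H}^{n+1}$. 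By the first-order optimality condition for a closest point, the unit tangent $\vec v$ from $x$ toward $y$ lies in the outward normal cone at $x$, so $\vec v = \sum_j c_j \vec n_j$ with $c_j \ge 0$ and $\sum_j c_j^2 = 1$.

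The crucial geometric step is to show that $\tilde\alpha$ crosses every $H_j$ with $c_j > 0$. For this I will use the description of $\calc$ as the intersection of all closed half-spaces bounded by tiling hyperplanes that contain $\tilde\alpha$; each such half-space is itself a union of tiles (since no tiling hyperplane cuts through a tile), so the intersection is indeed a convex union of tiles, and it is manifestly the minimal one containing $\tilde\alpha$. If $\tilde\alpha$ lay entirely in the closed half-space of $H_j$ opposite the interior of $P_i$, then this half-space would appear in the intersection, forcing $\calc$, and in particular $P_i$, to lie there too, contradicting the fact that the interior of $P_i$ lies on the other side. Hence $\tilde\alpha$ meets the half-space containing the interior of $P_i$; combined with $y$ strictly on the opposite side of $H_j$ (since $c_j > 0$), continuity of $\tilde\alpha$ provides a crossing.

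Finally I will carry out the trigonometric estimate in the hyperboloid model. Placing $x = e_0$ and $\vec n_j = e_j$, we have $y = \cosh(d)\,e_0 + \sinh(d)\sum_j c_j e_j$. Any unit tangent $\vec w$ to $\tilde\alpha$ at $y$ satisfying $\langle\vec w,y\rangle = 0$ and Lorentz-perpendicularity to the parallel transport of $\vec v$ along $\overline{xy}$ is forced to have vanishing time component and $\sum_j c_j w_j = 0$. Parameterising $\tilde\alpha(t) = \cosh(t)\,y + \sinh(t)\,\vec w$, the condition $\tilde\alpha \cap H_j \ne \emptyset$ with $H_j = \{z_j = 0\}$ becomes $|w_j| \ge \sinh(d)\,c_j$ for each $j$ with $c_j > 0$. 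Squaring and summing these inequalities, then using $\sum_j c_j^2 = 1$ and $\sum_j w_j^2 = 1$, yields $\sinh^2(d) \le 1$. Since $\sinh R = \sqrt n \ge 1$ for $n \ge 1$, this gives $d \le \sinh^{-1}(1) \le R$, as required. I expect the most delicate point to be the half-space characterization of $\calc$ underlying the minimality argument; the subsequent trigonometric calculation is routine in the hyperboloid model once the perpendicularity constraints on $\vec w$ are unpacked.
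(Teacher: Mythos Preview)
Your argument is correct and takes a genuinely different route from the paper's proof. The paper, following \cite{Patel}, places the nearest point of $P_i$ at the origin in the ball model and computes the in-radius of the all-right spherical simplex cut out on $\mathbb{S}^n$ by the side-hyperplanes through that point: if $d>R$ then the codimension-one hyperplane $j\supset\tilde\alpha$ perpendicular to the shortest segment has boundary sphere of angular radius below this in-radius, so some coordinate hyperplane misses $j$ entirely and therefore separates $\tilde\alpha$ from $P_i$. Your approach instead works in the hyperboloid model, reads off the orthogonality constraints $w_0=0$ and $\sum_j c_j w_j=0$ on the unit tangent $\vec w$ to $\tilde\alpha$ at $y$, and converts the requirement that $\tilde\alpha$ cross each wall $H_j$ with $c_j>0$ into the inequalities $|w_j|>\sinh(d)\,c_j$; the sum-of-squares step then forces $\sinh^2 d<\sum_j w_j^2\le 1$. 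This actually yields a sharper, dimension-independent bound $d<\sinh^{-1}(1)=\ln(1+\sqrt 2)\le R$, with equality only when $n=1$. The reason the paper's bound is weaker for $n\ge 2$ is that it passes to the full hyperplane $j$ containing $\tilde\alpha$ rather than exploiting that $\tilde\alpha$ is one-dimensional; your coordinate inequalities use precisely this extra rigidity.

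One small repair: you invoke ``compactness of $P_i$'' to produce the minimizing pair $(x,y)$, but under the hypotheses of Theorem~\ref{red sauce} the polyhedron $P$ has at least one ideal vertex, so $P_i$ is not compact. The infimum is nonetheless attained: as noted just before Corollary~\ref{embedded ball}, the boundary at infinity of $\mathbb{H}^m$ contains no ideal point of any $\Gamma_P$--translate of $P$, hence in particular the endpoints of $\tilde\alpha$ are not ideal vertices of $P_i$, so $d(\cdot,\tilde\alpha)\to\infty$ near each ideal vertex and one may restrict to a compact truncation of $P_i$.
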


\begin{proof} The proof follows the strategy of Lemmas 3.1 and 4.2 of \cite{Patel}, which respectively establish the cases $n=2$ and $n=3$ (i.e. where $P$ is $3$-- or $4$--dimensional). We point the readers to Figures 1-5 in that paper for the geometric intuition behind this argument. As in those proofs we work in the ball model $\mathbb{D}^{n+1}$ for $\mathbb{H}^{n+1}$ and fix a $\Gamma_P$--translate of $P$ (which we will again just call $P$) that does not intersect the $R$--neighborhood of $\tilde{\alpha}$.  The goal is to show that $\tilde\alpha$ and $P$ are on opposite sides of a hyperplane containing one of the faces of $P$, from which it follows that $P$ is not in the convexification.

We suppose first that the closest point of $P$ to $\tilde\alpha$ is a vertex $e$, and move the entire picture by isometries so that $e$ lies at the origin.  The sides of $P$ that contain $e$ are contained in totally geodesic hyperplanes, each of which is the intersection of a Euclidean hyperplane with $\mathbb{D}^{n+1}$ since it contains the origin.  Their intersections with $\mathbb{S}^n$ divide it into right-angled spherical simplices.  The key computation here is the in-radius of such a simplex; that is, the minimum radius of a metric sphere in $\mathbb{S}^n$ that intersects every hyperplane. 

\begin{claim} An all-right simplex in $\mathbb{S}^n$ has in-radius $\theta=\cos^{-1}\left(\frac{\sqrt{n}}{\sqrt{n+1}}\right)$.\end{claim}

Deferring the claim's proof for the moment, we describe its application to our situation following \cite[Lem 3.1]{Patel}.  Let $j$ be the geodesic hyperplane containing $\tilde\alpha$ that is perpendicular to the arc $\overline{0y}$ from $e$ (which we have moved to $0$) to the closest point $y$ to $e$ on $\alpha$.  The fact that $d(e,\tilde\alpha) > R$ for $R = \ln(\sqrt{n+1}+\sqrt{n})$ ensures that $j$ intersects $\partial\mathbb{D}^{n+1} = \mathbb{S}^n$ in a sphere of radius (in the spherical metric) less than $\cos^{-1}\left(\frac{\sqrt{n}}{\sqrt{n+1}}\right)$, by a calculation entirely analogous to the one spanning pp.~93--94 of \cite{Patel}.  In particular, the ``cross sectional view'' of Figure 3 there still holds (the cross section just has higher codimension).  This sphere is therefore disjoint from the intersection with $\mathbb{S}^n$ of at least one hyperplane containing a side of $P$ that contains $e$. It follows as in \cite{Patel} that this hyperplane separates $\tilde\alpha$ from $P$.

\begin{proof}[Proof of claim] After applying a sequence of orthogonal transformations we may take the given hyperplanes to be the intersections with $\mathbb{S}^n$ of the coordinate planes in $\mathbb{R}^{n+1}$: apply an orthogonal transformation that moves the first hyperplane's normal vector to $\be_1$, then apply an orthogonal transformation of $\be_1^{\perp}$ that moves the second hyperplane's normal vector to $\be_2^{\perp}$, etc.  The coordinate hyperplanes divide $\mathbb{S}^n$ into right-angled simplices, each with the property that for any two of its points, the $i^{\mathrm{th}}$ entry of the first has the same sign as the $i^{\mathrm{th}}$ entry of the second for each $i\in\{1,\hdots,n+1\}$.  We restrict our attention to the simplex $\sigma_n$ consisting of points with all entries non-negative, noting that any of the others is isometric to $\sigma_n$ by a map which simply multiplies each entry by $\pm 1$.

Note that the symmetric group $S_{n+1}$ acts isometrically on $\mathbb{S}^n$ by permuting entries, preserving $\sigma_n$ and acting transitively on its set of faces of dimension $k$, for any fixed $k<n$.  The barycenter of $\sigma_n$, the sole global fixed point in $\sigma_n$ of this action, is $\bv_n = \frac{1}{\sqrt{n+1}}(1,\hdots,1)$.  Similarly call $\bv_k$ the barycenter of $\sigma_k\subset \mathbb{S}^k$ for each $k<n$.  Upon including $\sigma_k$ in $\sigma_n$ by the map $\mathbb{R}^{k+1}\to\mathbb{R}^{k+1}\times\{\mathbf{0}\}\subset\mathbb{R}^n$, we directly compute the spherical distance $d(\bv_n,\bv_k)$ from $\bv_n$ to $\bv_k$ via:
\[ \cos d(\bv_n,\bv_k) = \left[\frac{1}{\sqrt{n+1}}(1,\hdots,1)\right]\cdot \left[\frac{1}{\sqrt{k+1}}(\overbrace{1,\hdots,1}^{k+1},0,\hdots,0)\right]  = \frac{\sqrt{k+1}}{\sqrt{n+1}} \]
It is straightforward to prove that $\bv_k$ is the closest point of $\sigma_k$ to $\sigma_n$.  For each $\bx = (x_1,\hdots,x_{k+1},0,\hdots,0)\in \sigma_k$, $\bx\cdot\bv_n = \bx\cdot\pi(\bv_n)$, where $\pi(\bv_n) = \frac{1}{\sqrt{n+1}}(1,\hdots,1,0,\hdots,0)$ is the projection of $\bv_n$ to $\mathbb{R}^{k+1}\times\{\mathbf{0}\}$.  The Cauchy--Schwarz inequality asserts that $\bx\cdot\pi(\bv_n)\leq \|\bx\|\|\pi(\bv_n)\| = \frac{\sqrt{k+1}}{\sqrt{n+1}}$, with equality holding if and only if $\bx$ is a scalar multiple of $\pi(\bv_n)$. Since the inverse cosine is a decreasing function, the assertion follows.

We note in particular that $d(\bv_n,\bv_k)$ decreases with $k$.  So the closest points to $\bv_n$ on $\partial\sigma_n$, which is a union of $S_{n+1}$--translates of $\sigma_{n-1}$, are the $S_{n+1}$--translates of $\bv_{n-1}$.  Therefore the metric sphere of radius $\cos^{-1}\left(\frac{\sqrt{n}}{\sqrt{n+1}}\right)$ centered at $\bv_n$ is inscribed in $\sigma_n$ and tangent to $\partial\sigma_n$ at each $S_{n+1}$--translate of $\sigma_{n-1}$.  In particular, this sphere intersects every side of $\sigma_n$.

To establish the claim it remains to show for each $\bv\in\sigma_n$ that there is some side of $\sigma_n$ that is at least as far from $\bv$ as from $\bv_n$.  To this point we note that if $\bv = (v_1,\hdots,v_{n+1})\in\sigma_n-\{\be_{n+1}\}$ then the closest point of $\sigma_{n-1}$ to $\bv$ is $\bx = \pi(\bv)/\|\pi(\bv)\|$, where $\pi(\bv) = (v_1,\hdots,v_n)$.  This follows from the Cauchy-Schwarz inequality as above.  We compute that $\pi(\bv)\cdot\pi(\bv) = v_1^{2}+\hdots+v_n^2 = 1 - v_{n+1}^2$, so
\[ d(\bv,\sigma_{n-1}) = d(\bv,\bx) = \cos^{-1}\left(\frac{\bv\cdot\pi(\bv)}{\|\pi(\bv)\|}\right) = \cos^{-1}\sqrt{1-v_{n+1}^2}. \]
(This formula also holds for $\bv = \be_{n+1}$, which has distance $\pi/2 = \cos^{-1}(0)$ from all points of $\sigma_{n-1}$.)  Each other side of $\sigma_n$ is also contained in a coordinate plane; call $\sigma_{n-1}^{(i)}$ the side contained in the coordinate plane perpendicular to $\be_i$ (so $\sigma_{n-1} = \sigma_{n-1}^{(n+1)}$).  For $\bv\in\sigma$ and $1\leq i\leq n+1$, an analogous argument shows that
\[ d(\bv,\sigma_{n-1}^{(i)}) = \cos^{-1}\sqrt{1-v_i^2}. \]
The right side of this equation increases with $v_i$, so for fixed $\bv$ the distance to $\sigma_{n-1}^{(i)}$ is maximized at any $i$ for which $v_i$ is maximal.  But the maximum entry of $\bv$ is at least $1/\sqrt{n+1}$ since $\|\bv\|=1$.\end{proof}

It remains to consider the case when the nearest point of $P$ to $\tilde\alpha$ is not a vertex.  We handle this case by induction, more or less: if the closest point $p$ of $P$ to $\tilde\alpha$ lies in the interior of a face $e$ of codimension $k\leq n$ then we work in the $k$--dimensional geodesic subspace $L$ of $\mathbb{H}^{n+1}$ that contains $p$ and is orthogonal to the $(n+1-k)$--plane containing $e$.  For each side of $P$ that contains $e$, the hyperplane containing it intersects $L$ perpendicularly in a codimension-one geodesic subspace, and the collection of all these subspaces determines a polyhedron in $L$ which contains $P\cap L$ and has a single vertex at $p$.  This polyhedron intersects $\partial L$ in an all-right spherical simplex of dimension $k-1$, which by the claim has in-radius $\cos^{-1}\left(\frac{\sqrt{k-1}}{\sqrt{k}}\right)$.

This quantity is larger than $\cos^{-1}\left(\frac{\sqrt{n}}{\sqrt{n+1}}\right)$, so for $j$ as above it follows that the intersection with $L$ of at least one hyperplane containing a side of $P$ does not intersect $j\cap L$.  Since both $j$ and this hyperplane intersect $L$ orthogonally, it follows that $j$ misses this hyperplane, which hence again separates $\tilde\alpha$ from $P$.\end{proof}

\begin{lemma}  A tubular neighborhood in $\mathbb{H}^{n+1}$ of radius $R$ around a geodesic segment of length $\ell$ has volume $\mathrm{Vol}(\mathbf{B}^n)\sinh^n(R)\ell$, where $\mathrm{Vol}(\mathbf{B}^n)$ is the Euclidean volume of the unit ball in $\mathbb{R}^n$.\end{lemma}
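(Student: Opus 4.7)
The plan is to compute the volume directly using Fermi coordinates adapted to the geodesic segment $\gamma$. After parametrizing $\gamma$ by arc length $t\in[0,\ell]$, I would introduce coordinates $(t,r,\omega)$ in which $r\in[0,R]$ is the distance from a point to $\gamma$ and $\omega\in S^{n-1}$ records the unit direction in which the minimizing geodesic to that point departs $\gamma(t)$ inside the totally geodesic hyperplane $H_t\perp\gamma$ at $\gamma(t)$. Because there are no focal points in negative curvature, the normal exponential map of $\gamma$ is a diffeomorphism onto the tubular neighborhood, so these coordinates parametrize it bijectively (and the locus where $r=0$ is a measure-zero coordinate singularity along $\gamma$, which may be ignored for the volume computation).

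Next I would verify that in these coordinates the hyperbolic metric pulls back to
$$ ds^2 \;=\; \cosh^2(r)\,dt^2 \,+\, dr^2 \,+\, \sinh^2(r)\,d\Omega^2, $$
where $d\Omega^2$ is the round metric on $S^{n-1}$. The middle two terms arise because each slice $H_t$ inherits a totally geodesic copy of $\mathbb{H}^n$ written in polar coordinates centered at $\gamma(t)$. The factor $\cosh^2(r)$ in the $dt$-term records the rate at which the equidistant ``tube'' at radius $r$ is stretched along the axis, and can be checked quickly by placing $\gamma$ on the vertical geodesic above $\mathbf{0}$ in the upper half-space model. The associated volume form is therefore $\cosh(r)\sinh^{n-1}(r)\,dt\,dr\,d\Omega$.

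Finally, applying Fubini, the tubular neighborhood has volume
$$ \int_0^\ell\!\int_0^R\!\int_{S^{n-1}} \cosh(r)\sinh^{n-1}(r)\,d\Omega\,dr\,dt \;=\; \ell\cdot\omega_{n-1}\cdot\int_0^R\cosh(r)\sinh^{n-1}(r)\,dr, $$
where $\omega_{n-1}$ is the surface area of the unit $(n-1)$-sphere. The substitution $u=\sinh(r)$ evaluates the remaining integral as $\sinh^n(R)/n$, and the standard identity $\omega_{n-1}/n = \mathrm{Vol}(\mathbf{B}^n)$ yields the claimed formula. There is no genuine obstacle here: the only step requiring any care is identifying the Fermi-coordinate metric, and even that is standard; the remainder is a routine one-variable calculation.
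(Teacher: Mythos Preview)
Your proof is correct. The paper itself does not give a proof but simply cites Lemmas~3.2 and~4.1 of \cite{Patel} for the cases $n=2,3$ and refers to the preprint \cite{Patel_pre} for the general case; your Fermi-coordinate computation is the standard self-contained argument and almost certainly coincides with what is in those references.
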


\begin{proof}  This is a straightforward generalization of Lemmas 3.2 and 4.1 of \cite{Patel}.  Details are worked out in the preprint version \cite{Patel_pre} of \cite{Patel}, see Lemma 6.2 there.\end{proof}

\noindent We now prove Theorem \ref{red sauce}, where for the reader's convenience we recall our standing assumptions in the statement.

\begin{thm}\label{red sauce}\RedSauceThm\end{thm}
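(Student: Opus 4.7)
The plan is to extend the strategy of \cite[Thm 3.3, Thm 4.3]{Patel} to the non-compact polyhedron setting, using the embedded horoballs $\calb$ together with Corollary \ref{embedded ball} to control the interaction between the axis of $f_*(\alpha)$ and the cusps of $P$. I would first lift to the universal cover, letting $\tilde\alpha \subset \mathbb{H}^{n+1}$ be the geodesic axis of $f_*(\alpha)$ (viewing $\pi_1 M$ as a subgroup of $\Gamma_P$ stabilizing a totally geodesic $\mathbb{H}^m$), and pick a segment $\tilde\alpha_1 \subset \tilde\alpha$ of length $2\ell(\alpha)$. This doubling of the fundamental segment for $\alpha$ is what later forces $\alpha$ to fail to stabilize the convexification, and it is the source of the factor $2$ in the final bound. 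Let $\calc$ be the $\Gamma_P$-convexification of $\tilde\alpha_1$, so $\calc = \gamma_1.P \cup \cdots \cup \gamma_N.P$ is a minimal convex union of $\Gamma_P$-translates of $P$; by Lemma \ref{closest point}, each $\gamma_i.P$ meets $\mathcal{N}_R(\tilde\alpha_1)$ with $R = \ln(\sqrt{n+1}+\sqrt{n})$.

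The core geometric step is bounding $N$ using the ``enlarged core'' $\tilde P_0 := \mathcal{N}_{R+h_{\max}}(P_0) \cap P$, where $P_0 := \overline{P - \bigcup\{B \in \calb\}}$. I would show that for each $i$ there exists $x_i \in \gamma_i.\tilde P_0 \cap \mathcal{N}_R(\tilde\alpha_1)$. Given any $x \in \gamma_i.P \cap \mathcal{N}_R(\tilde\alpha_1)$: if $x \in \gamma_i.P_0$ we are done, so assume instead that $x$ lies in a horoball region of $\gamma_i.P$, which by Lemma \ref{stabilizer} must be $\gamma_i.P \cap \gamma'.B$ for some horoball $\gamma'.B$ centered at an ideal vertex of $\gamma_i.P$. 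Choose $y \in \tilde\alpha_1$ with $d(x,y) \leq R$. If $y \notin \gamma'.B$, then by convexity of horoballs the geodesic from $x$ to $y$ crosses $\partial \gamma'.B$, so $d(x, \partial \gamma'.B) \leq R$. If $y \in \gamma'.B$, then $y \in \mathbb{H}^m \cap \gamma'.B$, and Corollary \ref{embedded ball} gives $d(y, \partial \gamma'.B) \leq h_{\max}$, whence $d(x, \partial \gamma'.B) \leq R + h_{\max}$. In either case crossing $\partial \gamma'.B$ moves into $\gamma_i.P_0$ (using convexity of $P$), so $d(x, \gamma_i.P_0) \leq R + h_{\max}$ and hence $x \in \gamma_i.\tilde P_0$. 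Since $\tilde P_0$ has diameter $d_{R+h_{\max}}$, each $\gamma_i.\tilde P_0$ is contained in $\mathcal{N}_{R+d_{R+h_{\max}}}(\tilde\alpha_1)$. The interiors of the $\gamma_i.P$ are disjoint (as $P$ is a fundamental domain for $\Gamma_P$), and $\tilde P_0 \subset P$, so the $\gamma_i.\tilde P_0$ also have disjoint interiors. Using the tubular neighborhood volume formula just established, this yields
$$N \cdot V_{R+h_{\max}} \;\leq\; \mathrm{Vol}\bigl(\mathcal{N}_{R+d_{R+h_{\max}}}(\tilde\alpha_1)\bigr) \;=\; v_n(1)\,\sinh^n\bigl(R + d_{R+h_{\max}}\bigr)\cdot 2\ell(\alpha),$$
which matches the bound in the statement.

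Finally, I would apply the Poincar\'e polyhedron theorem to $\calc$, which is a finite-volume convex polyhedron whose dihedral angles remain right (gluing right-angled polyhedra across shared sides preserves right angles at every remaining codimension-two face). This gives a subgroup $H < \Gamma_P$ generated by reflections in the external sides of $\calc$, with $\calc$ as a fundamental domain and $[\Gamma_P : H] = N$. Because $\tilde\alpha_1$ has length $2\ell(\alpha)$ and lies in $\calc$, the shifted segment $\alpha.\tilde\alpha_1$ extends past $\tilde\alpha_1$ by a full period, so $\alpha.\calc \neq \calc$ and hence $\alpha \notin H$. Setting $H' := H \cap \pi_1 M$ produces the desired subgroup: $\alpha \notin H'$, and $[\pi_1 M : H'] \leq [\Gamma_P : H] = N$.

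The main obstacle I expect is the enlarged-core estimate of the second paragraph, where the case analysis via Corollary \ref{embedded ball} is the essential new ingredient that allows Patel's compact-polyhedron technique to extend across cusps. A secondary but nontrivial point is verifying that the Poincar\'e polyhedron theorem applies cleanly to the finite-volume non-compact convex polyhedron $\calc$; finiteness of $N$, needed to even run this step, is itself only guaranteed once the volume comparison in the previous paragraph is in place, so these two steps must be carried out in the correct order.
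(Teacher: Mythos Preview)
Your proposal is correct and follows essentially the same approach as the paper. The paper's proof is terse, establishing only the key new geometric claim---that the nearest point of each $\gamma.P$ in the convexification to $\tilde\alpha$ lies in $\gamma.\mathcal{N}_{R+h_{\max}}$, via the same case split on whether the nearby point of $\tilde\alpha$ lies inside or outside the horoball and the same appeal to Corollary \ref{embedded ball}---and then defers the volume comparison and the Poincar\'e polyhedron step to \cite[Thm 3.3]{Patel}; you have simply spelled those deferred steps out.
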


\begin{proof}  With hypotheses of Theorem \ref{red sauce}, let $\tilde{\alpha}\subset \mathbb{H}^m$ be the geodesic axis of $\alpha$, where $\mathbb{H}^m$ is the totally geodesic subspace of $\mathbb{H}^{n+1}$ stabilized by $\pi_1 M$.  We claim that every polyhedron $\gamma.P$ in the convexification $\calc$ of $\tilde{\alpha}$ has its closest point to $\tilde{\alpha}$ in $\gamma.\mathcal{N}_{R+h_{\max}}$, where $\mathcal{N}_{R+h_{\max}}$ is the $(R+h_{\max})$--neighborhood of $\overline{P-\bigcup\{B\in\calb\}}$.

To prove the claim, suppose that for some $\gamma\in \Gamma_P$ such that $\gamma.P$ is in the convexification of $\tilde{\alpha}$, that the nearest point $x$ of $\gamma.P$ to $\tilde{\alpha}$ lies in $\gamma.B$, for some $B\in\calb$, at distance greater than $R$ from $\partial(\gamma.B)$.  Then the nearest point $y$ on $\tilde{\alpha}$ to $x$ also lies in $\gamma.B$, by Lemma \ref{closest point}.  By Corollary \ref{embedded ball}, $y$ is no further from $\partial(\gamma.B)$ than $h_{\max}$, so $x$ lies no further than $R+h_{\max}$ from $\partial(\gamma.B)$. 

The claim implies for each translate $\gamma.P$ in $\calc$ that all of $\gamma.\mathcal{N}_{R+h_{\max}}$ is contained in the $(R+d_{R+h_{\max}})$--neighborhood of $\tilde{\alpha}$.  We obtain the bound of the theorem by arguing as in the proof of \cite[Thm 3.3]{Patel}. For the sake of brevity, we will not repeat the entirety of that proof here. The idea of the proof is to consider one lift $\overline \alpha$ of $\alpha$ to $\mathbb{H}^m$, which lies along $\tilde \alpha$ and,  using $\mathcal{C}$, produce a fundamental domain $F$ for the action of the desired subgroup $H'$, with the property that at least one endpoint of $\overline \alpha$ is contained in the interior of $F$. This property ensures that $\alpha \notin H'$. We then use the estimates produced above to bound the number of polyhedra in $F$, which in turn gives a bound on the index of $H'$ in $\pi_1M$.  \end{proof}


\section{Explicit constants}\label{constants}

Recall that the right-angled polyhedron $P_6$ of Theorem \ref{Sec:ArEx:T1} is a union of translates of a simplex $\sigma\subset\mathbb{H}^6$ which is a fundamental domain for the action of $\mathrm{SO}(6,1;\mathbb{Z})$.  The Coxeter diagram of $\sigma$ is reproduced in Figure \ref{coxeter} with vertices numbered (compare \cite[Fig~1]{ALR} and \cite[Fig~7.3.4]{Ratcliffe}).  It has a vertex for each side of $\sigma$, with two vertices connected by a single edge if their corresponding sides intersect with an interior angle of $\pi/3$. The sides corresponding to the two vertices connected by the doubled edge intersect with an interior angle of $\pi/4$.  Two vertices are not joined by an edge if the sides they represent intersect at right angles.

\begin{figure}[H]
\begin{tikzpicture}[scale=0.8]

\fill (0,0) circle [radius=0.1]; \node [above] at (0,0) {$1$};
\fill (1,0) circle [radius=0.1]; \node [above] at (1,0) {$2$};
\fill (2,0) circle [radius=0.1]; \node [above] at (2,0) {$4$};
\fill (3,0) circle [radius=0.1]; \node [above] at (3,0) {$5$};
\fill (4,0) circle [radius=0.1]; \node [above] at (4,0) {$6$};
\fill (5,0) circle [radius=0.1]; \node [above] at (5,0) {$7$};
\fill (2,-1) circle [radius=0.1]; \node [left] at (2,-1) {$3$};
\draw [very thick] (0,0) -- (4,0);
\draw [very thick] (4,0.07) -- (5,0.07);
\draw [very thick] (4,-0.07) -- (5,-0.07);
\draw [very thick] (2,0) -- (2,-1);


\end{tikzpicture}
\caption{The Coxeter diagram of a simplex $\sigma\subset\mathbb{H}^6$.}
\label{coxeter}
\end{figure}
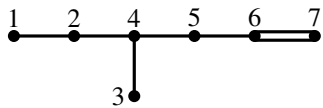

Our first goal here is to understand the geometry of $\sigma$ better.  We will follow the proof of Theorems 7.2.4 and 7.3.1 of \cite{Ratcliffe}, which construct Coxeter simplices, to give an explicit description of $\sigma$ in the hyperboloid model for $\mathbb{H}^6$ (see eg.~\cite[Ch 3]{Ratcliffe} for an introduction to this model).  For each $i$ between $1$ and $7$, let $S_i$ be the side of $\sigma$ corresponding to the vertex labeled $i$.  We will first locate the inward-pointing normal $\bv_i$ to $S_i$ for each such $i$.  Then for each $i$ we will locate the vertex $\bx_i$ of $\sigma$ opposite $S_i$.  (We are following Ratcliffe's notation as closely as possible here; note in particular that $\bv_i$ is \textit{not} a vertex of $\sigma$.)

The Gram matrix $A$ of $\sigma$ can be read off from the Coxeter diagram.  Its $(i,j)$-entry is $-\cos\theta_{ij}$, where $\theta_{ij}$ is the interior angle of $\sigma$ at $S_i\cap S_j$.\[
	A = \begin{pmatrix} 1 & -1/2 & 0 & 0 & 0 & 0 & 0 \\ -1/2 & 1 & 0 & -1/2 & 0 & 0 & 0 \\ 
		0 & 0 & 1 & -1/2 & 0 & 0 & 0 \\ 0 & -1/2 & -1/2 & 1 & -1/2 & 0 & 0 \\ 
		0 & 0 & 0 & -1/2 & 1 & -1/2 & 0 \\ 0 & 0 & 0 & 0 & -1/2 & 1 & -1/\sqrt{2} \\ 
		0 & 0 & 0 & 0 & 0 & -1/\sqrt{2} & 1 \end{pmatrix}. \]
Applying the Gram--Schmidt process to the standard basis of $\mathbb{R}^7$ yields one which is orthonormal with respect to the bilinear form determined by $A$.  A bit more manipulation gives a matrix $C$ with the property that $C^t A C = J$, where $J$ is the diagonal matrix with $(i,i)$-entry equal to $1$ for $i<7$ and $-1$ for $i=7$. \[
	C = \begin{pmatrix} 1 & \frac{-1}{2} & 0 & 0 & 0 & 0 & 0 \\
		0 & \frac{\sqrt{3}}{2} & 0 & \frac{-1}{\sqrt{3}} & 0 & 0 & 0 \\
		0 & 0 & 1 & \frac{-1}{2} & 0 & 0 & 0 \\ 0 & 0 & 0 & \frac{1}{2}\sqrt{\frac{5}{3}} & - \sqrt{\frac{3}{5}} \\
		0 & 0 & 0 & 0 & \sqrt{\frac{2}{5}} & \frac{-1}{2}\sqrt{\frac{5}{2}} & 0 \\
		0 & 0 & 0 & 0 & 0 & \frac{1}{2}\sqrt{\frac{3}{2}} & \frac{-2}{\sqrt{3}} \\
		0 & 0 & 0 & 0 & 0 & 0 & \frac{1}{\sqrt{3}} \end{pmatrix}. \]
(That $C^tAC=J$ can easily be checked with a computer algebra system.)  As in the proof of \cite[Thm 7.2.4]{Ratcliffe}, for each $i$ between $1$ and $7$ the $i$th column $\bv_i$ of $C$ is the inward-pointing normal to the face $S_i$ of $\sigma$, which is itself the intersection with $\mathbb{H}^6$ of the image of the non-negative orthant $\{(x_1,\hdots,x_7)\,|\, x_i\geq 0\}$ under the inverse of the linear transformation determined by $C^tJ$.

For each $i$, the vertex $\bx_i$ of $\sigma$ opposite $S_i$ is the intersection of the faces $S_j$ for $j\neq i$.  It is therefore characterized by the property that $\bx_i\circ \bv_j = 0$, $j\neq i$, where ``$\circ$'' refers to the Lorentzian inner product on $\mathbb{R}^7$.  A little linear algebra therefore yields the following descriptions for the $\bx_i$:\[\begin{array}{rl}
	\bx_7 & = (0,0,0,0,0,0,1), \\
	\bx_6 & = \left(0,0,0,0,0,\frac{-1}{\sqrt{3}},\frac{2}{\sqrt{3}}\right), \\
	\bx_5 & = \left(0,0,0,0,\frac{-1}{2},\frac{-1}{2}\sqrt{\frac{5}{3}},\sqrt{\frac{5}{3}}\right), \\
	\bx_4 & = \left(0,0,0,\frac{-1}{\sqrt{5}},-\sqrt{\frac{3}{10}},\frac{-1}{\sqrt{2}},\sqrt{2}\right), \\
	\bx_3 & = \left(0,0,\frac{-1}{\sqrt{2}},-\sqrt{\frac{3}{10}},\frac{-3}{2\sqrt{5}},\frac{-\sqrt{3}}{2},\sqrt{3}\right), \\
	\bx_2 & = \left(0,\frac{-1}{\sqrt{3}},0,\frac{-2}{\sqrt{15}},-\sqrt{\frac{2}{5}},-\sqrt{\frac{2}{3}},2\sqrt{\frac{2}{3}}\right), \\
	\bx_1 & = \left(-1,\frac{-1}{\sqrt{3}},0,\frac{-2}{\sqrt{15}},-\sqrt{\frac{2}{5}},-\sqrt{\frac{2}{3}},2\sqrt{\frac{2}{3}}\right)\cdot t.
\end{array} \]
Note that $\bx_1$ depends on a real parameter $t$: this is because it does not lie in $\mathbb{H}^6$ but is a line in the light cone representing the sole ideal vertex of $\sigma$.  

\begin{remark} As a check for the computation to this point, we compare with Everitt--Ratcliffe--Tschantz \cite{EvRaTsch}, which also identifies the vertices of a simplex isometric to $\sigma$. (It is called $\Delta^6$ there.) The matrix
\[ \begin{pmatrix} -\frac{1}{\sqrt{2}} & \frac{1}{\sqrt{2}} & 0 & 0 & 0 & 0 & 0 \\ -\frac{1}{\sqrt{6}} & -\frac{1}{\sqrt{6}} & \sqrt{\frac{2}{3}} & 0 & 0 & 0 & 0 \\ \frac{1}{\sqrt{2}} & \frac{1}{\sqrt{2}} & \frac{1}{\sqrt{2}} & 0 & 0 & 0 &-\frac{1}{\sqrt{2}} \\ \frac{1}{\sqrt{30}} & \frac{1}{\sqrt{30}} & \frac{1}{\sqrt{30}} & \sqrt{\frac{6}{5}} & 0 & 0 & -\sqrt{\frac{3}{10}} \\ \frac{1}{2\sqrt{5}} & \frac{1}{2\sqrt{5}} & \frac{1}{2\sqrt{5}} & \frac{1}{2\sqrt{5}} & \frac{\sqrt{5}}{2} & 0 & \frac{-3}{2\sqrt{5}} \\ \frac{1}{2\sqrt{3}} & \frac{1}{2\sqrt{3}} & \frac{1}{2\sqrt{3}} & \frac{1}{2\sqrt{3}} & \frac{1}{2\sqrt{3}} & \frac{2}{\sqrt{3}} & -\frac{\sqrt{3}}{2} \\ -\frac{1}{\sqrt{3}} & -\frac{1}{\sqrt{3}} & -\frac{1}{\sqrt{3}} & -\frac{1}{\sqrt{3}} & -\frac{1}{\sqrt{3}} & -\frac{1}{\sqrt{3}} & \sqrt{3} \end{pmatrix} \in \mathrm{SO}^+(6,1), \]
takes each vertex of $\Delta^6$ listed in Table 1 of \cite{EvRaTsch} to one of the $\bx_i$ described above.  In particular, its product with  $(1,0,0,0,0,0,1)$ is $\bx_1$ (with $t=1/\sqrt{2}$).\end{remark}

Each fixed $t>0$ determines a horoball of $\mathbb{H}^6$ centered at $\bx_1$: the set of points $\by\in\mathbb{H}^6$ satisfying $\by\circ\bx_1 \geq -1$.  (This perspective was exploited by eg.~Epstein--Penner \cite{EpstePen}.)  Direct computation shows that $\bx_2\circ\bx_1 = -t$ is the largest value among the $\bx_j\circ\bx_1$, $2\leq j\leq 7$, for any fixed $t>0$.  Therefore fixing $t=1$ and calling the corresponding horoball $B$, we have that $\bx_2$ lies on the boundary of $B$, with $\bx_j$ outside $B$ for all $j>2$.

Below we summarize the development above, and some additional observations.

\begin{lemma}\label{mabel}  Let $\sigma\subset\mathbb{H}^6$ be the (generalized) hyperbolic simplex with Coxeter diagram given in Figure \ref{coxeter}.  For $1\leq i\leq 7$, let $S_i$ be the side of $\sigma$ corresponding to the vertex labeled $i$ in the figure, and let $\bx_i$ be the vertex of $\sigma$ opposite $S_i$. 

Among the $\bx_i$, $1\leq i\leq 7$, only $\bx_1$ is ideal.  Let $B$ be the horoball of $\mathbb{H}^6$ centered at $\bx_1$  which has $\bx_2$ in its boundary. The totally geodesic hyperplane of $\mathbb{H}^6$ containing $S_1$ intersects $B$ only at $\bx_2$.  In the Euclidean metric that $\partial B$ inherits from $\mathbb{H}^6$, $\sigma\cap\partial B$ is a simplex with volume $1/(2^{9.5}\cdot5\cdot3)$.  Finally, for $d_{\max} = \cosh^{-1}(\sqrt{3})$, the closed $d_{\max}$--neighborhood of $\bx_7$ contains all of $\overline{\sigma-(\sigma\cap B)}$.\end{lemma}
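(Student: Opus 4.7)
The proof splits into three parts corresponding to the three assertions, and throughout the plan is to work directly with the explicit Lorentzian coordinates for the $\bx_i$ established just before the lemma. For the tangency claim, I would first observe by direct coordinate comparison that $\bx_1 = \bx_2 - \be_1$ and that $\bx_1\circ\bx_2 = -1$. The second identity puts $\bx_2 \in \partial B$, while the first shows that at $\bx_2$ the gradients of the defining functions $\by\circ\be_1 = 0$ of $\{y_1 = 0\}$ (which contains $S_1$) and $\by\circ\bx_1 = -1$ of $\partial B$ agree modulo the tangent space $\bx_2^\perp$ of $\mathbb{H}^6$ at $\bx_2$. Hence the hyperplane containing $S_1$ is tangent to $\partial B$ at $\bx_2$, and since a horoball is convex, this tangent totally geodesic hyperplane is a supporting hyperplane meeting $\partial B$ only at the tangent point, giving $\{y_1 = 0\} \cap B = \{\bx_2\}$.

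For the volume claim, I would locate the vertices of the Euclidean simplex $\sigma\cap\partial B$ by following each edge $\overline{\bx_1 \bx_i}$ of $\sigma$. Writing a generic point on such an edge as $a\bx_i + b\bx_1$ and imposing $\by\circ\by = -1$ and $\by\circ\bx_1 = -1$ yields the closed-form vertex
\[ y_i = \tfrac{1}{c_i}\bx_i + \tfrac{c_i^2-1}{2c_i^2}\bx_1, \qquad c_i := -\bx_i\circ\bx_1, \]
with $y_2 = \bx_2$ since $c_2 = 1$. The induced flat metric on $\partial B$ satisfies $d_E(p,q)^2 = (p-q)\circ(p-q)$ for $p,q\in\partial B$ (by polarizing the ambient identity $\cosh d_{\mathbb{H}^6}(p,q) = 1 + d_E(p,q)^2/2$), so each squared edge length reduces to
\[ d_E(y_i,y_j)^2 = -\tfrac{2\,\bx_i\circ\bx_j}{c_i c_j} - \tfrac{1}{c_i^2} - \tfrac{1}{c_j^2}, \]
which I would then tabulate from the given coordinates. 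All these squared distances turn out to have the form $k/8$ for $k\in\{1,\ldots,5\}$, and exploiting the $\mathbb{Z}/2$-symmetry exchanging vertices $2$ and $3$ of the Coxeter subdiagram (both attached only to vertex $4$) would produce orthonormal coordinates $\mathbf{f}_1, \ldots, \mathbf{f}_5$ on $\partial B$, centered at the midpoint of $\overline{y_2 y_3}$, in which
\[ y_2, y_3 = \pm\tfrac{1}{\sqrt{8}}\mathbf{f}_5, \qquad y_i = \tfrac{1}{\sqrt{8}}(\mathbf{f}_1+\cdots+\mathbf{f}_{i-3}) \text{ for } i \in \{4,5,6,7\}. \]
The volume then follows from the determinant of the $5\times 5$ matrix of edge vectors based at $y_2$: factoring out the $1/\sqrt{8}$'s leaves an upper-triangular integer matrix, and dividing by $5!$ yields $1/(2^{9.5}\cdot 5\cdot 3)$.

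For the distance bound, the function $f(\by) := -\by\circ\bx_7 = \cosh d(\by, \bx_7)$ is strictly convex on $\mathbb{H}^6$ (along a geodesic $\gamma$ one computes $(f\circ\gamma)'' = f\circ\gamma > 0$), and a similar direct calculation in the paraboloid parametrization $\by(\bw) = \bx_2 + \bw + \tfrac{1}{2}(\bw\circ\bw)\bx_1$ of $\partial B$ shows that it is strictly convex on the flat horosphere $\partial B$ as well, with Hessian $-\bx_1\circ\bx_7 > 0$. Consequently $f$ attains no interior maxima on $\overline{\sigma-(\sigma\cap B)}$, and an induction on codimension reduces the search to the finite vertex set $\{\bx_i\}_{i=2}^7\cup\{y_i\}_{i=2}^7$. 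A direct evaluation then identifies the maximum as $-\bx_3\circ\bx_7 = \sqrt{3}$, so $d_{\max} = \cosh^{-1}(\sqrt{3})$. The main obstacle of the proof is the volume computation, whose clean form $1/(2^{9.5}\cdot 5\cdot 3)$ depends on recognizing the pattern among the edge lengths that yields the orthonormal embedding above.
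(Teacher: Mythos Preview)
Your argument is correct, and for the second and third assertions it follows a genuinely different route from the paper.

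For the tangency claim, your gradient/supporting-hyperplane argument and the paper's argument are essentially the same computation in different clothing: both hinge on the coordinate identity $\bx_1=\bx_2-\be_1$, which on the hyperplane $\{y_1=0\}$ gives $\by\circ\bx_1=\by\circ\bx_2\le -1$ with equality only at $\bx_2$. (Your phrasing should invoke \emph{strict} convexity of the horoball, since mere convexity allows a supporting hyperplane to meet $\partial B$ in more than a point.)

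For the volume, the paper does not compute vertices or edge lengths directly. Instead it reads off from the Coxeter diagram of $\sigma'=\sigma\cap\partial B$ (the diagram of Figure~\ref{coxeter} with vertex~$1$ deleted) that $\sigma'$ is the double, across one face, of a fundamental domain $\sigma_0$ for the symmetry group of a Euclidean $5$--cube. A single edge-length computation (your $d_E(y_2,y_3)=1/\sqrt{2}$) then pins down the cube's sidelength, and the volume follows from the flag count $2^5\cdot 5!$. Your approach---computing all $y_i$, tabulating the squared edge lengths $k/8$, and recognizing the resulting staircase embedding in orthonormal coordinates---is more elementary but considerably more computational; what it buys is that one never has to identify the Coxeter diagram or know the structure of the cube's symmetry group.

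For the distance bound, the paper writes a general $\bp\in\overline{\sigma-(\sigma\cap B)}$ as a nonnegative combination $\sum t_i\bx_i$, uses the horoball constraint to bound $t_1$ in terms of $\bp_0=\sum_{i\ge 2}t_i\bx_i$, and then manipulates Lorentzian inequalities directly to obtain $-\bp\circ\bx_7\le\sqrt{3}$. Your convexity reduction to the finite vertex set $\{\bx_i\}_{i\ge 2}\cup\{y_i\}_{i\ge 2}$ is cleaner and more transparent; the paper's approach avoids having the $y_i$ in hand but at the cost of an ad~hoc chain of estimates.
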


\begin{proof}  The subspace $V_1 = \{0\}\times\mathbb{R}^6$ of $\mathbb{R}^7$ intersects $\mathbb{H}^6$ in the totally geodesic hyperplane $H_1$ containing the face $S_1$ (and hence also $\bx_2,\hdots,\bx_7$ in particular): note that $V_1$ is clearly Lorentz-orthogonal to the first column $\bv_1$ of the matrix $C$.  For any $\by\in H_1$ we have 
\[ \by\circ\bx_1 = \by\circ\bx_2 \leq -1, \]
with equality if and only if $\by=\bx_2$.  Here the equality follows from the explicit descriptions of $\bx_1$ and $\bx_2$ and the fact that $\by$ has first entry equal to zero.  The inequality above follows from a consequence of the Cauchy--Schwarz inequality: if $\bx\circ\bx = a\leq 0$ and $\by\circ\by=b\leq 0$, and $\bx$ and $\by$ have positive $n$th entries, then $\bx\circ\by\leq -\sqrt{ab}$, with equality if and only if $\by$ is a scalar multiple of $\bx$.  We thus find that $H_1\cap B = \bx_2$.

Let $\sigma' = \sigma\cap\partial B$, and for each $i>1$ let $S_i' = S_i\cap\partial B$.  Each such $S_i'$ is a Euclidean hyperplane in the metric that $\partial B$ inherits from $\mathbb{H}^6$, and the angle of intersection between $S_i'$ and $S_j'$ matches that of $S_i$ and $S_j$.  It follows that the Coxeter diagram of $\sigma'$ is obtained from the one in Figure \ref{coxeter} by removing the vertex labeled $1$ and the interior of the edge attached to it.

We now briefly recap the standard fact that $\sigma'$ is the double of a simplex $\sigma_0$ which is a fundamental domain for the symmetries of a five-dimensional Euclidean cube.  The cube is regular; that is, its symmetry group acts transitively on \textit{flags}, tuples of the form $(F_0,F_1,F_2,F_3,F_4,F_5)$ where $F_5$ is the cube and $F_i$ is a codimension-one face of $F_{i+1}$ for each $i<5$.  For instance, taking $F_i = [0,1]^i\times\{\mathbf{0}_{5-i}\}$ for $0\leq i\leq 5$ yields a flag of the cube $[0,1]^5$.  (Here for any $j>0$ and $r\in\mathbb{R}$, ``$\mathbf{r}_j$'' means the vector in $\mathbb{R}^j$ with all entries $r$.)  

We associate a simplex to such a flag by placing a vertex at the \textit{barycenter} of each $F_i$, the point fixed by all symmetries preserving $F_i$.  Vertices associated to the sample flag above are of the form $\by_i = \mathbf{\frac{1}{2}}_i\times\mathbf{0}_{5-i}$ for $0\leq i\leq 5$.  The cube is thus tiled by these simplices, which all have a vertex at its barycenter.  The cube's symmetry group acts transitively on the simplices, so each is a copy of $\sigma_0$.  

Below is the Coxeter diagram of the reflection group in the sides of $\sigma_0$:

\begin{figure}[ht]
\begin{tikzpicture}[scale=0.8]

\fill (0,0) circle [radius=0.1];
\fill (1,0) circle [radius=0.1];
\fill (2,0) circle [radius=0.1];
\fill (3,0) circle [radius=0.1];
\fill (4,0) circle [radius=0.1];
\fill (5,0) circle [radius=0.1];
\draw [very thick] (1,0) -- (4,0);
\draw [very thick] (4,0.07) -- (5,0.07);
\draw [very thick] (4,-0.07) -- (5,-0.07);
\draw [very thick] (1,0.07) -- (0,0.07);
\draw [very thick] (1,-0.07) -- (0,-0.07);

\end{tikzpicture}
\end{figure}

This can be easily checked by an explicit calculation using the sample copy of $\sigma_0$ described above.  From such a calculation one finds that the face $T$ opposite the vertex $F_0$ corresponds to one of the endpoints of the diagram.  That is, $T$ is perpendicular to all other faces save one, which it intersects at an angle of $\pi/4$.  Doubling $\sigma_0$ across $T$ thus yields another simplex $\sigma'$ which has four faces that are doubles of certain faces of $\sigma_0$ --- those perpendicular to $T$.  These faces have the same angles of intersection in $\sigma'$ as in $\sigma_0$.

The remaining two faces of $\sigma'$ are the face $S_2'$ of $\sigma_0$ that meets $T$ at an angle of $\pi/4$ and its image $S_3'$ under reflection across $T$.  These faces are thus perpendicular, and $S_3'$ meets every other face at the same angle as $S_2'$.  In particular they meet a common face $S_4'$ at an angle of $\pi/3$ and all others at right angles.  It follows that the Coxeter diagram of $\sigma'$ is obtained from that of Figure \ref{coxeter} by removing the vertex labeled $1$ and the interior of the edge attached to it, as claimed above.  Moreover, the faces labeled $S_2'$, $S_3'$ and $S_4'$ here play the same roles as the $S_i' = S_i\cap\partial B$ above.

This last observation can be combined with information about the vertices of our particular embedding of $\sigma'$ to discern the edge lengths of the ambient cube.  Note that the vertex of $\sigma_0$ opposite $T$ is also the vertex of $\sigma'$ opposite $S_3'$, since $T$ separates them.  Similarly, the reflection of this vertex across $T$ is opposite $S_2'$ in $\sigma'$.  And the vertex of $\sigma_0$ opposite $T$ is $F_0$, a vertex of the ambient cube, whence also its reflected image is a vertex of the cube, and the two vertices share an edge. On the other hand, in our embedding of $\sigma'$, its vertices opposite $S_2'$ and $S_3'$ are the orthogonal projections $\bx_2'$ and $\bx_3'$ of $\bx_2$ and $\bx_3$, respectively, to $\partial B$.  Since $\bx_2\in\partial B$ we have $\bx_2' = \bx_2$.  The projection of $\bx_3$ to $\partial B$ is along the geodesic ray
\[ \gamma(t) = e^{-t}\bx_3 - \left(\frac{\sinh t}{\bx_3\circ\bx_1}\right)\bx_1,\quad t\geq 0. \]
(One can verify directly that this is a geodesic ray in $\mathbb{H}^6$, parametrized by arclength, that starts at $\bx_3$ and projectively approaches the class of $\bx_1$ as $t\to\infty$.)  Its intersection with $\partial B$ occurs at $t = \ln(-\bx_3\circ\bx_1) = \ln\sqrt{2}$, so $\bx_3' = \gamma(\ln\sqrt{2}) = \frac{1}{\sqrt{2}}\bx_3+\frac{1}{4}\bx_1$.  The hyperbolic distance $d$ from $\bx_2'$ to $\bx_3'$ satisfies 
\[ \cosh d = -\bx_2'\circ\bx_3' = -\frac{1}{\sqrt{2}}\bx_2\circ\bx_3 - \frac{1}{4}\bx_2\circ\bx_1 = \frac{5}{4}, \]
since $\bx_2\circ\bx_3 = -\sqrt{2}$ and $\bx_2\circ\bx_1 = 1$.  Using the fact that the Euclidean distance $\ell$ from $\bx_2'$ to $\bx_3'$ in $\partial B$ satisfies $\ell/2 = \sinh (d/2)$ we obtain $\ell = 1/\sqrt{2}$.  This is thus the sidelength of the ambient Euclidean cube.

Because an $n$--dimensional cube has $2n$ faces, the $5$-cube has $2^5\cdot 5! = 2^8\cdot5\cdot 3$ flags, so it is tiled by this is the number of copies of $\sigma_0$.  Since $\sigma'$ is the double of $\sigma_0$, the ratio of its volume to that of the ambient cube is $1$ to $2^7\cdot 5\cdot 3$.  And since the cube itself has edgelength $1/\sqrt{2}$ and therefore Euclidean volume $1/2^{2.5}$ we obtain the claimed volume for $\sigma'$.

We finally address the claim regarding $d_{\max} = \cosh^{-1}(\sqrt{3})$.  Suppose $\bp = \sum_{i=1}^7 t_i\bx_i$ is an element of $\sigma$ outside the interior of $B$, so
\[ \bp\circ\bx_1 = \sum_{i=2}^7 t_i\bx_i\circ\bx_1 \leq -1\quad\Rightarrow\quad \sum_{i=2}^7 t_i(-\bx_i\circ\bx_1) \geq 1. \]
(Recall $\bx_i\circ\bx_1 < 0$ for each $i$, since $\bx_i\circ\bx_i=-1$ and $\bx_1\circ\bx_1 = 0$.)  That $\bp$ lies in $\sigma$ means $t_i\geq 0$ for all $i$ and
\[ \bp\circ\bp = \bp_0\circ\bp_0 + 2t_1\sum_{i=2}^7 t_i\bx_i\circ\bx_1 = -1. \]
Here $\bp_0 = \sum_{i=2}^7 t_i\bx_i$.  Solving for $t_1$ yields
\[ t_1 = \frac{-1-\bp_0\circ\bp_0}{2\sum_{i=2}^7 t_i\bx_i\circ\bx_1} \leq \frac{1}{2}(1+\bp_0\circ\bp_0). \]
Note that since $t_1$ is non-negative we must have $\bp_0\circ\bp_0\geq-1$.  We now observe that for each $i$, the inner product $\bx_7\circ\bx_i$ is the opposite of the final entry of $\bx_i$.  The least of these quantities is $\bx_7\circ\bx_3 = -\sqrt{3}$.  So we immediately obtain the inequality $\bp\circ\bx_7 \geq -\sqrt{3}\sum_{i=1}^7 t_i$.  Since $\bx_i\circ\bx_i=-1$ for each $i>1$, we have
\[ \bp_0\circ\bp_0 = -\sum_{i=2}^7 t_i^2 + 2\sum_{i\neq j} t_it_j \bx_i\circ\bx_j \leq -\left(\sum_{i=2}^7 t_i\right)^2. \]
Therefore $\bp\circ\bx_7 \geq -\sqrt{3}(t_1 + \sqrt{-\bp_0\circ\bp_0})\geq -\frac{\sqrt{3}}{2}(1+2\sqrt{-\bp_0\circ\bp_0}+\bp_0\circ\bp_0)$.  A calculus argument shows that this is at least $\sqrt{-3}$ regardless of the value of $\bp_0\circ\bp_0$ in $[-1,0]$.  This proves that $d(\bp,\bx_7)\leq d_{\max}$, since their distance is defined as the inverse hyperbolic cosine of $-\bp\circ\bx_7$.\end{proof}

\begin{cor}\label{able} Let $\sigma\subset \mathbb{H}^6$ be the generalized hyperbolic simplex with Coxeter diagram given in Figure \ref{coxeter}, and let $G$ be the group generated by reflections in the sides of $\sigma$ corresponding to vertices $1$ through $6$.  Then $P = \bigcup \{g(\sigma)\,|\,g\in G\}$ is a right-angled polyhedron of finite volume, and for $B$ as in Lemma \ref{mabel}, $\mathcal{B} = \{g(B)\,|\,g\in G\}$ is a collection of horoballs that are embedded in the sense of Definition \ref{embedded} and pairwise non-overlapping, with one for each ideal vertex of $P$.  For $d_{\max}$ as in Lemma \ref{mabel}, $\overline{P-\bigcup\{B\in\calb\}}$ is contained in the closed ball of radius $d_{\max}$ about $\bx_7$, and it has volume 
\[ 2^7\cdot 3^4\cdot 5 \left(\frac{\pi^3}{2^7\cdot5^2\cdot3^5} - \frac{1}{2^{9.5}\cdot5^2\cdot3}\right) = \frac{2^{2.5}\pi^3-3^4}{2^{2.5}\cdot 5\cdot 3} \approx 1.112. \]
\end{cor}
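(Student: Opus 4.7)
The plan is to combine Lemma \ref{mabel} with the structural description of $P_6$ from \cite[Lemma 3.4]{ALR}. First I will identify $G$: the Coxeter subdiagram of Figure \ref{coxeter} on vertices $\{1,\ldots,6\}$ is of type $E_6$, so $G$ is the $E_6$ Weyl group, with order $|G|=2^7\cdot 3^4\cdot 5=51840$. Since $\bx_7\in S_1\cap\cdots\cap S_6$, the group $G$ fixes $\bx_7$, so $P$ is a finite-sided neighborhood of $\bx_7$ tiled by $|G|$ copies of $\sigma$, giving finite volume. To see $P$ is right-angled, I will read off from Figure \ref{coxeter} that $S_7$ is perpendicular to each of $S_1,\ldots,S_5$ and meets $S_6$ at dihedral angle $\pi/4$. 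Reflecting $\sigma$ across $S_i$ for $i\in\{1,\ldots,5\}$ therefore merges $S_7$ with its image into a single geodesic hyperplane, while reflecting across $S_6$ produces two $S_7$--images meeting at $2\cdot\pi/4=\pi/2$. The sides of $P$ are exactly the $G$--translates of $S_7$, so all dihedral angles of $P$ are right.

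For the horoball claims, I note that $\bx_1\in S_i$ for each $i\in\{2,\ldots,7\}$, so the $G$--stabilizer $G_{\bx_1}$ is generated by the reflections in $S_2,\ldots,S_6$; these form a type--$D_5$ subdiagram, whence $|G_{\bx_1}|=2^4\cdot 5!=1920$ and $|G/G_{\bx_1}|=27$, matching the number of ideal vertices of $P$. Each element of $G_{\bx_1}$ fixes $\bx_1$ and thus preserves $B$ setwise, so $\calb$ has exactly 27 distinct members, one per ideal vertex. Lemma \ref{mabel} tells us $B$ lies on the $\bx_1$--side of the hyperplane carrying $S_1$, meeting it only at $\bx_2$. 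Translating by $G_{\bx_1}$, which preserves $B$, the horoball $B$ is confined within the ``star'' $\bigcup_{h\in G_{\bx_1}} h(\sigma)$ of $\bx_1$ in $P$, whose boundary in $P$ lies on $\Gamma_P$--translates of the hyperplane carrying $S_1$, none of which are incident on $\bx_1$. This verifies the embeddedness condition of Definition \ref{embedded}; applying the same confinement argument to cosets $g_1G_{\bx_1}\ne g_2G_{\bx_1}$ in $G$ gives pairwise non-overlap of the elements of $\calb$.

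Containment of $\overline{P-\bigcup\{B\in\calb\}}$ in the closed $d_{\max}$--ball about $\bx_7$ is immediate: since $G$ fixes $\bx_7$ and $\overline{\sigma-B}$ lies in the $d_{\max}$--ball about $\bx_7$ by Lemma \ref{mabel}, each translate $\overline{g(\sigma)-g(B)}$ lies in the $d_{\max}$--ball about $g(\bx_7)=\bx_7$. For the volume, the confinement of horoballs to stars of ideal vertices implies that $g(\sigma)\cap h(B)$ has measure zero whenever $h\notin gG_{\bx_1}$, so up to measure zero
\[ \mathrm{vol}\bigl(\overline{P-\textstyle\bigcup\{B\in\calb\}}\bigr)=|G|\cdot\mathrm{vol}(\overline{\sigma-B})=|G|\bigl(\mathrm{vol}(\sigma)-\mathrm{vol}(\sigma\cap B)\bigr). \]
Placing $\bx_1$ at infinity in upper half-space turns $B$ into a slab $\{x_6\ge 1\}$; a direct integration of $dx_1\cdots dx_6/x_6^6$ yields $\mathrm{vol}(\sigma\cap B)=\mathrm{vol}_{\mathrm{Euc}}(\sigma\cap\partial B)/5=1/(2^{9.5}\cdot 5^2\cdot 3)$ from the cross-sectional volume established in Lemma \ref{mabel}. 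The hardest step will be evaluating $\mathrm{vol}(\sigma)$, the covolume of $\mathrm{SO}^+(6,1;\Z)$; rather than redo this computation via Schl\"afli's formula, I will cite the known value $\mathrm{vol}(\sigma)=\pi^3/(2^7\cdot 5^2\cdot 3^5)$ (see e.g.~\cite{Ratcliffe} or \cite{EvRaTsch}). Multiplying the difference by $|G|=2^7\cdot 3^4\cdot 5$ and simplifying algebraically yields the claimed value $\tfrac{2^{2.5}\pi^3-3^4}{2^{2.5}\cdot 5\cdot 3}$.
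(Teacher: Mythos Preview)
Your proof is correct and follows essentially the same route as the paper's: both arguments confine $B$ to the ``star'' $P_0=\bigcup_{h\in G_{\bx_1}}h(\sigma)$ using Lemma \ref{mabel}'s statement that $B$ meets the hyperplane through $S_1$ only at $\bx_2$, deduce embeddedness and pairwise non-overlap from this confinement, use that $G$ fixes $\bx_7$ for the $d_{\max}$ containment, and compute the volume as $|G|\bigl(\mathrm{vol}(\sigma)-\tfrac{1}{5}\mathrm{vol}_{\mathrm{Euc}}(\sigma\cap\partial B)\bigr)$ by citing a known value for $\mathrm{vol}(\sigma)$. Your explicit identification of $G$ and $G_{\bx_1}$ as the $E_6$ and $D_5$ Weyl groups (and the count of $27$ ideal vertices) is a nice addition; the paper instead cites \cite[Lemma 3.4]{ALR} for the right-angled structure and \cite{JKRT} for $\mathrm{vol}(\sigma)=\pi^3/777600$, so you should check that your intended reference actually records this value. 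One small slip: where you write ``$\Gamma_P$--translates of the hyperplane carrying $S_1$'' you mean $G_{\bx_1}$--translates, and strictly speaking Definition \ref{embedded} concerns sides of $P$ (translates of $S_7$) rather than these internal $S_1$--walls---but the confinement $B\cap P\subset P_0$ is exactly what is needed, and the paper is no more explicit on this point than you are.
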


\begin{proof}  That $P$ is a right-angled polyhedron follows from the fact that its face $S_7$ corresponding to vertex $7$ intersects every other face at an angle of $\pi/2$ or $\pi/4$, see \cite[Lem 3.4]{ALR}.  Let $H_0$ be the subgroup of $G$ generated by reflections in the faces $S_2$ through $S_6$ of $\sigma$, and let $P_0 = \bigcup \{h(\sigma)\,|\,h\in H_0\}$.  Then $P$ is a non-overlapping union of translates of $P_0$, one for each (say, left) coset of $H_0$.  By construction, each side of $P_0$ is either a union of $H_0$--translates of $S_1$ or of $S_7$.  The sides of the former kind comprise the frontier of $P_0$ in $P$; those of the latter lie in the frontier of $P$.

We claim that $P\cap B = P_0\cap B$.  By Lemma \ref{mabel}, $B$ is contained in the half-space bounded by the geodesic hyperplane $\calh_1$ containing $S_1$ that also contains $\sigma$. Since each of $S_2$ through $S_6$ contains the ideal vertex $\bx_1$, $H_0$ stabilizes $B$, so each $H_0$--translate of $\calh_1$ bounds a half-space containing both $B$ and the corresponding translate of $\sigma$.  If $\calh_1,\hdots,\calh_n$ is the list of such translates containing a side of $P_0$, then both $B$ and $P_0$ are contained in an intersection of half-spaces bounded by the $\calh_i$.  Therefore since the frontier of $P_0$ in $P$ is a union of $H_0$--translates of $S_1$, each point of $P-P_0$ is separated from $P_0$ by some $\calh_i$.  This proves the claim.

The claim implies that $\mathcal{B} = \{ g(B)\,|\,g\in G\}$ is embedded and pairwise non-overlapping: $\mathcal{B}$ corresponds bijectively to the set of cosets of $H_0$ in $G$, and the intersection of each element with $P$ is contained in a corresponding translate of $P_0$.

The remaining claims follow from the fact that $\overline{P-\bigcup\{B\in\mathcal{B}\}}$ is a union of $G$-translates of $\overline{\sigma-(\sigma\cap B)}$, where $G$ is a group of isometries fixing $\bx_7$.  This and the final claim of Lemma \ref{mabel} immediately imply that $\overline{P-\bigcup\{B\in\mathcal{B}\}}$ is contained in the ball of radius $d_{\max}$ about $\bx_7$.  For the volume, we appeal to \cite{JKRT}, which asserts that $\sigma$ has volume $\pi^3/777600$ (see p.~344 there).  The volume of $\sigma'=\sigma\cap\partial B$ is recorded in Lemma \ref{mabel}, and the volume of $\sigma\cap B$ is one-fifth this quantity.  (This follows from a general fact that can be proven using horoballs centered at infinity in the upper half-space model $\{(x_1,\hdots,x_n)\,|\,x_n>0\}$ for $\mathbb{H}^n$, where the hyperbolic volume form is the Euclidean volume form scaled by $1/x_n^n$.)  Subtracting one from the other, and multiplying the result by the order of $G$, gives the formula claimed.\end{proof}

\begin{cor}\label{effective}\EffectiveCor\end{cor}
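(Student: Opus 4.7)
The proof assembles Theorem \ref{Sec:ArEx:T1}, Theorem \ref{red sauce}, and the explicit geometric data for $P_6$ developed in Lemma \ref{mabel} and Corollary \ref{able}. First, applying Theorem \ref{Sec:ArEx:T1} to $\Gamma = \pi_1 M$ with the given $\epsilon$ produces a subgroup $\Delta \leq \Gamma$ of index at most $2^7 3^4 5 \cdot C_\epsilon \cdot D \cdot \vol(M)^\epsilon$ such that $\tilde M := \mathbb{H}^3/\Delta$ admits a totally geodesic immersion into $\mathbb{H}^6/\Gamma_{P_6}$. The quantitative bounds $C_\epsilon \leq 2^{\epsilon C'_\epsilon + 2} 11^2 d_k^{A_1 \omega(d_k) + 3/2}$ and $D \leq A d^{2.975 \cdot 10^{13}}$ are inherited directly from Corollary \ref{cor:indexbound3} and from Proposition \ref{prop:conjbound} applied to the associated form $q''$ of Equation \eqref{eqn:newform} (with $|\disc(q'')| \leq d^7$).

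Next, fix a nonidentity $\alpha \in \Gamma$. If $\alpha \notin \Delta$, take $H' = \Delta$ and we are done. If $\alpha \in \Delta$, apply Theorem \ref{red sauce} to $\tilde M$ with polyhedron $P = P_6$ and the horoball collection $\calb$ supplied by Corollary \ref{able}. This produces $H' \leq \Delta$ with $\alpha \notin H'$ and
\[ [\Delta : H'] \leq \frac{2 v_5(1)}{V_{R+h_{\max}}} \sinh^5\bigl(R + d_{R+h_{\max}}\bigr) \ell(\alpha), \]
where $v_5(1) = 8\pi^2/15$ and $\ell(\alpha)$ is a conjugacy invariant (so the same whether viewed in $\tilde M$ or $M$). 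Writing $[\Gamma : H'] = [\Gamma : \Delta]\cdot [\Delta : H']$ then gives the overall structure of the claimed bound.

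To make the geometric quantities explicit, I will use Corollary \ref{able}: the set $\overline{P_6 - \bigcup \calb}$ has volume exactly $V_0$ and is contained in the closed ball of radius $d_{\max}$ about $\bx_7$. The first fact yields $V_{R+h_{\max}} \geq V_0$. The second gives the diameter estimate $d_{R+h_{\max}} \leq 2(d_{\max} + R + h_{\max})$ by the triangle inequality, and therefore $R + d_{R+h_{\max}} \leq 3R + 2 d_{\max} + 2 h_{\max} \leq 2(2R + d_{\max} + h_{\max})$, matching the argument of $\sinh^5$ in the corollary's formula. The polynomial $p(x) = \frac{x^5}{5} - \frac{2x^3}{3} + x - \frac{8}{15}$ satisfies $p'(x) = (x^2-1)^2$ and $p(1) = 0$, so under the substitution $x = \cosh t$ one has $p(\cosh r) = \int_0^r \sinh^5(t)\,dt$.

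The remaining ingredient --- and the main technical hurdle --- is the estimate $h_{\max} = \ln(\cosh r_{\max}) \leq \ln p^{-1}(\vol(M))$, where $r_{\max}$ denotes the radius of the largest embedded hyperbolic ball in $\tilde M$. This requires comparing the volume of an embedded ball in $\tilde M$ to $\vol(M)$ in a manner that accounts both for the degree of the cover $\tilde M \to M$ and for the ambient $6$-dimensional geometry, whose volume density $\sinh^5(t)\,dt$ is precisely the integrand whose antiderivative is $p$. Once this inequality is in hand, substituting it together with the geometric estimates of the previous paragraph into $[\Gamma : \Delta]\cdot[\Delta : H']$ yields the stated formula.
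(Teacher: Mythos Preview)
Your overall strategy---apply Theorem \ref{Sec:ArEx:T1} to pass to a finite-index subgroup whose quotient immerses in $\mathbb{H}^6/\Gamma_{P_6}$, then apply Theorem \ref{red sauce}, then specialize the constants via Corollary \ref{able}---is exactly the paper's. Your geometric estimates $V_{R+h_{\max}}\geq V_0$ and $d_{R+h_{\max}}\leq 2(R+d_{\max}+h_{\max})$, and your verification that $p(\cosh r)=\int_0^r\sinh^5 t\,dt$, all match the paper's reasoning.

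The genuine gap is the step you yourself flag as ``the main technical hurdle'': you do not establish $h_{\max}\leq \ln p^{-1}(\vol(M))$, and your description of what would be needed (accounting for the degree of the cover $\tilde M\to M$ and the ambient $6$-dimensional density) is not what the paper does. The paper's argument is much more direct: it takes $R$ to be the radius of the largest ball embedded in the \emph{base} manifold $M$, observes that $h_{\max}\leq\ln\cosh R$ via Corollary \ref{embedded ball}, and then asserts $\cosh R\leq p^{-1}(\vol(M))$, invoking only the identity $V_6(r)=\pi^3 p(\cosh r)$. No cover-degree bookkeeping enters. You are right that Theorem \ref{red sauce} is literally applied to the cover, so a priori $r_{\max}$ refers to $\tilde M$; the paper silently passes to $M$ here, and its justification of $p(\cosh R)\leq\vol(M)$ is terse. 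But in any event you should not leave this step open---it is precisely what turns the $M$-dependent constant of Theorem \ref{red sauce} into the explicit function of $\vol(M)$ that the corollary claims.
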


\begin{proof}  Fix $\alpha\in\Gamma-\{1\}$ and $\epsilon>0$.  By Theorem \ref{Sec:ArEx:T1}, $\Gamma$ has a subgroup $\Delta$ that injects to $\mathrm{SO}(6,1;\mathbb{Z})$, with index at most $C_{\epsilon}D\mathrm{vol}(M)$, for $C_{\epsilon}$ and $D$ as described in Theorem \ref{prop:deltaconstruct} and \ref{prop:conjbound}, respectively.  By the discussion above, $D\leq Ad^{2.975\cdot 10^{13}}$. So $\Delta$ has index at most $C_{\epsilon}Ad^{2.975\cdot 10^{13}}\mathrm{vol}(M)^\epsilon$, and if $\alpha\notin\Delta$ then we are done.  So we now assume that it is.

Since $P$ is the union of $2^73^45$ copies of $\sigma$, the reflection group $\Gamma_P$ in its sides has that index in the reflection group $\mathrm{SO}(6,1;\mathbb{Z})$ in the sides of $\sigma$.  Therefore $\Gamma_P\cap\Delta$ has index at most $ 2^73^45 \cdot C_{\epsilon}Ad^{2.975\cdot 10^{13}}\mathrm{vol}(M)^\epsilon$ in $\Gamma$.  If $\alpha\in\Delta-\{1\}\subset\mathrm{SO}(6,1;\mathbb{Z})$ is not in $\Gamma_P$ then again we are done, so we now suppose that $\alpha\subset\Gamma_P$.  We will finally apply Theorem \ref{red sauce} to obtain the stated bound.

The remaining constants in the Corollary's statement are obtained by specializing those of Theorem \ref{red sauce} to our example.  For instance, the general formula $v_n(1) = \pi^{n/2}/\Gamma(\frac{n}{2}+1)$ takes the value $8\pi^2/15$ when $n+1=6$.  And the volume $V_0$ of $\overline{P-\bigcup\{B\in\mathcal{B}\}}$ is less than $V_{R+h_{\max}}$.

The polynomial $p$ above arises from the computation of the volume $V_6(r)$ of a ball in $\mathbb{H}^6$ of radius $r$:
\[ V_6(r) = \pi^3\int_0^r \sinh^5 t \mathit{dt} = \pi^3p(\cosh r). \]
This is used to bound $h_{\max}$ above in terms of the volume of $M$. Corollary \ref{embedded ball} implies that $h_{\max}$ is at most $\ln\cosh R$, where $R$ is the radius of the largest ball embedded in $M$.  For $p(x)$ as above we have $\cosh R \leq p^{-1}(\mathrm{vol}(M))$, so $h_{\max}\leq \ln p^{-1}(\mathrm{vol}(M))$.  Since a ball about $\bx_7$ of radius $d_{\max}$ contains all of $P - \bigcup \{B\in\calb\}$ by Corollary \ref{able}, we may bound $d_{R+h_{\max}}$ by twice the radius $R+d_{\max}+h_{\max}$ of a ball at $v$.
\end{proof}

By combining Corollary \ref{effective} with the results of Section \ref{arithmetic bounds}, we are finally in position to compute an explicit value for the constant $K$ appearing in the inequality \eqref{haha}, when $M$ satisfies the Corollary's hypotheses.

\begin{example}\label{m306 taketwo} Recall from  Example \ref{ex:m306} that the closed hyperbolic $3$-manifold $M$ obtained from the census manifold m306 by $5/1$ Dehn filling is commensurable with $\mathrm{SO}^+(q,\mathbb{Z})$ for $q = \langle 1,2,5,-10\rangle$, and for $C_{\epsilon}$ and $D$ as in Theorem \ref{Sec:ArEx:T1} we may take $C_{\epsilon} = 16$ (with $\epsilon =1$) and $D = 1600^{42}$.  Therefore by Corollary \ref{effective}, for this manifold the constant $K$ appearing in Equation \eqref{haha} is
\[  2^73^45 \cdot 16(1600)^{42} \cdot \frac{8\pi^22^{2.5}}{2^{2.5}\pi^3-3^4}\sinh^5\left(2(2\ln(\sqrt{6}+\sqrt{7})+\cosh^{-1}(\sqrt{3})+\ln p^{-1}(4G_{\mathrm{cat}})))\right), \]
where $G_{\mathrm{cat}}$ is Catalan's constant (the volume of this specific $M$ is $4G_\mathrm{cat}$). This is approximately $7\cdot 10^{150}$.\end{example}


\bibliographystyle{abbrv}
\bibliography{RF}

\begin{thebibliography}{10}

\bibitem{Agol_tameness}
I.~Agol.
\newblock Tameness of hyperbolic 3-manifolds.
\newblock Preprint. arXiv:0405568.

\bibitem{Agol_RFRS}
I.~Agol.
\newblock Criteria for virtual fibering.
\newblock {\em Journal of Topology}, 1(2):269--284, 2008.

\bibitem{Agol_vHaken}
I.~Agol.
\newblock The virtual {H}aken conjecture.
\newblock {\em Doc. Math.}, 18:1045--1087, 2013.
\newblock With an appendix by Agol, Daniel Groves, and Jason Manning.

\bibitem{ALR}
I.~Agol, D.~D. Long, and A.~W. Reid.
\newblock The {B}ianchi groups are separable on geometrically finite subgroups.
\newblock {\em Ann. of Math. (2)}, 153(3):599--621, 2001.

\bibitem{Apostol}
T.~M. Apostol.
\newblock {\em Introduction to analytic number theory}.
\newblock Springer-Verlag, New York-Heidelberg, 1976.
\newblock Undergraduate Texts in Mathematics.

\bibitem{borel}
A.~Borel.
\newblock Commensurability classes and volumes of hyperbolic {$3$}-manifolds.
\newblock {\em Ann. Scuola Norm. Sup. Pisa Cl. Sci. (4)}, 8(1):1--33, 1981.

\bibitem{BHC}
A.~Borel and Harish-Chandra.
\newblock Arithmetic subgroups of algebraic groups.
\newblock {\em Ann. of Math. (2)}, 75:485--535, 1962.

\bibitem{Magma}
W.~Bosma, J.~Cannon, and C.~Playoust.
\newblock The {M}agma algebra system. {I}. {T}he user language.
\newblock {\em J. Symbolic Comput.}, 24(3-4):235--265, 1997.
\newblock Computational algebra and number theory (London, 1993).

\bibitem{Bou}
K.~Bou-Rabee.
\newblock Quantifying residual finiteness.
\newblock {\em J. Algebra}, 323(3):729--737, 2010.

\bibitem{BRHP}
K.~Bou-Rabee, M.~F. Hagen, and P.~Patel.
\newblock Residual finiteness growths of virtually special groups.
\newblock {\em Math. Z.}, 279(1-2):297--310, 2015.

\bibitem{CG_tameness}
D.~Calegari and D.~Gabai.
\newblock Shrinkwrapping and the taming of hyperbolic 3-manifolds.
\newblock {\em Journal of the American Mathematical Society}, 19, 08 2004.

\bibitem{Canary_covering}
R.~D. Canary.
\newblock A covering theorem for hyperbolic 3-manifolds and its applications.
\newblock {\em Topology}, 35(3):751 -- 778, 1996.

\bibitem{Cassels}
J.~W.~S. Cassels.
\newblock Bounds for the least solutions of homogeneous quadratic equations.
\newblock {\em Proc. Cambridge Philos. Soc.}, 51:262--264, 1955.

\bibitem{ChinFried}
T.~Chinburg and E.~Friedman.
\newblock The smallest arithmetic hyperbolic three-orbifold.
\newblock {\em Invent. Math.}, 86(3):507--527, 1986.

\bibitem{Chu}
M.~Chu.
\newblock Special subgroups of {B}ianchi groups.
\newblock Preprint. ar{X}iv:1709.10503.

\bibitem{Cohn}
H.~Cohn.
\newblock {\em Advanced number theory}.
\newblock Dover Publications, Inc., New York, 1980.
\newblock Reprint of {{\i}t A second course in number theory}, 1962, Dover
  Books on Advanced Mathematics.

\bibitem{SnapPy}
M.~Culler, N.~M. Dunfield, M.~Goerner, and J.~R. Weeks.
\newblock Snap{P}y, a computer program for studying the geometry and topology
  of $3$-manifolds.
\newblock Available at \url{http://snappy.computop.org} (2/26/17).

\bibitem{Davis_Jan}
M.~W. Davis and T.~Januszkiewicz.
\newblock Right-angled artin groups are commensurable with right-angled coxeter
  groups.
\newblock {\em Journal of Pure and Applied Algebra}, 153(3):229 -- 235, 2000.

\bibitem{EpstePen}
D.~B.~A. Epstein and R.~C. Penner.
\newblock Euclidean decompositions of noncompact hyperbolic manifolds.
\newblock {\em J. Differential Geom.}, 27(1):67--80, 1988.

\bibitem{EvRaTsch}
B.~Everitt, J.~G. Ratcliffe, and S.~T. Tschantz.
\newblock Right-angled {C}oxeter polytopes, hyperbolic six-manifolds, and a
  problem of {S}iegel.
\newblock {\em Math. Ann.}, 354(3):871--905, 2012.

\bibitem{Gauss}
C.~F. Gauss.
\newblock {\em Disquisitiones arithmeticae}, volume~10 of {\em Colecci\'on
  Enrique P\'erez Arbel\'aez [Enrique P\'erez Arbel\'aez Collection]}.
\newblock Academia Colombiana de Ciencias Exactas, F\'\i sicas y Naturales,
  Bogot\'a, 1995.
\newblock Translated from the Latin by Hugo Barrantes Campos, Michael Josephy
  and \'Angel Ruiz Z\'u\~niga, With a preface by Ruiz Z\'u\~niga.

\bibitem{HagWi_special}
F.~Haglund and D.~T. Wise.
\newblock Special cube complexes.
\newblock {\em Geom. Funct. Anal.}, 17(5):1551--1620, 2008.

\bibitem{HeathBrown}
D.~R. Heath-Brown.
\newblock Zero-free regions for {D}irichlet {$L$}-functions, and the least
  prime in an arithmetic progression.
\newblock {\em Proc. London Math. Soc. (3)}, 64(2):265--338, 1992.

\bibitem{JKRT}
N.~W. Johnson, J.~G. Ratcliffe, R.~Kellerhals, and S.~T. Tschantz.
\newblock The size of a hyperbolic {C}oxeter simplex.
\newblock {\em Transform. Groups}, 4(4):329--353, 1999.

\bibitem{Kahn_Mark}
J.~Kahn and V.~Markovic.
\newblock Immersing almost geodesic surfaces in a closed hyperbolic three
  manifold.
\newblock {\em Annals of Mathematics}, 175, 10 2009.

\bibitem{LMO}
J.~C. Lagarias, H.~L. Montgomery, and A.~M. Odlyzko.
\newblock A bound for the least prime ideal in the {C}hebotarev density
  theorem.
\newblock {\em Invent. Math.}, 54(3):271--296, 1979.

\bibitem{Linowitz}
B.~Linowitz.
\newblock Selectivity in quaternion algebras.
\newblock {\em J. Number Theory}, 132(7):1425--1437, 2012.

\bibitem{Linowitz2}
B.~Linowitz.
\newblock Families of mutually isospectral {R}iemannian orbifolds.
\newblock {\em Bull. Lond. Math. Soc.}, 47(1):47--54, 2015.

\bibitem{LMM}
B.~Linowitz, D.~B. McReynolds, and N.~Miller.
\newblock Areas of totally geodesic surfaces of hyperbolic 3--manifolds.
\newblock Preprint. ar{X}iv:1707.03079.

\bibitem{MR}
C.~Maclachlan and A.~W. Reid.
\newblock {\em The arithmetic of hyperbolic 3-manifolds}, volume 219 of {\em
  Graduate Texts in Mathematics}.
\newblock Springer-Verlag, New York, 2003.

\bibitem{Malcev}
A.~Malcev.
\newblock On isomorphic matrix representations of infinite groups.
\newblock {\em Rec. Math. [Mat. Sbornik] N.S.}, 8 (50):405--422, 1940.

\bibitem{MThesis}
J.~Meyer.
\newblock On the totally geodesic commensurability spectrum of arithmetic
  locally symmetric spaces.
\newblock Ph.D.~thesis, University of Michigan, 2013.

\bibitem{Meyer}
J.~S. Meyer.
\newblock Totally geodesic spectra of arithmetic hyperbolic spaces.
\newblock {\em Trans. Amer. Math. Soc.}, 369(11):7549--7588, 2017.

\bibitem{Ono}
T.~Ono.
\newblock On algebraic groups and discontinuous groups.
\newblock {\em Nagoya Math. J.}, 27:279--322, 1966.

\bibitem{Patel_pre}
P.~Patel.
\newblock On the residual finiteness growths of particular hyperbolic manifold
  groups.
\newblock Preprint. ar{X}iv:1412.6835v1, December 2014.

\bibitem{Patel}
P.~Patel.
\newblock On the residual finiteness growths of particular hyperbolic manifold
  groups.
\newblock {\em Geom. Dedicata}, 185:87--103, 2016.

\bibitem{PotVin}
L.~Potyagailo and E.~Vinberg.
\newblock On right-angled reflection groups in hyperbolic spaces.
\newblock {\em Comment. Math. Helv.}, 80(1):63--73, 2005.

\bibitem{Ratcliffe}
J.~G. Ratcliffe.
\newblock {\em Foundations of hyperbolic manifolds}, volume 149 of {\em
  Graduate Texts in Mathematics}.
\newblock Springer, New York, second edition, 2006.

\bibitem{Scharlau}
W.~Scharlau.
\newblock {\em Quadratic and {H}ermitian forms}, volume 270 of {\em Grundlehren
  der Mathematischen Wissenschaften [Fundamental Principles of Mathematical
  Sciences]}.
\newblock Springer-Verlag, Berlin, 1985.

\bibitem{Scott}
P.~Scott.
\newblock Subgroups of surface groups are almost geometric.
\newblock {\em J. London Math. Soc. (2)}, 17(3):555--565, 1978.

\bibitem{Serre}
J.-P. Serre.
\newblock {\em A course in arithmetic}.
\newblock Springer-Verlag, New York-Heidelberg, 1973.
\newblock Translated from the French, Graduate Texts in Mathematics, No. 7.

\bibitem{Thurston}
W.~P. Thurston.
\newblock Three-dimensional manifolds, {K}leinian groups and hyperbolic
  geometry.
\newblock {\em Bull. Amer. Math. Soc. (N.S.)}, 6(3):357--381, 1982.

\bibitem{Weil}
A.~Weil.
\newblock {\em Basic number theory}.
\newblock Classics in Mathematics. Springer-Verlag, Berlin, 1995.
\newblock Reprint of the second (1973) edition.

\end{thebibliography}


\end{document}